\newtheorem{theorem}{Theorem}[section]
\newtheorem{lemma}[theorem]{Lemma}
\newtheorem{proposition}[theorem]{Proposition}
\newtheorem{corollary}[theorem]{Corollary}
\newtheorem*{theorem:Period}{Theorem~\ref{T:Period}}
\theoremstyle{definition}
\newtheorem{example}[theorem]{Example}
\theoremstyle{remark}
\newtheorem{remark}[theorem]{Remark}
\newcommand{\C}{\ensuremath \mathbb{C}}
\newcommand{\BB}{\mathbb{B}}
\newcommand{\F}{\ensuremath \mathbb{F}}
\newcommand{\TT}{\mathbb{T}}
\newcommand{\bA}{\mathbf{A}}
\newcommand{\bK}{\mathbf{K}}
\newcommand{\ba}{\mathbf{a}}
\newcommand{\bb}{\mathbf{b}}
\newcommand{\bc}{\mathbf{c}}
\newcommand{\bd}{\mathbf{d}}
\newcommand{\bu}{\mathbf{u}}
\newcommand{\bg}{\mathbf{g}}
\newcommand{\bh}{\mathbf{h}}
\newcommand{\bz}{\mathbf{z}}
\newcommand{\cE}{\mathcal{E}}
\newcommand{\cL}{\mathcal{L}}
\newcommand{\cO}{\mathcal{O}}
\newcommand{\cU}{\mathcal{U}}
\newcommand{\cM}{\mathcal{M}}
\newcommand{\isom}{\ensuremath \cong}
\newcommand{\inv}{\ensuremath ^{-1}}
\newcommand{\on}{\ensuremath ^{\otimes n}}
\DeclareMathOperator{\Mat}{Mat}
\DeclareMathOperator{\divisor}{div}
\DeclareMathOperator{\End}{End}
\DeclareMathOperator{\Exp}{Exp}
\DeclareMathOperator{\Log}{Log}
\DeclareMathOperator{\Fr}{Fr}
\DeclareMathOperator{\ord}{ord}
\DeclareMathOperator{\Res}{Res}
\DeclareMathOperator{\RES}{RES}
\DeclareMathOperator{\sgn}{sgn}
\DeclareMathOperator{\Span}{Span}
\DeclareMathOperator{\Spec}{Spec}
\newcommand{\twist}{^{(1)}}
\newcommand{\twistinv}{^{(-1)}}
\newcommand{\twisti}{^{(i)}}
\newcommand{\twistk}[1]{^{(#1)}}
\newcommand{\power}[2]{{#1 [[ #2 ]]}}
\newcommand{\tpi}{\widetilde{\pi}}
\newcommand{\tsgn}{\widetilde{\sgn}}
\def\XXint#1#2#3{{\setbox0=\hbox{$#1{#2#3}{\int}$ }
\vcenter{\hbox{$#2#3$ }}\kern-.6\wd0}}
\begin{document}

\title[Tensor Powers and Periods]{Tensor powers of rank 1 Drinfeld modules and Periods}

%    Information for first author
\author{Nathan Green}
\address{Department of Mathematics, Texas A{\&}M University, College Station,
TX 77843, USA}
\email{ngreen@math.tamu.edu}

\subjclass[2010]{}

\date{}

\begin{abstract}
We study tensor powers of rank 1 sign-normalized Drinfeld $\mathbf{A}$-modules, where $\mathbf{A}$ is the coordinate ring of an elliptic curve over a finite field.  Using the theory of $\mathbf{A}$-motives, we find explicit formulas for the $\mathbf{A}$-action of these modules.  Then, by developing the theory of vector-valued Anderson generating functions, we give formulas for the period lattice of the associated exponential function.
\end{abstract}

\keywords{Tensor powers of Drinfeld modules, shtuka functions, reciprocal sums,  periods, motives}
\date{\today}
\maketitle

\tableofcontents

\section{Introduction} \label{S:Intro}
The Carlitz module and its tensor powers are well understood.  We have explicit formulas for multiplication maps of both the Carlitz module and for its tensor powers (see \cite{Carlitz35} for the Carlitz module and \cite[\S 3]{PLogAlg} for tensor powers of the Carlitz module).  Further, we have a nice product formula for $\tpi$, the Carlitz period, and a formula for the bottom coordinate of the fundamental period associated with tensor powers of the Carlitz module (see \cite[\S 2.5]{AndThak90}).

As a generalization for the Carlitz module, Drinfeld introduced the notion of Drinfeld modules (see also \cite{Goss}, \cite{Hayes92} or \cite{Thakur} for a thorough account of Drinfeld modules).  Since their introduction, many researchers have worked to developed an explicit theory for Drinfeld modules which parallels that for the Carlitz module, notably Anderson in \cite{And94} and \cite{And96}, Thakur in \cite{Thakur92} and \cite{Thakur93}, Dummit and Hayes in \cite{DummitHayes94}, and Hayes in \cite{Hayes79}.  To discuss the results of the present paper, we first recall a few basic facts about rank 1 sign-normalized Drinfeld modules over rings $\bA$, where $\bA$ is the affine coordinate ring of an elliptic curve $E/\F_q$ (see \S \ref{S:Amotives} for a more thorough review of Drinfeld modules).  Define $\bA = \F_q[t,y]$, where $t$ and $y$ are related via a cubic Weierstrass equation for $E$.  Also define an isomorphic copy of $\bA$, which we denote $A = \F_q[\theta,\eta]$, where $\theta$ and $\eta$ satisfy the same cubic Weierstrass equation as $t$ and $y$.  Let $K$ be the fraction field of $A$, let $K_\infty$ be the completion of $K$ at the infinite place, let $\C_\infty$ be the completion of an algebraic closure of $K_\infty$.  Let $H$ be the Hilbert class field of $K$, which can be taken to be a subfield of $K_\infty$.  A rank 1 sign-normalized Drinfeld module is an $\bA$-module homomorphism
\[\rho:\bA\to L[\tau]\]
satisfying certain naturally defined conditions, where $L\subset \C_\infty$ is some algebraically closed field containing $H$ and $L[\tau]$ is the ring of twisted polynomials in the $q$th power Frobenius endomorphism~$\tau$ (see \S \ref{S:Amotives} for definitions).   Associated to this Drinfeld module there is a point $V\in E(H)$ called the Drinfeld divisor, satisfying the equation on $E$
\[V\twist - V + \Xi = \infty,\]
where $\Xi = (\theta,\eta)\in K(E)$ and $V^{(1)}$ is the image of $V$ under the $q$th power Frobenius isogengy.  We specify that $V$ be in the formal group of $E$ at the infinite place of $K$, so that $V$ is uniquely determined by the above equation.  We define the shtuka function $f\in H(t,y)$ associated to $\rho$ to have
\[\divisor(f) = (V\twist) - (V) + (\Xi) - (\infty),\]
and require that the sign of $f$ equals 1 so that $f$ is uniquely determined (see \S \ref{S:Background} for the definition of sign).

Generalizing the Carlitz module further, Anderson introduced the notion of $n$th tensor powers of Drinfeld modules in \cite{And86}, which provide $n$-dimensional analogues of (1-dimensional) Drinfeld modules.  Then, in the remarkable paper \cite{AndThak90}, Anderson and Thakur develop much of the explicit theory for the arithmetic of the $n$th tensor power of the Carlitz module, including the aforementioned formula for the bottom coordinate of the fundamental period of the exponential function.  More recently, in \cite{PLogAlg} Papanikolas uses hyper derivatives to give extremely explicit formulas for multiplication maps and the fundamental period of tensor powers of the Carlitz module, along with with remarkable log-algebraicity theorems.   Both Anderson and Thakur's and Papanikolas's techniques allow them to connect the logarithm function to function field zeta values.

The goal of the present paper is to lay the groundwork for connecting the logarithm function associated to tensor powers of rank 1 Drinfeld modules with zeta values.  This connection is achieved in a concurrent paper by the author (see \cite{GreenZeta17}), which was written separately due to length considerations.  This notion of using Drinfeld modules to study $L$-functions, zeta functions, and their special values over functions fields has been pursued vigorously in the last few years and has born much fruit (see \cite{AnglesNgoDacRibeiro16a}-\cite{AnglesSimon}, \cite{Goss13}, \cite{LutesThesis}, \cite{LP13}, \cite{Pellarin08}-\cite{Perkins14b}, \cite{Taelman12}).

The main focus of this paper is the study of tensor powers of rank 1 sign-normalized Drinfeld modules over the affine coordinate ring of an elliptic curve.  These modules provide a further generalization of the Carlitz module and are an example of Anderson $\bA$-modules.  An $n$-dimensional Anderson $\bA$-module is an $\bA$-module homomorphism
\[\rho:\bA \to \Mat_n(L)[\tau]\]
satisfying certain naturally defined conditions, where $\Mat_n(L)[\tau]$ is the ring of twisted polynomials in the $q$th power Frobenius endomorphism~$\tau$, which extends to matrices entry-wise (see \S \ref{S:Amodules} for the full definition of Anderson $\bA$-modules).  The main theorems of this paper give formulas for the $\bA$-action of tensor powers of rank 1 sign-normalized Drinfeld modules, as well as for the fundamental period of the exponential function associated to this module.  This generalizes both the work of Papanikolas and the author on Drinfeld modules in \cite{GP16} as well as that of Anderson and Thakur on tensor powers of the Carlitz module in \cite{AndThak90}.  One of the main new aspects of this work, which distinguishes it from that of Anderson and Thakur, is that we prove many of our results in a vector-valued setting.  In particular, we define and study vector-valued Anderson generating functions (see \eqref{Eudef}), and define new operators which act on these vector-valued functions (see \S \ref{S:Operators})

After setting out the notation and background in \S \ref{S:Background}, in \S \ref{S:Amotives} we begin by defining $\bA$-motives and dual $\bA$-motives, which are tensor powers of 1-dimensional motives.  We realize these $\bA$-motives and dual $\bA$-motives as spaces of functions
\[M = \Gamma (U,\cO_E(nV)),\quad N = \Gamma (U,\cO_E(-nV\twist)),\]
respectively, where $U = \Spec L[t,y]$ is the affine curve $(L \times_{\F_q} E) \setminus \{ \infty \}$.  The spaces $M$ and $N$ are generated as a free $L[\tau]$-module and a free $L[\sigma]$-module by the sets of naturally defined functions
\begin{equation}\label{basisfunctions}
\{g_1,\dots, g_n\},\quad \{h_1,\dots,h_n\},
\end{equation}
respectively, for $g_i,h_i\in L(t,y)$ (see \eqref{gidivisor} and \eqref{hidivisor} for specific definitions).  The functions $g_i$ and $h_i$ appear repeatedly throughout our paper, and one can think of them as a generalization of the shtuka function to the $n$-dimensional setting.

To ease notation throughout the paper, for a fixed dimension $n$, we define
\begin{equation}\label{Nidef}
N_i\in \Mat_n(\F_q)
\end{equation}
for an integer $i\geq 1$ to be the matrix with $1$'s along the $i$th super-diagonal and $0$'s elsewhere and define $N_i$ for $i\leq -1$ to be the matrix with $1$'s along the $i$th sub-diagonal and $0$'s elsewhere.  We also define $E_1$ to be the matrix with a single $1$ in the lower left corner and zeros elsewhere and in general define $E_i$ to be $N_{i-n}$.  We also define $N_{i}(\alpha_1,\dots,\alpha_{n-i})$ to be the matrix with the entries $\alpha_1,\alpha_2, \dots, \alpha_{n-i}$ along the $i$th super diagonal and similarly for $N_{i-n}(\alpha_1,\dots,\alpha_{n-i})$ and $E_i(\alpha_1,\dots, \alpha_i)$.

Using $M$ and $N$, in \S \ref{S:Amodules} we define an Anderson $\bA$-motive $\rho\on$, which is the $n$th tensor power of a (1-dimensional) rank 1 sign-normalized Drinfeld module, and analyze the structure of $\rho\on_t$ and $\rho\on_y$.  We find that
\begin{equation}\label{rhotdef1}
\rho_t\on = (\theta I + N_1 (a_1,\dots,a_{n-1}) + N_2) + (a_nE_1+E_2)\tau,
\end{equation}
where $a_i$ are naturally defined constants in $H$ (see \eqref{taction} and Corollary \eqref{C:aivalue}), and that $\rho_y\on$ is defined similarly (see \eqref{yaction}).  By way of comparison, recall that for the $n$th tensor power of the Carlitz module (see Example \ref{Carlitztensorpowers}), we can write
\[C_t\on = (\theta I + N_1) + E_1\tau.\]
We denote the exponential and logarithm functions associated to $\rho\on$ as
\[\Exp\on_\rho(\bz) = \sum_{i=0}^\infty Q_i \bz\twisti, \quad \Log\on_\rho(\bz) = \sum_{i=0}^\infty P_i \bz\twisti,\]
and denote the period lattice of $\Exp_\rho\on$ as $\Lambda_\rho\on$.

In \S \ref{S:Operators} we define an $\bA$-module of rigid analytic functions $\Omega_0$ which vanish under the operator $\tau - f^n$, where $\tau$ acts by twisting (see \eqref{twistdef} for a definition of twisting).  We then proceed to define an $n$-dimensional ``vector version" of the operator $\tau - f^n$ which we denote
\[G - E_1\tau \in \Mat_n(H(t,y))[\tau],\]
which acts on vectors of rigid analytic functions, and in Lemma \ref{L:GE1andOmega} we solidify the connection between these two operators.  These vector operators allow us in \S \ref{S:Period} to connect the fundamental period $\Pi_n$ of $\Exp_\rho\on$ with the vector space $\Omega_0$ and obtain formulas for $\Pi_n$.  To state the main theorem on periods, we begin by recalling the function 
\[
  \omega_\rho = \xi^{1/(q-1)} \prod_{i=0}^\infty \frac{\xi^{q^i}}{f^{(i)}},
\]
from \cite[\S 4]{GP16}, where $f^{(i)}$ is the $i$th twist of $f$ and $\xi = -(m + \beta/\alpha)$ (see \eqref{fdef} for the definition of $m$).  We also define vector valued Anderson generating functions,
\[E_{\bu}^{\otimes n}(t) =  \sum_{i=0}^\infty \Exp_\rho\on\left (d[\theta]^{-i-1} \bu\right ) t^i \in \TT^n,\]
where $\bu \in \C_\infty$ and $\TT$ is a Tate algebra (see \eqref{Tatealgs} for the definition of $\TT$), and prove several properties about them.  We relate the function $\omega_\rho^n$ to $E_{\bu}^{\otimes n}$ using the vector operator $G-E_1\tau$ from \S \ref{S:Operators}.
%This involves defining map $T:\TT[y]\to \TT[y]^n$ for $h\in \TT[y]$ by
%\[T(h) = (hg_1,\dots, hg_n)^\top,\]
%for $g_i$ from \eqref{basisfunctions}, and $\RES_\Xi:\TT[y]^n\to \C_\infty^n$ for $\bz = (z_1,\dots,z_n)^\top \in \TT[y]^n$ as
%\[\RES_{\Xi}(\bh)
%= 
Using these techniques, we get the following information about the period lattice.
%which we state here in the simpler case where $E/\F_q$ is given by the Weierstrass equation $E:y^2 = t^3+at+b$.
\begin{theorem:Period}
If we denote
\[\Pi_n = -\left (\begin{matrix}
\Res_{\Xi}(\omega_\rho^n g_1\lambda)  \\
\vdots  \\
\Res_{\Xi}(\omega_\rho^n g_n\lambda) \\
\end{matrix}\right ),\]
where $g_i$ are the functions from \eqref{basisfunctions} and $\lambda$ is the invariant differential on $E$, then the structure of the period lattice of $\Exp_\rho\on$ is given by
%\[(\omega_\rho^ng_1,\dots,\omega_\rho^ng_n)^\top = E_{d[\eta]\Pi_n} + yE_{\Pi_n}\]
\[\Lambda_\rho\on = \{d[a]\Pi_n\mid a\in \bA\},\]
where $d[a]$ is the constant term of $\rho_a\on$.  Further, if $\pi_\rho$ is a fundamental period of the (1-dimensional) Drinfeld exponential function, then the last coordinate of $\Pi_n \in \C_\infty^n$ is
\[\frac{g_1(\Xi)}{a_1a_2\dots a_{n-1}}\cdot\pi_\rho^n,\]
where the constants $a_i$ are the same as in \eqref{rhotdef1}.
\end{theorem:Period}

In the concurrent paper by the author \cite{GreenZeta17} we build upon the techniques of this paper to establish formulas for the coefficients of $\Exp_\rho\on$ and for $\Log_\rho\on$ and relate values of the logarithm function to special values of the zeta function
\[\zeta(n) = \sum_{\substack{a\in A \\ a\text{ monic}}} \frac{1}{a^n}.\]

\section{Background and Notation}\label{S:Background}
We require much of the same notation as \cite[\S 2]{GP16} and we use similar exposition in this section.  Let $p$ be a prime and $q=p^r$ for some integer $r>0$ and let $\F_q$ be the field with $q$ elements.  Define the elliptic curve $E$ over $\F_q$ with Weierstrass equation
\begin{equation}\label{ecequation}
E: y^2 + c_1 ty + c_3y = t^3 + c_2 t^2 + c_4t + c_6,\quad c_i\in \F_q,
\end{equation}
with the point at infinity designated as $\infty$.  Let $\bA = \F_q[t,y]$ be the affine coordinate ring of $E$, the functions on $E$ regular away from $\infty$, and let $\bK=\F_q(t,y)$ be its fraction field.  Let
\begin{equation}\label{invdiff}
\lambda = \frac{dt}{2y + c_1t + c_3}
\end{equation}
be the invariant differential on $E$.  Also define isomorphic copies of $\bA$ and $\bK$ with an independent set of variables $\theta$ and $\eta$, which also satisfy \eqref{ecequation}, which we label
\[A = \F_q[\theta,\eta],\quad \text{and}\quad K = \F_q(\theta,\eta).\]
Define the canonical isomorphisms
\begin{equation}\label{canoniso}
\iota:\bK\to K, \quad \chi:K\to \bK
\end{equation}
such that $\iota(t)=\theta$ and $\iota(y) = \eta$ and so on.  For convenience, for $x\in K$ we will sometimes refer to $\chi(x) = \overline x$.  Thus $\overline x$ means the element $x$ expressed with the variable $t$ and $y$ instead of with $\theta$ and $\eta$.  We remark that the maps $\iota$ and $\chi$ extend to finite algebraic extensions of $\bK$ and $K$ respectively.

Let $\ord_\infty$ be the valuation of $K$ at the infinite place, and let $\deg := -\ord_\infty$, both normalized so that
\[\deg(\theta) = 2, \quad \deg(\eta) = 3.\]
Define an absolute value on $K$ by setting $|g| = q^{\deg(g)}$ for $g\in K$.  Also define $\ord_\infty$, $\deg$ and $|\cdot|$ on $\bK$ similarly.  Let $K_\infty$ be the completion of $K$ at the infinite place, and let $\C_\infty$ be the completion of an algebraic closure of $K_\infty$.  Designate the point $\Xi= (\theta,\eta)\in E(K)$.

Extend the absolute value on $\C_\infty$ to a seminorm on $M = \langle m_{i,j} \rangle \in \Mat_{\ell\times m}(\C_\infty)$ as in \cite[\S 2.2]{PLogAlg}  by defining
\[|M| = \max_{i,j}(|m_{i,j}|).\]
Note for $c\in \C_\infty$ and $M,N \in \Mat_{\ell\times m}(\C_\infty)$ that
\[|cM| = |c|\cdot |M|,\quad |M+N|\leq |M|+|N|,\]
and for matrices $M\in \Mat_{k\times \ell}(\C_\infty)$ and $N\in \Mat_{\ell\times m}(\C_\infty)$ that
\[|MN|\leq |M|\cdot |N|,\]
but that the seminorm is not multiplicative in general.

In order to define a sign function, we first note that as an $\F_q$-vector space, $\bA$ has a basis $\bA = \Span_{\F_q}(t^i,t^jy),$ for $i,j\geq 0$ where each term has a unique degree.  Thus, when expressed in this basis, an element $a\in \bA$ has a leading term which allows us to define
\[\sgn:\bA\setminus \{0\} \to \F_q^\times,\]
by letting $\sgn(a)\in \F_q^\times$ be the coefficient of the leading term of $a\in \bA\setminus \{0\}$.  This sign function extends naturally to $\bK$.  Define a sign function analogously on $A$ and $K$, which we also call $\sgn$.  Then, for any field extension $L/\F_q$, the coordinate ring of $E$ over $L$ is $L[t,y] = L\otimes_{\F_q} \bA$, and using the same notion of leading term, we define a group homomorphism
\[\tsgn:L(t,y)^\times \to L^\times,\]
which extends the functin $\sgn$ on $\bK$.

Now, let $L/\F_q$ be an algebraically closed extension of fields.  Define $\tau:L\to L$ to be the $q$th power Frobenius map and define $L[\tau]$ as the ring of twisted polynomials in $\tau$, subject to the relation for $c\in L$
\[\tau c = c^q\tau.\]

Define the Frobenius twisting automorphism of $L[t,y]$ for $g = \sum c_{j,k} t^jy^k$ as
\begin{equation}\label{twistdef}
g\twist = \sum c_{j,k}^q t^jy^k,
\end{equation}
where $g\twisti$ means the $i$th iteration of twisting.  The twisting operation also extends naturally to matrices in $\Mat_{\ell\times m}(L(t,y))$ by twisting entry-wise.  We use this notion of twisting to define the ring $\Mat_n(L)[\tau]$ as the non-commutative ring of polynomials in $\tau$ subject to the relation $\tau M = M\twist \tau$ for $M\in \Mat_n(L)$.  In the setting of Anderson $\bA$-modules, we view $\Mat_n(L)[\tau]$ as a ring of operators acting on $L^n$ for $n\geq 1$ via twisting, i.e. for $\Delta = \sum M_i \tau^i$, with $M_i\in \Mat_n(L)$ and $\ba \in L^n$,
\begin{equation}\label{Deltaaction}
\Delta(\ba) = \sum M_i \ba\twisti.
\end{equation}
Further, for $X\in E(L)$, we define $X\twist = \Fr(X)$, where $\Fr:E\to E$ is the $q$th power Frobenius isogeny.  We extend twisting to divisors in the obvious way, noting that for $g\in L(t,y)$
\[\divisor(g\twist) = \divisor(g)\twist.\]

We define the Tate algebra for $c\in A$,
\begin{equation} \label{Tatealgs}
   \TT_c = \biggl\{ \sum_{i=0}^\infty b_i t^i \in \power{\C_\infty}{t} \biggm| \big\lvert c^i b_i \big\rvert \to 0 \biggr\},
\end{equation}
where $\TT_c$ is the set of power series which converge on the closed disk of radius $|c|$.  For convenience, we set $\TT:= \TT_1$.  Define the Gauss norm $\lVert \cdot \rVert_c$ for vectors of functions $\bh = \sum \bd_i t^i \in \TT_c^n$ for some fixed dimension $n>0$ with $\bd_i\in \C_\infty^n$ by setting
\[\lVert \bh \rVert_c = \max_i |c^i\bd_i|,\]
where $|\cdot|$ is the seminorm described above.  Extend this norm to $\TT_c[y]^n$ for $\bh_1,\bh_2\in \TT_c^n$ by setting $\lVert \bh_1+y\bh_2 \rVert_c = \max(\lVert \bh_1 \rVert_c ,\lVert \eta \bh_2 \rVert_c )$.  Note that each of these algebras are complete under their respective norms.  Using the definition given from \cite[Chs.~3--4]{FresnelvdPut}, we note that the two rings $\TT[y]$ and $\TT_\theta[y]$ are affinoid algebras corresponding to rigid analytic affinoid subspaces of $E/\C_\infty$.  Let $\cE$ be the rigid analytic variety associated to $E$ and let $\cU\subset \cE$ be the inverse image under $t$ of the closed disk of radius $|\theta|$ in $\C_\infty$ centered at 0.  Then observe that $\cU$ is the affinoid subvariety of $\cE$ associated to $\TT_\theta[y]$, and that Frobenius twisting extends to $\TT$ and $\TT[y]$ and their fraction fields.  As proved in \cite[Lem.~3.3.2]{P08}, $\TT$ has $\F_q[t]$ and $\TT[y]$ has $\bA$ as its fixed ring under twisting.

We extend the action of $\Mat_n(L)[\tau]$ on $L^n$ described in \eqref{Deltaaction} to an action of $\Mat_n(\TT[y])[\tau]$ on $\TT[y]^n$ in the natural way.

\section{Tensor Powers of A-motives}\label{S:Amotives}
We briefly review the theory of $\bA$-motives and dual $\bA$-motives corresponding to rank 1 sign-normalized Drinfeld-Hayes modules as set out in \cite[\S 3]{GP16}.  First note that we can pick a unique point $V$ in $E(H)$ whose coordinates have positive degree such that $V$ satisfies the equation on $E$
\begin{equation}\label{Vequation}
(1-\Fr)(V) = V-V\twist = \Xi,
\end{equation}
If we set $V = (\alpha,\beta)$, then $\deg(\alpha) = 2$ and $\deg(\beta) = 3$ and $\sgn(\alpha) = \sgn(\beta) = 1$.  Define $H$ to be the Hilbert class field of $K$, which equals $H = K(\alpha,\beta)$.  There is a unique function in $H(t,y)$, called the shtuka function, with $\tsgn(f) = 1$ and with divisor
\begin{equation}\label{fdivisor}
\divisor(f) = (V\twist) - (V) + (\Xi) - (\infty).
\end{equation}
We can write
\begin{equation} \label{fdef}
  f = \frac{\nu(t,y)}{\delta(t)} = \frac{y - \eta - m(t-\theta)}{t-\alpha} = \frac{y + \beta + c_1\alpha + c_3 - m(t-\alpha)}{t-\alpha},
\end{equation}
where $m$ is the slope between the collinear points $V\twist, -V$ and $\Xi$, and $\deg(m) = q$.  We see
\begin{gather} \label{nudiv}
\divisor(\nu) = (V\twist) + (-V) + (\Xi) - 3(\infty), \\
\label{deltadiv}
\divisor(\delta) = (V) + (-V) - 2(\infty).
\end{gather}
Let $L/K$ be an algebraically closed field, and let $U = \Spec L[t,y]$ be the affine curve $(L \times_{\F_q} E) \setminus \{ \infty \}$.

We let
\[
  M_0 = \Gamma(U, \cO_{E}(V))  = \bigcup_{i \geq 0} \cL((V) + i(\infty)),
\]
where $\cL((V) + i(\infty))$ is the $L$-vector space of functions $g$ on $E$ with $\divisor(g) \geq -(V) - i(\infty)$.  We make $M_0$ into a left $L[t,y,\tau]$-module by letting $\tau$ act by
\[
  \tau g = f g^{(1)}, \quad g \in M_0,
\]
and letting $L[t,y]$ act by left multiplication.  We find that $M_0$ is a projective $L[t,y]$-module of rank~$1$ as well as a free $L[\tau]$-module of rank~$1$ with basis~$\{ 1 \}$.  Define the dual $\bA$-motive
\begin{equation} \label{dualt}
  N_0 = \Gamma\bigl(U, \cO_E(-(V^{(1)}))\bigr) \subseteq L[t,y].
\end{equation}
If we let $\sigma = \tau^{-1}$, then we can define a left $L[t,y,\sigma]$-module structure on $N_0$ by setting
\[
  \sigma h = f h^{(-1)}.
\]
With this action $N_0$ is a dual $\bA$-motive in the sense of Anderson~\cite[\S 4]{HJ16}, and we note that $N_0$ is an ideal of $L[t,y]$ and that it is a free left $L[\sigma]$-module of rank~$1$ generated by $\delta^{(1)}$ (see \cite[\S 3]{GP16} for proofs of these facts).

A Drinfeld $\bA$-module over $L$ is an $\F_q$-algebra homomorphism
\[
  \rho : \bA \to L[\tau],
\]
such that for all $a \in \bA$,
\[
  \rho_a = \iota(a) + b_1 \tau + \dots + b_n \tau^n.
\]
The rank $r$ of $\rho$ is the unique integer such that $n = r \deg a$ for all $a$.  Thus, rank 1 sign-normalized means that we require $r=1$ and that $b_n = \sgn(a)$.

For the Drinfeld $\bA$-module $\rho$, denote the exponential function associated to $\rho$ as $\exp_\rho(z) = \sum_{i=0}^\infty \frac{z^{q^i}}{d_i}$ with $d_0=1$, and denote it's period lattice as $\Lambda_\rho$.  Theorem 4.6 from \cite{GP16} states that $\Lambda_\rho$ is a rank 1 free $A$-module and is generated by the fundamental period
\begin{equation}\label{1dimperiod}
  \pi_{\rho} = - \frac{\xi^{q/(q-1)}}{\theta^q - \alpha} \prod_{i=1}^{\infty}
  \left(\frac{1 - \dfrac{\theta}{\alpha^{q^i} \mathstrut }}{1 - \biggl( \dfrac{m}{m\theta - \eta}\biggr)^{q^{i \mathstrut}} \cdot \theta + \biggl( \dfrac{1}{m\theta - \eta} \biggr)^{q^{i\mathstrut}}\cdot \eta}\right),
\end{equation}
where $\xi = -(m + \beta/\alpha)$.

We now proceed to developing the theory for $n$-dimensional tensor powers of $\bA$-motives and dual $\bA$-motives.  This generalizes the theory for the $n$-dimensional $t$-motives for the Carlitz module (see \cite[\S 3.6]{PLogAlg}).  For a fixed dimension $n\geq 1$, we define the $n$-fold tensor power of $M_0$,
\[M_0\on = M_0\otimes_{L[t,y]} \dots \otimes_{L[t,y]}M_0,\]
and similarly for $N_0\on$.  We wish to analyze $M_0\on$ and $N_0\on$ and identify them as a spaces of functions over $U$.

\begin{proposition}\label{P:TensorProd}
For $n\geq 1$, we have the following $L[t,y]$-module isomporphisms
\[M_0\on \isom \Gamma (U,\cO_E(nV)) \quad \text{and}\quad N_0\on \isom \Gamma (U,\cO_E(-nV\twist)).\]
\end{proposition}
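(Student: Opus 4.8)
The plan is to realize all four modules appearing in the statement as fractional ideals of the ring $R := L[t,y] = \Gamma(U,\cO_U)$, and then to invoke the elementary fact that for an invertible fractional ideal the $n$-th tensor power is carried isomorphically onto the $n$-th power ideal by multiplication. Since $U$ is a smooth affine curve, $R$ is a Dedekind domain with, say, fraction field $F$, and every closed point $P$ of $U$ determines a maximal ideal $\fp_P\subseteq R$ with $R_{\fp_P}$ a discrete valuation ring, normalized so that $\ord_P$ is its valuation. All of $M_0$, $N_0$, $\Gamma(U,\cO_E(nV))$ and $\Gamma(U,\cO_E(-nV\twist))$ are naturally $R$-submodules of $F$, cut out by prescribed lower bounds on the $\ord_P$.

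Unwinding those bounds, $M_0$ is exactly the set of $g\in F$ with $\ord_V(g)\geq -1$ and $\ord_P(g)\geq 0$ for every other closed point $P$ of $U$, i.e. the fractional ideal $M_0=\fp_V\inv$; similarly $N_0=\fp_{V\twist}$ (a genuine ideal of $R$, as already noted), while $\Gamma(U,\cO_E(nV))=\fp_V^{-n}$ and $\Gamma(U,\cO_E(-nV\twist))=\fp_{V\twist}^{\,n}$ — here I use that in a Dedekind domain $\fp^{m}$ is precisely the set of $g\in F$ with $\ord_{\fp}(g)\geq m$ and $\ord_{\fq}(g)\geq 0$ for all other maximal ideals $\fq$. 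As recalled in the text (following \cite[\S 3]{GP16}), $M_0$ is projective of rank $1$ and $N_0$ is a nonzero ideal, so each is an invertible $R$-module and hence so are all of their tensor powers. I would then apply the standard fact that for an invertible fractional ideal $I$ the multiplication map
\[
  I\on \;=\; I\otimes_R\cdots\otimes_R I \longrightarrow F,\qquad a_1\otimes\cdots\otimes a_n\longmapsto a_1\cdots a_n,
\]
is an $R$-module isomorphism onto the $n$-th power ideal $I^{n}$. Taking $I=M_0=\fp_V\inv$ gives $M_0\on\isom\fp_V^{-n}=\Gamma(U,\cO_E(nV))$, and taking $I=N_0=\fp_{V\twist}$ gives $N_0\on\isom\fp_{V\twist}^{\,n}=\Gamma(U,\cO_E(-nV\twist))$; alternatively one may argue by induction on $n$, the inductive step being the same statement for a product of two (a priori distinct) invertible ideals.

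The only real content — and the step I expect to require the most care — is injectivity of the multiplication map; surjectivity onto $I^{n}$ is immediate, since by definition $I^{n}$ is the $R$-span of products $a_1\cdots a_n$. For injectivity one localizes at each maximal ideal $\mathfrak{m}$ of $R$, where $I$ becomes principal, say $I_{\mathfrak{m}}=\phi R_{\mathfrak{m}}$ with $\phi\in F^{\times}$; the localized map is $R_{\mathfrak{m}}\on\to\phi^{n}R_{\mathfrak{m}}$, $r_1\otimes\cdots\otimes r_n\mapsto (r_1\cdots r_n)\phi^{n}$, which is plainly bijective. Equivalently: the source is a rank-one torsion-free $R$-module, the map is nonzero with rank-one torsion-free image, so its kernel has rank $0$ and, lying inside a torsion-free module, must vanish. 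This is exactly where the projectivity of $M_0$ and the invertibility of $N_0$ are used. Everything else is a matter of matching up the definitions of the divisorial section spaces, and the argument can be summarized geometrically as the statement that $\Gamma(U,-)$ is monoidal on invertible sheaves, together with the identity $\cO_E(V)\on=\cO_E(nV)$ on $E$.
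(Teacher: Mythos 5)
Your argument is correct, and in fact the isomorphism itself is the same one the paper uses: the multiplication map $a_1\otimes\cdots\otimes a_n\mapsto a_1\cdots a_n$ on simple tensors. Where you diverge is in how bijectivity is justified. The paper checks via divisors that the product lands in $\Gamma(U,\cO_E(nV))$ and then invokes Proposition 5.2 of Hartshorne, i.e.\ the fact that on the affine curve $U$ the module--sheaf dictionary is an equivalence compatible with tensor products, so that $\Gamma(U,\cO_E(V))\on$ computes $\Gamma\bigl(U,\cO_E(V)\on\bigr)=\Gamma(U,\cO_E(nV))$. You instead stay inside the Dedekind domain $R=L[t,y]$, identify $M_0=\fp_V^{-1}$, $N_0=\fp_{V\twist}$, and the two target spaces as $\fp_V^{-n}$ and $\fp_{V\twist}^{\,n}$ (using that $L$ is algebraically closed, so $V$ and $V\twist$ are honest closed points of $U$, and that the fraction field of $R$ is the function field of $E_L$), and then prove by localization that multiplication $I\on\to I^{n}$ is an isomorphism for any invertible fractional ideal $I$. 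What your route buys is a self-contained argument that makes the only nontrivial point --- injectivity --- explicit, and that isolates precisely where the projectivity of $M_0$ and the invertibility of the ideal $N_0$ are used; the cost is re-proving, in this special case, the general monoidality statement that the paper simply cites. Both treatments handle $M_0$ and $N_0$ uniformly, and your proof is complete as written.
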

\begin{proof}
Define the map
\[\psi: M_0\otimes_{L[t,y]} \dots \otimes_{L[t,y]}M_0 \to \Gamma (U,\cO_E(nV))\]
on simple tensors for $a_i\in M_0$ as
\[a_1\otimes \dots \otimes a_n \mapsto a_1\cdots a_n.\]
Looking at divisors, one quickly sees that $a_1\cdots a_n$ is indeed in $\Gamma (U,\cO_E(nV))$ as desired.  Then it follows quickly from Proposition 5.2 of \cite{Hartshorne} that the map $\psi$ is an $L[t,y]$-module isomorphism.  The proof that $N_0\on \isom \Gamma (U,\cO_E(-nV\twist))$ follows similarly.
\end{proof}
From here on forward, we will denote
\begin{equation}\label{MNdef}
M := M_0\on = \Gamma (U,\cO_E(nV)),\quad N :=N_0\on= \Gamma (U,\cO_E(-nV\twist)).
\end{equation}
We turn $M$ into an $L[t,y,\tau]$-module and $N$ into an $L[t,y,\sigma]$-module by defining the action for $a\in M$ and $b \in N$ as
\begin{equation}\label{tauaction}
\tau a = f^na\twist\quad \text{and}\quad \sigma b = f^n b\twistinv.
\end{equation}

\begin{remark}
The $\tau$ action defined on $M$ in \eqref{tauaction} is the same as the diagonal action on $M_0\on$, namely for $a_i \in M_0$
\[\psi(\tau(a_1\otimes \dots \otimes a_n)) =\psi (\tau a_1\otimes \dots \otimes \tau a_n) = \psi(f a_1 \otimes \dots \otimes f a_n) = f^n \psi(a_1 \otimes \dots \otimes a_n).\]
Thus the map $\psi$ from Proposition \ref{P:TensorProd} is actually an $L[t,y,\tau]$-module isomorphism.
\end{remark}

For a fixed dimension $n \geq 2$, we define a set of functions which generate $M$ as a free $L[\tau]$-module and define a second set of functions which generate $N$ as a free $L[\sigma]$-module.  We remark that for the case of $n=1$, the present considerations do reduce to those detailed in \cite[\S 3]{GP16} for motives attached to rank 1 Drinfeld modules, but for ease of exposition we assume that $n\geq 2$.  Define a sequence of functions $g_i \in M$ for $1\leq i\leq n$ with $\tsgn(g_i)=1$ and with divisors
\begin{equation}\label{gidivisor}
\divisor(g_{j}) = -n(V) +  (n-j)(\infty) + (j-1)(\Xi) + ([j-1]V\twist + [n-(j-1)]V),
\end{equation}
and define functions $h_i \in N$ with $\tsgn(h_i)=1$ and with divisors
\begin{equation}\label{hidivisor}
\divisor(h_j) = n(V\twist) - (n+j)(\infty) + (j-1)(\Xi) + (-[n - (j - 1)]V\twist-[j-1]V).
\end{equation}
Note that the divisors in \eqref{gidivisor} and \eqref{hidivisor} are principal by \eqref{Vequation}, that the functions $g_i$ and $h_i$ are uniquely defined because of the $\tsgn$ condition and that $g_i,h_i \in H(t,y)$.
\begin{proposition}\label{P:TauBasis}
For $n\geq 2$, the set of functions $\{g_i\}_{i=1}^n$ generate $M$ as a free $L[\tau]$-module and the set of functions $\{h_i\}_{i=1}^n$ generate $N$ as a free $L[\sigma]$-module.
\end{proposition}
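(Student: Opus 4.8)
The plan is to show that the $g_i$ form a basis by a counting/filtration argument, reducing everything to the structure of $M$ as an $L[t,y]$-module together with the $\tau$-action $\tau a = f^n a^{(1)}$, and then obtaining the $h_i$ statement by an entirely parallel argument (or by a duality that swaps $V \leftrightarrow -V^{(1)}$, $\tau \leftrightarrow \sigma$, so I will only write out the first case in detail). First I would record that $M = \bigcup_{i\geq 0} \cL(nV + i\infty)$ is exhausted by the finite-dimensional spaces $M^{(\leq d)} := \cL(nV + d(\infty))$, whose $L$-dimension is computable by Riemann--Roch on the genus-$1$ curve $E$: since $\deg(nV + d\infty) = n + d$ and (for $d$ large, or even $d \geq 1$ since $n\geq 2$) this has no $H^1$, we get $\dim_L \cL(nV+d\infty) = n+d$. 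So the Hilbert function of $M$ with respect to the $\infty$-adic filtration grows by exactly $1$ each step once $d \geq 1-n$, i.e. immediately.

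Next I would locate the $g_j$ in this filtration. From the divisor formula \eqref{gidivisor}, $g_j$ has a pole of order exactly $n-j$ at $\infty$ (the coefficient of $(\infty)$ there is $n-j \geq 0$, and since the total divisor has degree $0$ and the other terms are effective of total degree $n$, the pole at $\infty$ is exactly $n-j$... wait, one must be careful: $g_j$ also has the pole divisor $-n(V)$, so as a section of $\cO_E(nV)$ it is "regular" and its only genuine pole away from $nV$ is the order-$(n-j)$ pole at $\infty$). Thus $g_j \in M^{(\leq n-j)} \setminus M^{(\leq n-j-1)}$; in particular $g_n, g_{n-1}, \dots, g_1$ have strictly increasing pole orders $0, 1, \dots, n-1$ at $\infty$, hence are $L$-linearly independent, and together they span $M^{(\leq n-1)}$ which has dimension $n$. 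Now I would run the $\tau$-filtration argument exactly as in \cite[\S 3]{GP16}: the operator $\tau$ raises the $\infty$-pole order, because $f$ has a pole of order $1$ at $\infty$ (from \eqref{fdivisor}), so $f^n$ contributes pole order $n$, and twisting does not change pole orders; concretely $\tau$ maps $M^{(\leq d)}$ into $M^{(\leq d + n)}$ and, comparing dimensions ($\dim M^{(\leq d+n)} - \dim M^{(\leq d)} = n$), one shows $M^{(\leq d+n)} = \tau M^{(\leq d)} \oplus M^{(\leq n-1)}$ for every $d \geq 0$, using that $\tau$ is injective (it is $f^n$ times the injective Frobenius twist) and that $\tau M^{(\leq d)}$ lands in pole orders $\geq n > n-1$ so meets $M^{(\leq n-1)}$ trivially. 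Iterating this decomposition gives $M = \bigoplus_{k\geq 0} \tau^k M^{(\leq n-1)} = \bigoplus_{k \geq 0} \bigoplus_{j=1}^n L \tau^k g_j$, which is precisely the statement that $\{g_1,\dots,g_n\}$ is a free $L[\tau]$-basis of $M$.

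For $N$: the dual $\bA$-motive $N = \bigcup_{i\geq 0}\cL(-nV^{(1)} + i\infty)$ with $\sigma b = f^n b^{(-1)}$, and the $h_j$ by \eqref{hidivisor} have $\infty$-pole orders $n+j$ (note the $-(n+j)(\infty)$ term) but regularized by the $n(V^{(1)})$ zero divisor; running the same filtration-by-pole-order argument with $\sigma$ in place of $\tau$ — here $\sigma$ again raises pole order by $n$ since $f$ has a simple pole at $\infty$ and $(-1)$-twisting preserves pole orders, and $\sigma$ is injective — shows $\{h_1,\dots,h_n\}$ freely generate $N$ over $L[\sigma]$. I would state this either as a genuine repeat of the computation or by invoking the symmetry of the construction.

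The main obstacle I anticipate is bookkeeping the divisor data: verifying from \eqref{gidivisor} and \eqref{hidivisor} that the relevant effective pieces $([j-1]V^{(1)} + [n-(j-1)]V)$ really are honest effective divisors of degree $n$ on $E$ (so that the pole order at $\infty$ is exactly $n-j$ and not more), and checking that $\tau$ (resp. $\sigma$) shifts the filtration by exactly $n$ with the stated direct-sum complement — in other words, that $\tau M^{(\leq n-1)} \cap M^{(\leq n-1)} = 0$ and the dimension count closes up exactly. Once the pole-order-at-$\infty$ grading is pinned down, the argument is the standard ``graded basis lifts to a $\tau$-basis'' lemma and everything else is routine; I would cite \cite[\S 3]{GP16} and \cite[\S 3.6]{PLogAlg} for the template and keep the write-up short.
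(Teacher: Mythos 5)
Your overall strategy---grade $M$ by behavior at $\infty$, count dimensions with Riemann--Roch, and use that $\tau$ shifts the grading by $n$---is the same one the paper uses, but the divisor bookkeeping you flagged as the main risk is in fact wrong as written, and the errors break the key steps. In \eqref{gidivisor} the coefficient of $(\infty)$ is $+(n-j)$, so $g_j$ has a \emph{zero} of order $n-j$ at $\infty$, not a pole; its only pole is the order-$n$ pole along $(V)$. Hence all the $g_j$ lie in $\cL(n(V))$, which is exactly the $n$-dimensional space they span (each $g_j$ spans the one-dimensional space $\cL(n(V)-(n-j)(\infty)-(j-1)(\Xi))$, and their distinct vanishing orders $0,\dots,n-1$ at $\infty$ give independence). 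They do not have $\infty$-pole orders $0,\dots,n-1$, and they do not span $M^{(\leq n-1)}=\cL(n(V)+(n-1)(\infty))$, which by Riemann--Roch has dimension $2n-1$, not $n$.

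The induction step then fails as stated. The proposed decomposition $M^{(\leq d+n)}=\tau M^{(\leq d)}\oplus M^{(\leq n-1)}$ is dimensionally impossible for $n\geq 2$: the right-hand side would have dimension $(n+d)+(2n-1)$, whereas $\dim M^{(\leq d+n)}=2n+d$. The supporting claim that $\tau M^{(\leq d)}$ consists of functions with $\infty$-pole order at least $n$ is also false: since $g_1$ vanishes to order $n-1$ at $\infty$ and $f^n$ has a pole of order $n$ there, $\tau g_1=f^n g_1\twist$ has only a simple pole at $\infty$. The correct bookkeeping is by $\deg=-\ord_\infty$: one has $\deg g_i=i-n$ and $\deg(\tau^j g_i)=(j-1)n+i$, so these degrees are pairwise distinct and exhaust every degree occurring in $M$ (for exhaustion one should also note that no nonzero element of $M$ can vanish at $\infty$ to order $\geq n$, since that would force $n(V)\sim n(\infty)$, i.e.\ $[n]V=\infty$, which is ruled out for the nonzero point $V$ of the formal group at $\infty$); downward induction on degree then gives spanning, and distinctness of degrees gives $L[\tau]$-freeness. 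That corrected argument is precisely the paper's proof. The same correction is needed for the $h_i$/$\sigma$ half: there your reading of \eqref{hidivisor} is right ($h_j$ does have a pole of order $n+j$ at $\infty$), but the filtration step must again be run degree by degree rather than via the faulty direct-sum decomposition.
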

\begin{proof}
First observe that by the definition of the action of $\tau$ from \eqref{tauaction} that the $L$-vector space generated by the functions $\tau ^j g_i$ for $1\leq i\leq n$ and $j\geq 0$ is contained in $M$.  Then observe that each of the functions $g_i$ lives in the 1-dimensional Riemann-Roch space
\[g_i \in \mathcal{L}(n(V) -  (n-i)(\infty) - (i-1)(\Xi)).\]
Further, by the Riemann-Roch theorem
\[\mathcal{L}(n(V)) = \bigcup_{j=1}^n \mathcal{L}(n(V) -  (n-j)(\infty) - (j-1)(\Xi)),\]
so that $\mathcal{L}(n(V))$ is equal to the $L$-span of the functions $g_i$.  Finally, observe that
\[\deg(\tau^j g_i) =\deg((ff\twist \dots f\twistk{j-1})^n g_i\twistk{j})= (j-1)n + i,\]
so that the degree of each $\tau^j g_i$ is unique and that these degrees includes each nonnegative integer, thus
\[M = \bigcup_{i=1}^\infty \mathcal{L}(n(V) +  i(\infty))\]
is equal to the $L$ span of the set $\{\tau^j g_i\}$ for $1\leq i\leq n$ and $j\geq 0$.  The proof for the $\sigma$-basis of the dual $\bA$-motive $N$ follows similarly, once we note that each $h_i$ belongs to a 1-dimensional Riemann-Roch space
\[h_i \in \mathcal{L}(-n(V\twist) + (n+j)(\infty) - (j-1)(\Xi)).\]
We leave the details of this case to the reader.
\end{proof}
When it is convenient, we will extend the definitions of the functions $g_i$ and $h_i$ for $i>n$ by writing $i=jn+k$, where $1\leq k\leq n$, and then denoting,
\begin{equation}\label{gihigher}
g_i := \tau^j(g_k) = (ff\twist \dots f\twistk{j-1})^n g_k\twistk{j}\quad \text{and}\quad h_i := \sigma^j(h_k) = (ff\twistinv \dots f\twistk{1-j})^n h_k\twistk{-j}.
\end{equation}

\begin{remark}
The $\bA$-motive $N$ is dual to the $\bA$-motive $M$ in a precise sense as outlined in \cite[Prop. 4.3]{HJ16}.  But, as we do not need this for the rest of the paper, we omit the details.  We do, however, record a lemma about the relationship between the functions $g_i$ and $h_i$ which we will need later.
\end{remark}
\begin{lemma}\label{L:basisduality}
We obtain the following identities of functions for $1\leq j\leq n-1$
\[g_1h_1\twistinv = t-t([n]V),\]
\[g_{j+1}h_{n-(j-1)} = f^n\cdot (t-t([j]V\twist+[n-j]V)).\]
\end{lemma}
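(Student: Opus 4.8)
The plan is to verify the two displayed identities by comparing divisors and signs, using the fact that on an elliptic curve a rational function is determined up to a scalar by its divisor, and that scalar is pinned down by the normalization $\tsgn(\cdot)=1$. For the first identity, I would compute $\divisor(g_1 h_1^{(-1)})$ using \eqref{gidivisor} for $g_1$ and the twist of \eqref{hidivisor} for $h_1$ (recalling $\divisor(h_1^{(-1)}) = \divisor(h_1)^{(-1)}$, and that twisting sends $(\Xi)\mapsto(\Xi^{(1)})$, $(V)\mapsto(V^{(1)})$, etc., while $(\infty)$ is fixed since $\infty$ is $\F_q$-rational). With $j=1$ the divisors of $g_1$ and $h_1$ simplify: $\divisor(g_1) = -n(V) + (n-1)(\infty) + ([n]V)$ and $\divisor(h_1) = n(V^{(1)}) - (n+1)(\infty) + (-[n]V^{(1)})$, so $\divisor(h_1^{(-1)}) = n(V) - (n+1)(\infty) + (-[n]V)$. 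Adding, all the $\pm n(V)$ cancel and we are left with $([n]V) + (-[n]V) - 2(\infty)$, which is exactly $\divisor\!\big(t - t([n]V)\big)$ (the function $t-t(P)$ has a double pole at $\infty$ and vanishes at $P$ and $-P$). Then I would check the sign: $\tsgn(g_1 h_1^{(-1)}) = \tsgn(g_1)\tsgn(h_1)^q = 1$ by the normalization, and $\tsgn(t - t([n]V)) = 1$ since $t$ has leading coefficient $1$; hence the two functions are equal, not merely equal up to scalar.

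For the second identity, I would run the same argument with the functions $g_{j+1}$ and $h_{n-(j-1)}$, i.e. substituting $j+1$ for the index in \eqref{gidivisor} and $n-(j-1) = n-j+1$ for the index in \eqref{hidivisor}. From \eqref{gidivisor}, $\divisor(g_{j+1}) = -n(V) + (n-j-1)(\infty) + j(\Xi) + ([j]V^{(1)} + [n-j]V)$. From \eqref{hidivisor} with index $n-j+1$, the quantity $j-1$ there becomes $n-j$, giving $\divisor(h_{n-j+1}) = n(V^{(1)}) - (2n-j+1)(\infty) + (n-j)(\Xi) + (-[j]V^{(1)} - [n-j]V)$. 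Summing these two divisors, the $\pm n(V)$ no longer cancel but combine with $n(V^{(1)})$, and the $(\Xi)$ terms add to $n(\Xi)$; comparing with $n\,\divisor(f) = n(V^{(1)}) - n(V) + n(\Xi) - n(\infty)$ from \eqref{fdivisor}, one sees that $\divisor(g_{j+1} h_{n-j+1}) - n\,\divisor(f)$ equals $([j]V^{(1)}+[n-j]V) + (-[j]V^{(1)}-[n-j]V) - 2(\infty)$, which is $\divisor\!\big(t - t([j]V^{(1)}+[n-j]V)\big)$. So $g_{j+1} h_{n-j+1} = c\cdot f^n\cdot(t - t([j]V^{(1)}+[n-j]V))$ for some scalar $c$, and $\tsgn$ forces $c=1$ since $\tsgn(f)=1$, $\tsgn(g_{j+1})=\tsgn(h_{n-j+1})=1$, and $\tsgn(t-t(P))=1$.

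The routine but slightly delicate part is the divisor bookkeeping: keeping track of how twisting acts on each point, handling the edge cases in the indices (e.g. confirming the $(\infty)$-coefficients come out consistently, and that $[n]V$, $[j]V^{(1)}+[n-j]V$ are not themselves $\infty$ or a $2$-torsion point that would collapse the $(P)+(-P)$ pair), and correctly reading off \eqref{hidivisor} after the index shift $j-1 \mapsto n-j$. The only genuine obstacle I anticipate is making sure the normalization bookkeeping for $\tsgn$ under twisting is right — that $\tsgn(h^{(-1)}) = \tsgn(h)^{1/q}$ in $\F_q^\times$, which is fine since $\F_q^\times$ has exponent dividing $q-1$ so $q$-th powering is a bijection and in fact $\tsgn$ is $\F_q$-valued hence fixed by twisting — so that all the sign constants really are $1$. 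Once the divisor computations are in hand, the identities follow immediately from the uniqueness of a function with given divisor and given sign.
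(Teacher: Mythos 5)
Your proposal is correct and is essentially the paper's own argument: both identities are obtained by comparing divisors computed from \eqref{gidivisor}, \eqref{hidivisor} and \eqref{fdivisor}, and then pinning the constant via $\tsgn(g_i)=\tsgn(h_i)=\tsgn(f)=\tsgn(t-t(P))=1$. The only difference is that you carry out the divisor bookkeeping explicitly where the paper calls it trivial, and your passing slip writing $\tsgn(h_1)^q$ instead of $\tsgn(h_1^{(-1)})=\tsgn(h_1)^{1/q}$ is harmless since the sign is $1$.
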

\begin{proof}
The first identity is proved trivially, simply by comparing divisors from \eqref{gidivisor} and \eqref{hidivisor}, and noting that
\[\tsgn(g_{1}) = \tsgn(h_{1}) = \tsgn(t) = 1.\]
The second follows similarly, noting that for $1\leq j\leq n-1$
\[\divisor(g_{j+1}h_{n-(j-1)}) = \divisor(f^n\cdot (t-t([j]V\twist+[n-j]V))),\]
and thus the two sides are equal up to a multiplicative constant.  Then, since
\[\tsgn(g_{j+1}h_{n-(j-1)}) = \tsgn(f^n\cdot (t-t([j]V\twist+[n-j]V))) = 1,\]
the equality of functions follows.
\end{proof}

\begin{remark}
The $L[t,y,\tau]$-module $N$ and the $L[t,y,\sigma]$-module $M$ with the actions described in \eqref{tauaction} is an $\bA$-motive and a dual $\bA$-motive, respectively, in the precise sense described by Anderson (see \cite[\S 4]{HJ16}).  Because we do not require this fact going forward in the present paper, we omit the details.
\end{remark}

\section{Anderson A-modules}\label{S:Amodules}
In this section we show how to construct an Anderson $\bA$-module from the $\bA$-motive $M$ of the previous section.  An $n$-dimensional Anderson $\bA$-module is a map $\rho:\bA\to \Mat_n(L)[\tau]$, such that for $a\in \bA$
\[\rho_a = d[a] + A_1 \tau + \dots + A_m\tau^m,\]
where $d[a] = \iota(a)I + N$ for some nilpotent matrix $N\in \Mat_n(L)$, and we remark that $d:\bA\to \Mat_n(L)$ is a ring homomorphism.  The map $\rho\on$ describes an action of $\bA$ on the underlying space $L^n$ in the sense defined in \eqref{Deltaaction}, allowing us to view $L^n$ as an $\bA$-module.  Anderson $\bA$-modules are a generalization of the $t$-modules introduced by Anderson in \cite{And86}; they are studied thoroughly in \cite[\S 5]{HJ16}.

\begin{example}[Tensor Powers of the Carlitz Module]\label{Carlitztensorpowers}
For $\bA = \F_q[t]$, define an $n$-dimensional Anderson $\bA$-module $C\on:\F_q[t]\to \Mat_n(\F_q[\theta])[\tau]$ (called in this situation a $t$-module, with the normalization $\deg(t)=1$) by setting
\[C\on_t = (\theta I + N_1) + E_1\tau.\]
Thus, for $\bz\in L^n$,
\[C\on_t(\bz) = (\theta I + N_1)\bz + E_1 \bz\twist,\]
and we extend $C\on$ to all of $\bA$ by setting $C_{t^m} = C_t^m$ and using $\F_q$-linearity.  The map $C\on$ is an Anderson $\bA$-module and is called the $n$th tensor power of the Carlitz module.
\end{example}
Work by Anderson in \cite[Thm. 3]{And86} for the $\bA = \F_q[t]$ case, then later by B\"ockle and Hartle in \cite[\S 8.6]{BH07} for the more general rings $\bA$, shows that associated to every Anderson $\bA$-module, there is a unique, $\F_q$-linear power series, which we label
\[
\Exp _\rho(\bz) = \sum_{i=0}^\infty Q_i \bz\twisti \in \power{\Mat_n(\C_\infty)}{\bz},
\]
defined so that $Q_0 = I$ and that for all $a\in \bA$ and $\bz\in \C_\infty^n$
\begin{equation}\label{expfunctionalequation}
\Exp _\rho(d[a]\bz) = \rho_a(\Exp _\rho(\bz)).
\end{equation}
We call $\Exp _\rho$ the exponential function associated to $\rho $, and note that it is entire on $\C_\infty^n$.  We also define the logarithm function associated to $\bA$ to be the formal inverse of $\Exp _\rho$.  We label its coefficients
\[
\Log _\rho(\bz) = \sum_{i=0}^\infty P_i \bz\twisti \in \power{\Mat_n(\C_\infty)}{\bz},
\]
and note that $\Log _\rho$ also satisfies a functional equation for each $a\in \bA$
\begin{equation}\label{logfunctionalequation}
\Log _\rho(\rho _a(\bz)) = d[a]\Log _\rho(\bz).
\end{equation}
The function $\Log _\rho$ has a finite radius of convergence in $\C_\infty^n$, which we denote $r_L$.

Given the $\bA$-motive $M$ and the dual $\bA$-motive $N$ defined in \eqref{P:TensorProd}, we now describe how to use these motives to define an Anderson $\bA$-module.    This method generalizes a technique of Thakur \cite[0.3.5]{Thakur93} for Drinfeld modules, and also has roots in unpublished work of Anderson (see \cite[\S 5.2]{HJ16}).  We also refer the reader to \cite{BP16} for a thorough account of the functoriality of this process in the case of $t$-modules.  We begin by defining the $t$- and $y$-action of the $\bA$-module, from which the rest of the action of $A$ can be defined.  These actions are defined in terms of constants coming from the functions $g_i$ and $h_i$ from \eqref{gidivisor}.

\begin{proposition}\label{P:definingequations}
There exist constants $a_i,b_i,y_i,z_i\in H$ such that we can write for $1\leq i\leq n$
\begin{align*}
tg_i &= \theta g_i + a_ig_{i+1} + g_{i+2},\\
y g_i & = \eta g_i + y_i g_{i+1} + z_i g_{i+2} + g_{i+3},\\
th_i &= \theta h_i + b_ih_{i+1} + h_{i+2},
\end{align*}
where we recall the definitions of $g_i$ and $h_i$ for $i>n$ from \eqref{gihigher}.
\begin{proof}
Note that $tg_i\in M$, and hence we can write
\[tg_i = c_1g_1 + c_2g_2 + \dots + c_mg_m,\]
for $c_i\in \C_\infty$.  Examining the order of vanishing at $\infty$ of $g_j$ from \eqref{gidivisor} and recalling that $t$ has a pole of order $2$ at $\infty$, we see that $c_j=0$ for $j<i$ and $j>i+2$.  So
\[tg_i = c_ig_i + c_{i+1}g_{i+1} + c_{i+2}g_{i+2}.\]
Then, noting that $\tsgn(g_i) = \tsgn(t) = 1$ and evaluating both sides at $\Xi$ shows  that $c_{i+2} = 1$ and that $c_i = \theta$, respectively.  Further, all the functions $g_i$ are in $H(t,y)$, and so the constants $c_i$ are as well, which finishes the proof of the first equation.  The proofs of the other two equations follow similarly; we leave the details to the reader.
\end{proof}
\end{proposition}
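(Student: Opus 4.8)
The plan is to establish all three identities by one uniform mechanism. Fix $1\le i\le n$. Since $tg_i,\,yg_i\in M$ and $th_i\in N$, and since Proposition~\ref{P:TauBasis} together with the convention \eqref{gihigher} makes $\{g_m\}_{m\ge1}$ and $\{h_m\}_{m\ge1}$ into $L$-bases of $M$ and $N$, each left-hand side is a finite $L$-linear combination of basis functions; the whole task is to cut that combination down to the stated terms and then to identify the two extreme coefficients.

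First I would record the orders that drive the argument. Reading off \eqref{gidivisor} and extending by \eqref{gihigher} one gets $\deg(g_m)=m-n$ for every $m\ge1$, and $\ord_\Xi(g_m)=m-1$ for $1\le m\le n$ while $\ord_\Xi(g_m)\ge n$ for $m>n$; the last assertion is a short divisor computation from \eqref{fdivisor}, using that $\Xi=(\theta,\eta)$ is not fixed by Frobenius, so that $f\twistk{j}$ and $g_k\twistk{j}$ are regular and nonvanishing at $\Xi$ for $j\ge1$. The same computations give $\deg(h_m)=m+n$ and $\ord_\Xi(h_m)=m-1$ for $1\le m\le n$, $\ge n$ for $m>n$. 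Because $t$ and $y$ have poles of orders $2$ and $3$ at $\infty$ and are regular at $\Xi$ with $t(\Xi)=\theta$ and $y(\Xi)=\eta$, it follows that $tg_i$ lies in the Riemann--Roch space $\mathcal{L}\bigl(n(V)+(i+2-n)(\infty)-(i-1)(\Xi)\bigr)$, which has dimension $3$ and, by the order computations, is spanned precisely by $g_i,g_{i+1},g_{i+2}$ --- this remains correct when $i\in\{n-1,n\}$, where $g_{i+1},g_{i+2}$ are the twisted functions of \eqref{gihigher}. Hence $tg_i=c_ig_i+a_ig_{i+1}+c_{i+2}g_{i+2}$ with all coefficients in $L$.

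Next I pin the outer coefficients. Since $\deg(g_{i+2})$ strictly exceeds $\deg(g_i)$ and $\deg(g_{i+1})$, comparing leading terms at $\infty$ gives $c_{i+2}=\tsgn(tg_i)=\tsgn(t)\tsgn(g_i)=1$. For the other end, $(t-\theta)g_i=(c_i-\theta)g_i+a_ig_{i+1}+c_{i+2}g_{i+2}$ vanishes at $\Xi$ to order $\ge\ord_\Xi(g_i)+1=i$ because $(t-\theta)(\Xi)=0$, whereas among these three functions $g_i$ alone attains the minimal $\Xi$-order $i-1$; so if $c_i\ne\theta$ the right-hand side would vanish at $\Xi$ to order exactly $i-1$, a contradiction, forcing $c_i=\theta$. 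The second identity, $yg_i=\eta g_i+y_ig_{i+1}+z_ig_{i+2}+g_{i+3}$, comes out the same way from $yg_i\in\mathcal{L}\bigl(n(V)+(i+3-n)(\infty)-(i-1)(\Xi)\bigr)$, a $4$-dimensional space spanned by $g_i,\dots,g_{i+3}$, with $\tsgn$ forcing the top coefficient to be $1$ and the vanishing of $y-\eta$ at $\Xi$ forcing the $g_i$-coefficient to be $\eta$; the two middle coefficients are simply named $y_i$ and $z_i$. The third identity, $th_i=\theta h_i+b_ih_{i+1}+h_{i+2}$, is the identical computation performed in $N$ with $\sigma$, $\{h_m\}$ and $\deg(h_m)=m+n$ replacing $\tau$, $\{g_m\}$ and $\deg(g_m)=m-n$. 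Finally the constants $a_i,b_i,y_i,z_i$ lie in $H$ because the functions $g_m,h_m$ and the left-hand sides all lie in $H(t,y)$, so the coordinates of an $H$-rational function relative to an $L$-basis of $H$-rational functions are themselves in $H$.

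The one step I expect to need genuine care is the boundary range $i\in\{n-1,n\}$ (and $i\in\{n-2,n-1,n\}$ for the $y$-identity), where $g_{i+1},g_{i+2},g_{i+3}$ are given by the twist rule \eqref{gihigher} rather than by \eqref{gidivisor}: one must check that their degrees still continue the arithmetic progression $\deg(g_m)=m-n$ and that their orders at $\Xi$ remain $\ge n$, so that the same Riemann--Roch space is still spanned by exactly those consecutive functions and both the upper cutoff and the extreme-coefficient computations go through unchanged. That check is again just the divisor of $f$ from \eqref{fdivisor} together with the non-Frobenius-fixedness of $\Xi$; once it is in place the three identities really are uniform in $i$.
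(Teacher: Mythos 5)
Your proposal is correct and follows essentially the same route as the paper: expand $tg_i$ (resp.\ $yg_i$, $th_i$) in the basis, use pole orders at $\infty$ to cut the expansion down to the stated consecutive terms, use $\tsgn$ to get the top coefficient $1$, and use behavior at $\Xi$ to get the coefficient $\theta$ (resp.\ $\eta$), with $H$-rationality of the $g_i,h_i$ giving the coefficients in $H$. Your order-of-vanishing argument at $\Xi$ and the explicit check of the boundary indices $i\in\{n-1,n\}$ via \eqref{gihigher} are just more careful renderings of the paper's ``evaluate at $\Xi$'' step, not a different method.
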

Given the relationship between the basis elements $g_i$ and $h_j$ described in Lemma \ref{L:basisduality}, we also expect the coefficients $a_i$ and $b_j$ to be related.
\begin{proposition}\label{P:coefficientidentities}
For the coefficients defined in Proposition \ref{P:definingequations}, for $j\leq n-1$,
\[a_j = b_{n-j}\quad \text{and} \quad a_n = b_n^q.\]
\end{proposition}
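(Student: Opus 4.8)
The plan is to compare the two defining relations
\[
tg_i = \theta g_i + a_i g_{i+1} + g_{i+2}, \qquad
th_i = \theta h_i + b_i h_{i+1} + h_{i+2},
\]
by multiplying the first by a suitable $h_k$, using the product identities from Lemma~\ref{L:basisduality} to turn products $g_\ell h_k$ into explicit functions of $t$, and then reading off the coefficients. Concretely, for $1\le j\le n-1$ I would start from $t g_{j+1} = \theta g_{j+1} + a_{j+1} g_{j+2} + g_{j+3}$ and multiply through by $h_{n-(j-1)}$; by Lemma~\ref{L:basisduality} the product $g_{j+1}h_{n-(j-1)}$ equals $f^n\cdot(t - t([j]V^{(1)}+[n-j]V))$, and the neighboring products $g_{j+2}h_{n-(j-1)}$ and $g_{j+3}h_{n-(j-1)}$ can be matched (after re-indexing) to $g_{j'+1}h_{n-(j'-1)}$ type products for $j'=j+1,j+2$, again explicit functions of $t$ times $f^n$. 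This yields a polynomial identity in $t$ whose coefficients force a linear recursion relating the $a$'s; running the analogous computation with the $h$-relation multiplied by the appropriate $g$ gives the mirror recursion for the $b$'s, and comparing the two recursions (which have the same shape because the geometric data $[j]V^{(1)}+[n-j]V$ is symmetric under $j\leftrightarrow n-j$ combined with a Frobenius twist) gives $a_j = b_{n-j}$.

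For the boundary case $a_n = b_n^q$ I would instead use the first identity from Lemma~\ref{L:basisduality}, namely $g_1 h_1^{(-1)} = t - t([n]V)$, together with the definitions $g_{n+1} = \tau(g_1) = f^n g_1^{(1)}$ and $g_{n+2} = \tau(g_2) = f^n g_2^{(1)}$ extending \eqref{gihigher}, and similarly for $h$. Writing out $t g_n = \theta g_n + a_n g_{n+1} + g_{n+2} = \theta g_n + a_n f^n g_1^{(1)} + f^n g_2^{(1)}$ and the twisted version of $t h_1 = \theta h_1 + b_1 h_2 + h_3$, one sees that the twist $h_1^{(1)}$ (equivalently $\sigma^{-1}$ applied) is what pairs with $g_n$: multiplying $t h_1 = \theta h_1 + b_n h_{n+1} + \cdots$ appropriately and twisting, the coefficient $b_n$ gets raised to the $q$th power, matching $a_n$. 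I would track the twisting carefully here, since the wrap-around index $n+1 \equiv 1$ is exactly where the $\tau$-action $g_{n+i} = f^n g_i^{(1)}$ (resp.\ $\sigma$-action on the $h$'s) introduces the Frobenius.

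An alternative, perhaps cleaner route is to avoid Lemma~\ref{L:basisduality} entirely and argue directly from divisors: both $a_j$ and $b_{n-j}$ admit closed-form expressions as values of explicit functions at $\Xi$ (or at the relevant points $[j]V^{(1)}+[n-j]V$), obtained by evaluating the defining relations of Proposition~\ref{P:definingequations} at a well-chosen point on $E$ where two of the three terms vanish by the divisor conditions \eqref{gidivisor}, \eqref{hidivisor}. Comparing these two explicit expressions and invoking $\sgn(\alpha)=\sgn(\beta)=1$ to fix any ambiguous constant would give the identities. I expect the main obstacle to be bookkeeping of the index shifts and the twists near the wrap-around $i = n$: the relations in Proposition~\ref{P:definingequations} are stated for $1\le i\le n$ with the convention \eqref{gihigher} for $i>n$, and one must be scrupulous about which of $g_{n+1}, g_{n+2}, g_{n+3}$ carry a factor of $f^n$ and a Frobenius twist, as this is precisely what produces the $q$th power in $a_n = b_n^q$ rather than a plain equality. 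The rest is a routine comparison of divisors and leading signs.
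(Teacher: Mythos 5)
Your overall strategy---playing the $g$-relations of Proposition \ref{P:definingequations} against the $h$-relations via Lemma \ref{L:basisduality}---is the same one the paper uses, but your primary route has a concrete gap. Lemma \ref{L:basisduality} only evaluates the aligned products $g_1h_1^{(-1)}$ and $g_{j+1}h_{n-(j-1)}$. Once you multiply $tg_{j+1}=\theta g_{j+1}+a_{j+1}g_{j+2}+g_{j+3}$ by $h_{n-(j-1)}$, the neighboring products $g_{j+2}h_{n-(j-1)}$ and $g_{j+3}h_{n-(j-1)}$ are \emph{not} of the form $g_{j'+1}h_{n-(j'-1)}$ for any $j'$: your proposed $j'=j+1,\,j+2$ would pair $g_{j+2}$ and $g_{j+3}$ with $h_{n-j}$ and $h_{n-j-1}$, which are different functions. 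Moreover these products are not ``explicit functions of $t$ times $f^n$'': a divisor computation shows $g_{j+2}h_{n-(j-1)}$ equals $f^n$ times a linear form $y-\eta-m(t-\theta)$, which genuinely involves $y$, and $g_{j+3}h_{n-(j-1)}$ is worse. So the promised polynomial identity in $t$, and the recursions you want to read off from it, do not materialize as described. The paper divides rather than multiplies: dividing the $g_j$-relation by $g_{j+2}$ and the $h_{n-j}$-relation by $h_{n-j+2}$ and subtracting gives $0=(\theta-t)\bigl(g_j/g_{j+2}-h_{n-j}/h_{n-j+2}\bigr)+a_jg_{j+1}/g_{j+2}-b_{n-j}h_{n-j+1}/h_{n-j+2}$; only the aligned products $g_jh_{n-j+2}$ and $g_{j+2}h_{n-j}$ are then needed to rewrite $h_{n-j}/h_{n-j+2}$ as an explicit rational function of $t$ times $g_j/g_{j+2}$, and a comparison of degrees at $\infty$ and of signs (both $g_{j+1}/g_{j+2}$ and $h_{n-j+1}/h_{n-j+2}$ have degree $-1$ and sign $1$) forces $a_j=b_{n-j}$.

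Your alternative sketch is viable and is essentially the mechanism of Corollary \ref{C:aivalue}: dividing the $g_j$-relation by $g_{j+1}$ and the $h_{n-j}$-relation by $h_{n-j+1}$ and evaluating at $-\Xi$, where the $(t-\theta)$-terms vanish and the remaining quotients are regular, gives $a_j=-(g_{j+2}/g_{j+1})(-\Xi)$ and $b_{n-j}=-(h_{n-j+2}/h_{n-j+1})(-\Xi)$; both quotients have denominator $t-t([j]V^{(1)}+[n-j]V)$ and numerator of the form $y-\eta-m(t-\theta)$, and every such numerator takes the value $-(2\eta+c_1\theta+c_3)$ at $-\Xi$ independently of the slope $m$, so the two values coincide. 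But as written this is a plan, not a proof, and your treatment of $a_n=b_n^q$ is only an intention and partly garbled: the relevant $h$-side relation is $th_n=\theta h_n+b_n\sigma(h_1)+\sigma(h_2)$ with $\sigma(h_k)=f^nh_k^{(-1)}$, not a twist of the $h_1$-relation, and the exponent $q$ comes from converting the $\sigma$-wrap-around on the $h$-side into the $\tau$-wrap-around $g_{n+k}=f^ng_k^{(1)}$ on the $g$-side, a computation you gesture at but do not carry out.
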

\begin{proof}
From Proposition \ref{P:definingequations} we calculate that
\begin{equation}\label{mixedequality}
0 = (\theta - t)\left (\frac{g_j}{g_{j+2}} - \frac{h_{n-j}}{h_{n-j+2}}\right ) + a_j\frac{g_{j+1}}{g_{j+2}} - b_{n-j} \frac{h_{n-j+1}}{h_{n-j+2}}.
\end{equation}
Then using Lemma \ref{L:basisduality} yields the equality of functions
\[\frac{h_{n-j}}{h_{n-j+2}} = \frac{t - t\left ([j+1]V\twist + [n - (j+1)]V\right )}{t - t\left ([j-1]V\twist + [n - (j-1)]V\right )}\cdot \frac{g_j}{g_{j+2}},\]
and so \eqref{mixedequality} becomes
\[(\theta - t)\left (\frac{g_j}{g_{j+2}}\right )\left (1 - \frac{t - t\left ([j+1]V\twist + [n - (j+1)]V\right )}{t - t\left ([j-1]V\twist + [n - (j-1)]V\right )}\right ) = -a_j\frac{g_{j+1}}{g_{j+2}} + b_{n-j} \frac{h_{n-j+1}}{h_{n-j+2}}.\]
From \eqref{gidivisor} and \eqref{hidivisor} we quickly see that
\[\deg\left ((\theta - t)\left (\frac{g_j}{g_{j+2}}\right )\left (1 - \frac{t - t\left ([j+1]V\twist + [n - (j+1)]V\right )}{t - t\left ([j-1]V\twist + [n - (j-1)]V\right )}\right )\right ) = 0,\]
whereas
\[\deg\left (a_j\frac{g_{j+1}}{g_{j+2}}\right ) = \deg\left (b_{n-j} \frac{h_{n-j+1}}{h_{n-j+2}}\right ) = -1.\]
Then, since $\tsgn(g_i) = \tsgn(h_i) = 1$, in order for the degree on the left hand side to match the degree on the right hand side, we must have that $a_j = b_{n-j}$ for $j\leq n-1$.  Similar analysis of the equations from Proposition \ref{P:definingequations} shows that $a_n = b_n^q$; we leave the details to the reader.
\end{proof}

We begin defining the Anderson $\bA$-module associated to $M$, which we call the $n$th tensor power of the Drinfeld module $\rho$ associated to $M_0$, by defining
\begin{equation}\label{taction}
\rho\on_t := d[\theta] + E_\theta \tau :=
\left (\begin{matrix}
\theta & a_1 & 1  & 0 & \hdots & 0 & 0 & 0 \\
0 & \theta & a_2  & 1 & \hdots & 0 & 0 & 0 \\
0 & 0 & \theta  & a_3 & \hdots & 0 & 0 & 0 \\
\vdots & \vdots &  \vdots &\vdots & \ddots &  \vdots & \vdots & \vdots \\
0 & 0 & 0  & 0 & \hdots & \theta & a_{n-2} & 1\\
0 & 0 & 0  & 0 & \hdots & 0 & \theta & a_{n-1}\\
0 & 0 & 0  & 0 & \hdots & 0 & 0 & \theta
\end{matrix}\right ) + 
\left (\begin{matrix}
0 & 0 & 0 & \hdots & 0 \\
\vdots & \vdots &  \vdots & \ddots & \vdots \\
0 & 0 & 0 & \hdots & 0\\
1 & 0 & 0 & \hdots & 0\\
a_n & 1 & 0 & \hdots & 0
\end{matrix}\right )\tau
\end{equation}
and
\begin{equation}\label{yaction}
\rho\on_y := d[\eta] + E_\eta \tau :=
\left (\begin{matrix}
\eta & y_1 & z_1 & 1  & 0 & \hdots & 0 & 0 & 0 \\
0 & \eta & y_2 & z_2 & 1 & \hdots & 0 & 0 & 0 \\
0 & 0 & \eta  & y_3 & z_3 & \hdots & 0 & 0 & 0 \\
\vdots & \vdots &  \vdots  & \vdots & \vdots& \ddots & \vdots & \vdots& \vdots \\
0 & 0 & 0  & 0 &0 & \hdots & \eta & y_{n-2} & z_{n-2}\\
0 & 0 & 0  & 0 & 0 & \hdots & 0 & \eta & y_{n-1}\\
0 & 0 & 0  & 0 &  0 & \hdots & 0 & 0 & \eta
\end{matrix}\right ) + 
\left (\begin{matrix}
0 & 0 & 0  & 0  & \hdots & 0 \\
\vdots & \vdots &  \vdots & \vdots & \ddots & \vdots \\
0 & 0 & 0 & 0 &  \hdots & 0\\
1 & 0 & 0 & 0 &  \hdots & 0\\
z_{n-1} & 1 & 0 & 0 &  \hdots & 0\\
y_n & z_n & 1 & 0 & \hdots & 0
\end{matrix}\right )\tau,
\end{equation}
where $a_i$, $y_i$ and $z_i$ are given in Proposition \ref{P:definingequations}.

To simplify notation later, we define strictly upper triangular matrices $N_\theta$ and $N_\eta$ by
\begin{equation}\label{Nthetadef}
N_\theta = d[\theta] - \theta I\quad\text{and}\quad N_\eta = d[\eta] -   \eta I .
\end{equation}
With the definitions of $\rho\on_t$ and $\rho\on_y$, we define the $\F_q$-linear map
\[\rho\on_a: \bA \to \Mat_n(H[\tau])\]
for any $a\in \bA$ by writing $a=\sum c_it^i + y\sum d_it^i$ with $c_i,d_i\in \F_q$, and extending using linearity and the composition of maps $\rho\on_{t^a} = (\rho\on_t)^a$.  A priori, the map $\rho$ is just an $\F_q$-linear map, but we will shortly show that it actually is an $\F_q$-algebra homormophism and defines an Anderson $\bA$-module.

\begin{remark}
In general the coefficients $a_i$, $y_i$ and $z_i$ are not integral over $H$, which could lead to our chosen model for $\rho\on$ having bad reduction over certain places of $A$.  We suspect that it is possible to choose a normalization which has everywhere good reduction, but this would come at the expense of having more complicated formulas e.g. not having $1$'s across the last non-zero super diagonals of $\rho\on_t$ and $\rho\on_y$.
\end{remark}

Our main strategy for showing that the map $\rho\on$ is actually an Anderson $\bA$-module involves constructing a second Anderson $\bA$-module $\rho'$ using techniques of Hatle and Juschka, then showing that the maps $\rho\on$ and $\rho'$ align.  In what follows, for convenience, we fix the algebraically closed field $L$ from \S\ref{S:Amotives} to be $\C_\infty$, although we remark that much of the theory applies equally to any algebraically closed field.  For $g\in N = \Gamma(U,\mathcal O_E(-nV\twist))$, define the map
\[\varepsilon: N \to \C_\infty^n,\]
by writing $g$ in the basis for the dual $\bA$-motive arranged as
\begin{align}\label{gcoefficients1}
\begin{split}
g & = d_{1,0}h_1 + d_{1,1}h_1\twistinv f^n + \dots + d_{1,m}h_1\twistk{-m}(f f^{(-1)} \cdots f^{(-m+1)})^n\\
&+ d_{2,0}h_2 + d_{2,1}h_2\twistinv f^n + \dots + d_{2,m}h_2\twistk{-m}(f f^{(-1)} \cdots f^{(-m+1)})^n\\
&\quad\vdots\\
&+ d_{n,0}h_n + d_{n,1}h_n\twistinv f^n + \dots + d_{n,m}h_n\twistk{-m}(f f^{(-1)} \cdots f^{(-m+1)})^n,
\end{split}
\end{align}
where $d_{i,j}\in \C_\infty$ and at least one of the $d_{i,m}$ is non-zero, then defining
\begin{equation}\label{varepsdefinition}
\varepsilon(g) = 
\left (\begin{matrix}
d_{n,0}\\
d_{n-1,0}\\
\vdots\\
d_{1,0}
\end{matrix}\right ) +
\left (\begin{matrix}
d_{n,1}\\
d_{n-1,1}\\
\vdots\\
d_{1,1}
\end{matrix}\right )\twist+\dots + 
\left (\begin{matrix}
d_{n,m}\\
d_{n-1,m}\\
\vdots\\
d_{1,m}
\end{matrix}\right )\twistk{m}.
\end{equation}
One observes immediately from the definition that $\varepsilon$ is $\F_q$-linear.  We then obtain a proposition similar to Lemma 3.6 from \cite{GP16}.

\begin{proposition}\label{P:varepsilon}
The map $\varepsilon : N \to \C_\infty^n$ is surjective and
\[
\ker(\varepsilon) = (1 - \sigma)N  =  \bigl\{ g \in N \mid g = h^{(1)} - f^nh\ \textup{for some\ } h
\in \Gamma(U,\cO_{E}(-n(V))) \bigr\}.
\]
\end{proposition}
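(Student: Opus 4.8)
\emph{Proof strategy.} The plan is to verify five things in turn: that $\varepsilon$ is well defined and $\F_q$-linear, that it is surjective, that $(1-\sigma)N\subseteq\ker(\varepsilon)$, that $\ker(\varepsilon)\subseteq(1-\sigma)N$, and that $(1-\sigma)N$ coincides with the second set in the statement. Well-definedness is immediate from Proposition \ref{P:TauBasis}: the expansion \eqref{gcoefficients1} is the unique expansion of $g$ in the free $\C_\infty[\sigma]$-basis $\{h_1,\dots,h_n\}$, once one observes that $h_i\twistk{-j}(ff\twistinv\cdots f\twistk{-j+1})^n=\sigma^jh_i$; and $\F_q$-linearity is evident from the formula \eqref{varepsdefinition}. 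Surjectivity follows by exhibiting a preimage: for $\bw\in\C_\infty^n$ with entries $w_1,\dots,w_n$ read top to bottom, the element $g=\sum_{i=1}^n w_{n+1-i}h_i\in N$ (involving no $\sigma$'s) has $\varepsilon(g)=\bw$.

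For $(1-\sigma)N\subseteq\ker(\varepsilon)$ I would prove the slightly stronger identity $\varepsilon(\sigma h)=\varepsilon(h)$ for every $h\in N$. Writing $h=\sum_{i,j}c_{i,j}\sigma^jh_i$ and collecting the level-$j$ coefficients into a vector $\bc_j\in\C_\infty^n$ ordered as in \eqref{varepsdefinition}, one has $\varepsilon(h)=\sum_j\bc_j\twistk{j}$; since $\sigma h=\sum_{i,j}c_{i,j}\twistinv\sigma^{j+1}h_i$ has $\bc_j\twistinv$ as its level-$(j+1)$ coefficient vector, $\varepsilon(\sigma h)=\sum_j(\bc_j\twistinv)\twistk{j+1}=\sum_j\bc_j\twistk{j}=\varepsilon(h)$, whence $\varepsilon((1-\sigma)h)=0$. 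For the reverse containment I would take $g\in\ker(\varepsilon)$, write it as in \eqref{gcoefficients1} with top index $m$ and coefficient vectors $\bd_0,\dots,\bd_m$, and solve $(1-\sigma)h=g$ by matching coefficients level by level: this forces $\bc_0=\bd_0$ and $\bc_j=\bd_j+\bc_{j-1}\twistinv$ for $j\ge1$, equivalently $\bc_j=\sum_{k=0}^j\bd_k\twistk{k-j}$ with the convention $\bd_k=0$ for $k>m$. The decisive point is that $\bc_m=\bigl(\sum_{k=0}^m\bd_k\twistk{k}\bigr)\twistk{-m}=\varepsilon(g)\twistk{-m}=0$, after which the recursion gives $\bc_j=0$ for all $j\ge m$; hence $h=\sum_{i=1}^n\sum_{j=0}^{m-1}c_{i,j}\sigma^jh_i$ is an honest element of $N$ with $(1-\sigma)h=g$. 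I expect this to be the one substantive point: the vanishing of $\varepsilon(g)$ is exactly what makes the recursively defined $h$ terminate and therefore lie in $N$ rather than being a formal power series in $\sigma$; the rest is bookkeeping.

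Finally, the second description of $(1-\sigma)N$ is a relabeling. By the definition of the $\sigma$-action in \eqref{tauaction}, $(1-\sigma)h=h-f^nh\twistinv$ for $h\in N$; putting $\tilde h=h\twistinv$ rewrites this as $\tilde h\twist-f^n\tilde h$. Since the $(-1)$-twist carries $N=\Gamma(U,\cO_E(-nV\twist))$ bijectively onto $\Gamma(U,\cO_E(-nV))$ — its inverse being the $(1)$-twist, using $(V\twist)\twistinv=V$ and that twisting preserves regularity on $U$ — the function $\tilde h$ ranges over $\Gamma(U,\cO_E(-nV))$ as $h$ ranges over $N$, which gives the stated equality.
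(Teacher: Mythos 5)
Your proof is correct and follows essentially the same route as the paper's: expand in the free $\sigma$-basis $\{\sigma^j h_i\}$, use the twist-shift identity to see $(1-\sigma)N\subseteq\ker(\varepsilon)$, and for the reverse containment solve for the coefficients of $h$ level by level, with $\varepsilon(g)=0$ being exactly the condition that makes the recursion terminate so that $h$ is a genuine element of $N$ (equivalently of $\Gamma(U,\cO_E(-n(V)))$ after untwisting). You have simply written out explicitly the surjectivity and the coefficient bookkeeping that the paper leaves to the reader.
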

\begin{proof}
For $h \in \Gamma(U,\cO_E(-n(V)))$, we have $h^{(1)} \in N$ and $\sigma(h^{(1)}) = f^nh$, so the two objects on the right are the same.  Also, if we write $h\twist$ using the basis and notation from \eqref{gcoefficients1}, after a short calculation we find that $\varepsilon(h^{(1)}) = \varepsilon(f^nh)$, and thus $(1-\sigma)N \subseteq \ker(\varepsilon)$.  To show that $\ker(\varepsilon) \subseteq (1-\sigma)N$, we note that by the proof of Proposition \ref{P:TauBasis} each function on the right hand side of \eqref{gcoefficients1} has unique degree.  Then for $g \in \ker(\varepsilon)$, we can construct a function $h\in \Gamma(U,\cO_{E}(-n(V)))$ satisfying $g = h^{(1)} - f^nh$ through the following process.  We first note that degree considerations force $\deg(h) = \deg(g)  - n$, then we observe that $h\twist \in N$ and so we can write $h\twist$ in terms of the same basis used in \eqref{gcoefficients1} with coefficients $d_{i,j}'\in \C_\infty$.  Next, we set $g = h^{(1)} - f^nh$ and compare coefficients of equal degree terms on each side.  The fact that $g\in \ker(\varepsilon)$ allows us to solve for the coefficients $d_{i,j}'$ uniquely in terms of the coefficients of $g$, which proves that such a function $h\in \Gamma(U,\cO_{E}(-n(V)))$ exists.  We leave the details of the calculation to the reader.
\end{proof}
We then combine Proposition \ref{P:varepsilon} with a theorem of Hartl and Juschka \cite[Proposition 5.6]{HJ16} to obtain the following proposition.
\begin{proposition}\label{P:AndersonbA-module}
The map $\rho\on$ is an Anderson $\bA$-module.
\end{proposition}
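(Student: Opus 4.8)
The plan is to exhibit an isomorphism between the concretely-defined map $\rho^{\otimes n}$ of \eqref{taction}--\eqref{yaction} and the abstract Anderson $\bA$-module $\rho'$ that one extracts from the dual $\bA$-motive $N$ via the machinery of Hartl and Juschka. By \cite[Proposition 5.6]{HJ16}, applied to the dual $\bA$-motive $N$ equipped with its $\sigma$-action from \eqref{tauaction}, the quotient $N/(1-\sigma)N$ carries a canonical $\bA$-module structure, and multiplication by $a \in \bA$ on this quotient is $\F_q$-linear and $\tau$-semilinear in the appropriate sense; Proposition \ref{P:varepsilon} identifies this quotient $\F_q$-linearly (and compatibly with twisting) with $\C_\infty^n$ via $\varepsilon$. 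Thus there is an Anderson $\bA$-module $\rho'$ on $\C_\infty^n$ characterized by $\varepsilon(a \cdot g) = \rho'_a(\varepsilon(g))$ for all $g \in N$ and $a \in \bA$. The goal reduces to checking $\rho' = \rho^{\otimes n}$, and since both are $\F_q$-algebra maps generated by their values at $t$ and $y$, it suffices to verify $\rho'_t = \rho^{\otimes n}_t$ and $\rho'_y = \rho^{\otimes n}_y$.

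First I would compute $\rho'_t$ explicitly by tracing the action of multiplication-by-$t$ on $N$ through $\varepsilon$. Using the duality identities of Lemma \ref{L:basisduality} together with Proposition \ref{P:definingequations} — specifically the relation $t h_i = \theta h_i + b_i h_{i+1} + h_{i+2}$ and the extended-index conventions \eqref{gihigher} for $h_i$ with $i > n$ — one expresses $t \cdot g$ in the basis \eqref{gcoefficients1} in terms of the coefficients $d_{i,j}$ of $g$. The key point is that multiplication by $t$ shifts the $h$-index by one or two and, when the index wraps past $n$, introduces a factor of $f^n$, which is precisely the bookkeeping that the operator $\tau$ does in \eqref{varepsdefinition}; carrying this out coordinate-by-coordinate yields exactly the matrix $d[\theta] + E_\theta\tau$ of \eqref{taction}, provided $b_{n-j} = a_j$ for $j \leq n-1$ and $b_n^q = a_n$. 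These are exactly the identities established in Proposition \ref{P:coefficientidentities}, so they slot in cleanly. The same computation with $y h_i = \eta h_i + \dots$ (the $h$-analogue of the second equation of Proposition \ref{P:definingequations}, whose coefficients are related to $y_i, z_i$ by the same degree/sign argument) produces the matrix of \eqref{yaction}, giving $\rho'_y = \rho^{\otimes n}_y$.

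Since $\rho'$ is genuinely an Anderson $\bA$-module (it is the one produced by \cite[Proposition 5.6]{HJ16}, so in particular $d' : \bA \to \Mat_n(\C_\infty)$ is a ring homomorphism and the defining properties hold), and $\rho^{\otimes n}$ agrees with $\rho'$ on the generators $t$ and $y$ of $\bA$, and $\rho^{\otimes n}$ was \emph{defined} to be the $\F_q$-linear extension respecting composition in $t$, the two maps coincide on all of $\bA$; in particular $\rho^{\otimes n}$ inherits the property of being an Anderson $\bA$-module. The main obstacle is the bookkeeping in the explicit computation of $\rho'_t$ and $\rho'_y$: one must be careful with the ordering of coordinates (note the reversal $d_{n,0}, d_{n-1,0}, \dots, d_{1,0}$ in \eqref{varepsdefinition}), with the placement of the wrap-around $f^n$ factors relative to the $\sigma$-expansion, and with which coefficient relation from Proposition \ref{P:coefficientidentities} is needed where — the $a_n = b_n^q$ identity in particular is what forces the single entry $a_n$ (rather than $b_n$) in the $\tau$-part of $\rho^{\otimes n}_t$, since applying $\varepsilon$ converts a coefficient into its Frobenius twist. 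Once the indexing is set up correctly the verification is a routine, if lengthy, matching of coefficients, so I would state the identification of $\rho'_t$ and $\rho'_y$ and leave the entry-by-entry check to the reader, as the paper does elsewhere.
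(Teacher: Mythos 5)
Your proposal is correct and follows essentially the same route as the paper: identify $N/(1-\sigma)N$ with $\C_\infty^n$ via $\varepsilon$ (Proposition \ref{P:varepsilon}), invoke Hartl--Juschka to get an Anderson $\bA$-module $\rho'$ from the induced $\bA$-action, and then match $\rho'_t$, $\rho'_y$ with \eqref{taction}, \eqref{yaction} by computing the action of $t$ and $y$ on the $h_i$ via Proposition \ref{P:definingequations} and the identities $a_j=b_{n-j}$, $a_n=b_n^q$ of Proposition \ref{P:coefficientidentities}. Your remarks on the coordinate reversal in \eqref{varepsdefinition} and on the wrap-around $f^n$ factors producing the $\tau$-part (and the Frobenius twist forcing $a_n=b_n^q$) are exactly the bookkeeping the paper's proof carries out.
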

\begin{proof}
Since $N$ is free of rank $n$ and finitely generated as a $\C_\infty[\sigma]$-module, the quotient module $N/(1-\sigma)N$ is isomorphic as an $\F_q$-vector space to $\C_\infty^n$.  We choose a basis for $N/(1-\sigma)N$ consisting of the functions $\overline{h_i}$, the images of $h_i$ under the quotient map, then observe by Proposition \ref{P:varepsilon}, that this isomorphism is given by $\varepsilon$.  This gives rise to the following commutative diagram, 
\begin{equation}\label{Amotivediagram}
\begin{diagram}
N/(1-\sigma)N &\rTo^{\varepsilon} & \C_\infty^n  \\
\dTo^{a} & & \dTo_{\rho_a'} \\
N/(1-\sigma)N &\rTo^{\varepsilon} & \C_\infty^n
\end{diagram}
\end{equation}
where the vertical map on the left is multiplication by $a\in \bA$ and the vertical map on the right is the map induced by multiplication by $a$ under the isomorphism $\varepsilon$.  This diagram describes an action of $\bA$ on the space $\C_\infty^n$, and a priori, the induced action $\rho_a'$ is in $\End_{\F_q}(\C_\infty^n)$.  However, proposition 5.6 of Hartl and Juschka \cite{HJ16} shows that $\rho_a'$ is actually in $\Mat_n(\C_\infty[\sigma])$ and that it defines an Anderson $\bA$-module.  To write down the action of $\rho_a'$, we only need to analyze the action of $a$ on the basis elements $h_i$ (we drop the overline notation, since there is no confusion), and since $\bA$ is generated as an algebra by $t$ and $y$, we only need to consider the action of $t$ and $y$ on the basis elements.  We first note that for $1\leq i\leq n-2$ and $d_i\in \C_\infty$ by Proposition \ref{P:definingequations} and by the definition of $\varepsilon$ in \eqref{varepsdefinition}
\[\varepsilon(td_ih_i) = \varepsilon(d_i(\theta h_i + b_ih_{i+1} + h_{i+2})) = d_i(0,\dots,0,1,b_i,\theta,0,\dots,0)^\top\]
while we also have
\[
\varepsilon(td_{n-1}h_{n-1})= \varepsilon(d_{n-1}(\theta h_{n-1} + b_{n-1}h_{n} + \sigma(h_{1}))) = d_{n-1}(b_i,\theta,0,\dots,0)^\top + d_{n-1}^q(0,\dots,0,1)^\top
\]
\[
\varepsilon(td_{n}h_{n})= \varepsilon(d_{n}(\theta h_{n} + b_{n}\sigma(h_{1}) + \sigma(h_{2}))) = d_{n-1}(\theta,0,\dots,0)^\top + d_{n-1}^q(0,\dots,0,1,b_n^q)^\top.
\]
Using the identities from Proposition \ref{P:coefficientidentities}, and piecing this all together, yields
\[\varepsilon(t(d_1h_1 + \dots + d_nh_n)) = (d[\theta] + E_\theta \tau)(d_1,\dots, d_n)^\top = \rho\on_t(d_1,\dots, d_n)^\top.\]
Similar analysis gives
\[\varepsilon(y(d_1h_1 + \dots + d_nh_n)) = \rho\on_y(d_1,\dots, d_n)^\top.\]
Therefore, the operators $\rho_t' = \rho\on_t$ and $\rho_y' = \rho\on_y$, and we see that the map $\rho$ defined in \eqref{taction} is actually an $\bA$-module homomorphism and defines an Anderson $\bA$-module.
\end{proof}

\begin{remark}
We comment that it is likely possible to prove that $\rho$ is an Anderson $\bA$-module by appealing to Mumford's work in \cite{Mumford78} as does Thakur in \cite{Thakur93}, however, we prefer the approach inspired by Hartl and Juschka in \cite{HJ16}.
\end{remark}

Having proved that $\rho\on$ is an Anderson $\bA$-module, we will label the exponential and logarithm function associated to $\rho\on$ as
\begin{equation}\label{Exptensor}
\Exp\on_\rho(\bz) = \sum_{i=0}^\infty Q_i \bz\twisti \in \power{\Mat_n(\C_\infty)}{\bz}
\end{equation}
and
\begin{equation}\label{Logtensor}
\Log _\rho(\bz) = \sum_{i=0}^\infty P_i \bz\twisti \in \power{\Mat_n(\C_\infty)}{\bz}.
\end{equation}

%As explained in (\textbf{cite our paper, \S 3}), a sign-normalized rank 1 Drinfeld Hayes module is entirely defined by the $t$ action, which has the form
%\[\rho_t = \theta + x_1 \tau + \tau^2,\quad x_1 \in \overline K.\]
%Thus, the action of the entire Drinfeld module is specified by the constant $x_1$.  It is natural, then, to expect that the action of the tensor powers of such a sign-normalized rank 1 Drinfeld Hayes module can be similarly expressed in terms of $x_1$.  We explain how this is done in the following proposition.\\

%\begin{proposition}
%For dimension $n\geq 2$, we can write the $t$ action
%\[\rho_t = \left (\theta I + \varepsilon\cdot M + N_2\right ) + (\gamma E_1 + E_2)\tau,\]
%where $I$ is the $n$ dimensional identity matrix, $M$ is the matrix
%\[M = 
%\left (\begin{matrix}
%0 & \frac{1}{t(V\twist + [n-1]V) - \theta} & 0 & \dots & 0  \\
%0 & 0 & \frac{1}{t([2]V\twist + [n-2]V) - \theta} & \dots & 0  \\
%\vdots & \vdots  & \vdots & \ddots &  \vdots \\
%0 & 0 & 0 & \dots&  \frac{1}{t([n-1]V\twist + V) - \theta}   \\
%0 & 0 & 0 & \dots&  0  \\
%\end{matrix}\right ),\]
%where
%\[\varepsilon =  f(-\Xi)\cdot \prod_{j=2}^{n} \left (\frac{f([j-2]V\twist + [2]V)}{f([j-1]V\twist + V)}\right ) \cdot \left (t([n-1]V\twist + V) - \theta \right ),\]
%and where
%\[\gamma = x_1\cdot \prod_{j=1}^{n-1} \frac{f([j]V\twist+V)}{f(V\twistk{2} + [j-1]V\twist+V)}.\]
%\end{proposition}
%\begin{proof}
%\textbf{(Decide how much proof to put in)}
%\end{proof}

\section{Operators and the Space $\Omega_0$}\label{S:Operators}
In \cite[\S 2.5]{AndThak90} Anderson and Thakur define a $\bA$-module of functions which they call $\Omega_n$ (our notation is $\Omega_0$) which vanish under the operator $\tau - f^n$.  They then connect this space of functions to the period lattice of the exponential function by expressing a function $h\in \Omega_n$ in terms of $t-\theta$, which is a uniformizer at $\Xi$, then analyzing the principle part $h$ in this expansion.  Of particular note, they construct an ancillary vector-valued function $\tilde h$ which they use to aid their calculations in the proof of their period formulas.  In the case of tensor powers of Drinfeld $\bA$-modules we found it necessary to rely entirely upon the equivalent version of $\tilde h$, rather than using it as an ancillary tool.  In this section, we develop a vector setting in which we can embed the space $\Omega_0$ and analyze vector-valued operators on it.

For a fixed a dimension $n$ define
\[\BB := \Gamma \bigl( \cU, \cO_E(-n(V) + n(\Xi))\bigr)\]
where $\cU$ is the inverse image under $t$ of the closed disk in $\C_\infty$ of radius $|\theta|$ centered at 0 defined in \S \ref{S:Background}, and define the $\bA$-module
\begin{equation}\label{Omegadef}
\Omega = \{h\in \BB\mid h\twist - f^n h \in N\},
\end{equation}
where we recall the definition of $N$ from \S \ref{S:Amotives}.  Also define a submodule of $\Omega$ as
\begin{equation}\label{Omega0def}
\Omega_0 = \{h\in \BB\mid h\twist - f^n h = 0\}.
\end{equation}
For a function $h(t,y)\in \Omega$, define the map $T:\Omega \to \TT[y]^n$ by
\begin{equation}\label{Tmapdef}
T( h(t,y)) =
\left (\begin{matrix}
h(t,y)\cdot g_1  \\
h(t,y)\cdot g_2  \\
\vdots  \\
h(t,y)\cdot g_n \\
\end{matrix}\right ),
\end{equation}
where the functions $g_i$ are the basis elements defined in \eqref{gidivisor}.  We observe immediately that $T$ is $\F_q$-linear and injective.

\begin{remark}
The map $T$ can be viewed as a generalization of the $\tilde{\cdot}$ operator defined by Anderson and Thakur in the proof of 2.5.5 of \cite{AndThak90}, where they define for $h(t)\in \TT$,
\[\tilde h(t) =
\left (\begin{matrix}
h(t)\cdot 1  \\
h(t)\cdot (t-\theta)  \\
\vdots  \\
h(t)\cdot (t-\theta)^{n-1}  \\
\end{matrix}\right ).\]
Note that the function $t-\theta$, aside from being a uniformizer at $\Xi$, is also the shtuka function for the Carlitz module, and that it shows up in the $\tau$-basis for the $\bA$-motive associated to the $n$th tensor power of the Carlitz module (see \cite[\S 3.6]{PLogAlg}).  It is not immediately obvious which of these notions leads to the correct generalization of $\tilde{\cdot}$ for Anderson $\bA$-modules.  After noticing properties such as Lemma \ref{L:Operators} and Theorem \ref{T:Period}, however, it seems clear that the definition of $T(\cdot )$ is the correct generalization for the present concerns.
\end{remark}

Define operators on the space $\TT[y]^n$ which act in the sense defined in \S \ref{S:Background} by setting
\[D_t := \rho\on_t - t,\quad\text{and}\quad D_y = \rho\on_y - y.\]
\begin{lemma}\label{L:Operators}
For $h\in \Omega_0$,
\[D_t(T(h)) = D_y(T(h)) = 0.\]
\end{lemma}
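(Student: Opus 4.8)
The plan is to unwind the definitions of $T$, $D_t$, and the $\tau$-action on $M$, and to show that applying $D_t$ to $T(h)$ produces the vector whose $i$th entry is $(t-\theta)$ times a combination of the $g_j$ that is forced to vanish because $h \in \Omega_0$. Concretely, $D_t(T(h)) = \rho^{\otimes n}_t(T(h)) - t\,T(h)$, and since $\rho^{\otimes n}_t = d[\theta] + E_\theta\tau$ acts on vectors in $\TT[y]^n$ by twisting the entries (in the sense of \eqref{Deltaaction}), the $i$th entry of $\rho^{\otimes n}_t(T(h))$ is $\theta\, h g_i + a_i\, h g_{i+1} + h g_{i+2}$ for $1 \le i \le n-2$, while for $i = n-1, n$ the $E_\theta\tau$ part contributes twisted terms $h^{(1)} g_i^{(1)}$ (the entries $1$ and $a_n$ in the $\tau$-block), which by the definition of $g_i$ for $i>n$ in \eqref{gihigher} equals $h^{(1)} f^{-n} g_{i+2} \cdot f^n$-type expressions — more precisely $\tau$ acting on the $g$-side multiplies by $f^n$, so the bottom two entries become $\theta h g_i + a_i h g_{i+1} + f^n h^{(1)} g_k^{(1)}$ where $g_{i+2} = f^n g_k^{(1)}$.

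First I would record the uniform statement coming out of Proposition~\ref{P:definingequations} together with \eqref{gihigher}: for \emph{all} $i \ge 1$ one has $t g_i = \theta g_i + a_i g_{i+1} + g_{i+2}$, where for $i > n$ the indices wrap around via the $\tau$-action and this identity is just $\tau$ applied to the base case. This is the key point that makes the boundary rows $i = n-1, n$ behave exactly like the interior rows once one is careful that $g_{n+1} = \tau(g_1) = f^n g_1^{(1)}$ and $g_{n+2} = \tau(g_2) = f^n g_2^{(1)}$. Then the $i$th entry of $D_t(T(h))$ is $\theta h g_i + a_i h g_{i+1} + (\text{$g_{i+2}$-term}) - t h g_i$, where the $g_{i+2}$-term for interior rows is literally $h g_{i+2}$ and for the bottom two rows is $h^{(1)} g_{i+2}^{(1)} \cdot (\text{appropriate power of } f)$. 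Using $h^{(1)} = f^n h$ (the defining property of $\Omega_0$) together with $g_{i+2} = f^n g_k^{(1)}$ converts the twisted bottom-row term back to $h g_{i+2}$: indeed $h^{(1)} f^{-n}\cdot f^n g_k^{(1)}$ — I would write this out carefully — collapses to $h g_{i+2}$. Hence every entry becomes $\theta h g_i + a_i h g_{i+1} + h g_{i+2} - t h g_i = h(\theta g_i + a_i g_{i+1} + g_{i+2} - t g_i) = h \cdot 0 = 0$ by Proposition~\ref{P:definingequations}. The computation for $D_y(T(h))$ is identical in structure, using the three-term (well, four-term) recursion $y g_i = \eta g_i + y_i g_{i+1} + z_i g_{i+2} + g_{i+3}$ from Proposition~\ref{P:definingequations}, the shape of $E_\eta$ in \eqref{yaction}, and again $h^{(1)} = f^n h$ to handle the bottom three rows.

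I also need to check the soft point that everything lives in the right spaces: if $h \in \Omega_0 \subseteq \BB$ then each $h g_i$ lies in $\TT[y]$, so $T(h) \in \TT[y]^n$ and the operators $D_t, D_y$ from Lemma~\ref{L:Operators}'s setup genuinely apply; this follows from the divisor conditions defining $\BB$ and \eqref{gidivisor} (the poles at $V$ cancel, leaving something regular on $\cU$) and the fact that $\TT[y]$ is closed under twisting. The main obstacle I anticipate is purely bookkeeping rather than conceptual: getting the index wraparound at rows $n-1$ and $n$ exactly right — tracking which power of $f^n$ appears when $\tau$ hits $g_{n-1}, g_n$ versus $g_1, g_2$, and confirming that the single substitution $h^{(1)} = f^n h$ is precisely what is needed to reconcile the twisted terms with the untwisted recursion. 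Once the uniform recursion $t g_i = \theta g_i + a_i g_{i+1} + g_{i+2}$ (valid for all $i$) is in hand and the $\Omega_0$-relation is applied, the vanishing is immediate entry by entry.
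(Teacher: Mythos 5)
Your argument is correct and is essentially the paper's own proof: both reduce the claim to the uniform recursion $t g_i = \theta g_i + a_i g_{i+1} + g_{i+2}$ of Proposition~\ref{P:definingequations} (extended past $i=n$ via \eqref{gihigher}, so $g_{n+1}=f^n g_1\twist$, $g_{n+2}=f^n g_2\twist$) and the single substitution $h\twist = f^n h$ to reconcile the twisted bottom rows coming from $E_\theta\tau$ (respectively $E_\eta\tau$) with the untwisted recursion, yielding $\rho\on_t(T(h)) = t\,T(h)$ entry by entry; the paper merely runs the identity in the direction $t\cdot T(h) = d[\theta]T(h) + E_\theta T(h)\twist$ rather than expanding $\rho\on_t(T(h))$ first. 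The only blemish is the garbled intermediate expression ``$h\twist f^{-n}\cdot f^n g_k\twist$'' in your first paragraph, which your second paragraph corrects.
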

\begin{proof}
Using \eqref{P:definingequations} and the fact that $h\in \Omega_0$, observe that
\[t\cdot T(h) = 
\left (\begin{matrix}
th(t,y)\cdot g_1  \\
th(t,y)\cdot g_2  \\
\vdots  \\
th(t,y)\cdot g_n \\
\end{matrix}\right )
=
\left (\begin{matrix}
h(t,y)\cdot (\theta g_1 + a_1 g_2 + g_3)  \\
h(t,y)\cdot (\theta g_2 + a_2 g_3 + g_4)  \\
\vdots  \\
h(t,y)\cdot (\theta g_n + a_n g_1\twist f^n + g_2\twist f^n) \\
\end{matrix}\right )
=
d[\theta] T(h) + E\cdot T(h) \twist.
\]
Thus we see that $\rho\on_t(\tilde h) = t\cdot T(h)$ and so $D_t(T(h)) = 0$.  A similar argument shows that $D_y(T(h)) =~ 0$.
\end{proof}
Define an additional operator on $\TT[y]^n$,
\begin{equation}\label{GE1operator}
G - E_1\tau := 
\left (\begin{matrix}
g_2/g_1 & -1 & 0 & \hdots & 0\\
0 & g_3/g_2 & -1 & \hdots & 0 \\
0 & 0 & g_4/g_3 & \hdots & 0\\
\vdots & \vdots &  \vdots & \ddots & \vdots \\
0 & 0 & 0 & \hdots & g_1\twist f^n/g_n
\end{matrix}\right ) - 
\left (\begin{matrix}
0 & 0 & 0 & \hdots & 0\\
0 & 0 & 0 & \hdots & 0 \\
0 & 0 & 0 & \hdots & 0\\
\vdots & \vdots &  \vdots & \ddots & \vdots \\
1 & 0 & 0 & \hdots & 0
\end{matrix}\right )\tau.
\end{equation}
A quick calculation shows that for any $h\in \Omega_0$
\[\left [G-E_1\tau\right ](T(h)) = 0,\]
and thus the operator $G-E_1\tau$ can be viewed as a vector version of the operator $\tau - f^n$.  In fact, the relationship is even stronger, as proved in the following lemma.

\begin{lemma}\label{L:GE1andOmega}
A vector $J(t,y)\in \TT[y]^n$ satisfies $(G-E_1\tau)(J)=0$ if and only if there exists some function $h(t,y)\in \Omega_0$ such that 
\[J(t,y) = T(h(t,y)).\]
\end{lemma}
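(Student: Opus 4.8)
The plan is to prove the two directions separately. The ``if'' direction is already essentially recorded: the quick calculation preceding the lemma shows that $(G-E_1\tau)(T(h))=0$ for every $h\in\Omega_0$, so nothing further is needed there. The substance of the lemma is the converse: starting from an arbitrary $J=(J_1,\dots,J_n)^\top\in\TT[y]^n$ with $(G-E_1\tau)(J)=0$, I must manufacture a single scalar function $h\in\Omega_0$ with $T(h)=J$, i.e.\ with $h\,g_i=J_i$ for all $i$. The natural candidate is $h:=J_1/g_1$, and the whole task is to show (a) that this $h$ lies in $\BB$, (b) that it satisfies $h^{(1)}-f^nh=0$, hence $h\in\Omega_0$, and (c) that then $h\,g_i=J_i$ for the remaining coordinates $i=2,\dots,n$, not just $i=1$.

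First I would write out the equation $(G-E_1\tau)(J)=0$ coordinate-by-coordinate using the explicit matrix \eqref{GE1operator}: for $1\le i\le n-1$ it reads $(g_{i+1}/g_i)J_i - J_{i+1}=0$, and the last row reads $(g_1^{(1)}f^n/g_n)J_n - J_1^{(1)}=0$. The first $n-1$ relations give $J_{i+1}=(g_{i+1}/g_i)J_i$ for $1\le i\le n-1$, which telescopes to $J_i=(g_i/g_1)J_1$ for all $i$; equivalently, if we set $h:=J_1/g_1\in\TT(t,y)$, then $J_i=h\,g_i$ for every $i$, so that $J=T(h)$ \emph{provided we can show $h\in\Omega$}. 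The last-row relation then becomes $(g_1^{(1)}f^n/g_n)\cdot h g_n = (h g_1)^{(1)} = h^{(1)}g_1^{(1)}$, i.e.\ $f^n h = h^{(1)}$, which is exactly the defining equation \eqref{Omega0def} of $\Omega_0$. So the only remaining point is the integrality/analyticity claim: that $h=J_1/g_1$ actually lies in $\BB=\Gamma(\cU,\cO_E(-n(V)+n(\Xi)))$, and that each $J_i=h g_i\in\TT[y]$ as the definition of $T$ demands.

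For the analyticity, I would argue on the rigid analytic space $\cU$. Since $J\in\TT[y]^n$, each $J_i$ is a rigid analytic function on $\cU$. The function $g_1$ has divisor $-n(V)+(n-1)(\infty)+0\cdot(\Xi)+([n]V)$ by \eqref{gidivisor} with $j=1$; on $\cU$ (which omits $\infty$) its only zero is at $[n]V$ and its only pole is along $n(V)$. From $J_2=(g_2/g_1)J_1$ together with $J_2$ being regular on $\cU$, and iterating through all the coordinate relations, the possible pole of $h=J_1/g_1$ at $[n]V$ (coming from the zero of $g_1$) is forced to cancel — more precisely, the system of relations forces $h$ to have at worst a pole of order $n$ along $(V)$ and at worst a pole of order $n$ at $(\Xi)$ and to be regular elsewhere on $\cU$, which is exactly membership in $\BB$; then $h g_i\in\Gamma(\cU,\cO_E(i(\infty)))$ matches $\TT[y]^n$ after checking the $y$-degree bookkeeping against \eqref{Tmapdef}. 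I expect this analyticity/divisor bookkeeping to be the main obstacle: one has to run the divisor estimates around both the point $[n]V$ and the points where the various $g_i$ vanish simultaneously, and confirm that the telescoped relations leave no uncancelled pole off $(V)\cup(\Xi)$, using that $\tsgn(g_i)=1$ to pin down leading coefficients. Once $h\in\BB$ is established the rest is the formal manipulation above, and I would close by remarking that $h^{(1)}-f^nh=0$ together with $h\in\BB$ is the definition of $\Omega_0$, so $J=T(h)$ with $h\in\Omega_0$ as claimed.
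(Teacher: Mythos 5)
Your argument is essentially the paper's own proof: you write out the coordinate equations of $(G-E_1\tau)(J)=0$, telescope the first $n-1$ of them to get $j_i=(g_i/g_1)\,j_1$, and read off $(\tau-f^n)(j_1/g_1)=0$ from the last row, so $J=T(j_1/g_1)$ with $j_1/g_1\in\Omega_0$; the paper stops exactly there and does not spell out the membership $j_1/g_1\in\BB$ that you go on to discuss. Two small corrections to that supplementary discussion: membership in $\BB=\Gamma\bigl(\cU,\cO_E(-n(V)+n(\Xi))\bigr)$ means $\divisor(h)\geq n(V)-n(\Xi)$ on $\cU$, i.e.\ a \emph{zero} of order at least $n$ at $V$ and a pole of order at most $n$ at $\Xi$, not ``a pole of order $n$ along $(V)$''; and an element of $\TT[y]$ is a priori analytic only on the unit-disk affinoid (that is $\TT_\theta[y]$, not $\TT[y]$, corresponding to $\cU$), so the behavior of $h=j_1/g_1$ at $V$, $\Xi$, and $[n]V$ is accessed by iterating the functional equation, $h=h^{(2)}/(ff^{(1)})^n$ on $\cU$, rather than by direct divisor bookkeeping with $J_1$; with that extension in hand, the divisor of $f$ gives the required zero at $V$ and pole bound at $\Xi$, and the relation $j_2=hg_2$ removes the potential simple pole at $[n]V$ exactly as you indicate.
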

\begin{proof}
We have already seen above that $(G-E_1\tau)(T(h))=0$ for all $h\in \Omega_0$.  For the other direction, suppose that $J(t,y)\in \TT[y]^n$ satisfies $(G-E_1\tau)(J)=0$.  Then, if we denote $J = (j_1,\dots, j_n)^\top$, writing out the action of $G-E_1\tau$ on each coordinate gives equations 
\begin{align}\label{eequations}
\nonumber j_1 \frac{g_2}{g_1} - j_2 &= 0\\
\nonumber j_2 \frac{g_3}{g_2} - j_3 &= 0\\
&\vdots\\
\nonumber j_{n-1}\frac{g_n}{g_{n-1}} - j_n &= 0\\
\nonumber j_n\frac{g_1\twist f^n}{g_n} - j_1\twist &= 0.
\end{align}
Solving the first equation for $j_2$ then substituting it into the second, and so on, gives the equality of vectors
\[
\left (\begin{matrix}
j_1  \\
j_2  \\
\vdots  \\
j_n
\end{matrix}\right )
=
\left (\begin{matrix}
j_1  \\
j_1\cdot g_2/g_1  \\
\vdots  \\
j_1\cdot g_n/g_1
\end{matrix}\right ),
\]
and the equality $(\tau - f^n)(j_1/g_1) = 0$, so we see that $J = T(j_1/g_1)$ with $j_1/g_1\in \Omega_0$ as desired.
\end{proof}

We use the quotient functions $g_{k+1}/g_{k}$ frequently throughout this section, so we briefly describe some of their properties.  Using the notation for $k> n$ for $g_k$ from \eqref{gihigher}, the quotients have divisors
\begin{equation}\label{gquotientdiv}
\divisor(g_{k+1}/g_{k}) = (\Xi) - (\infty) + ([k]V\twist + [n - k]V) - ([k-1]V\twist + [n - (k-1)]V),
\end{equation}
for $1\leq k\leq n$.
Thus we can write these functions as a quotient of a linear function of degree 3 and a linear function of degree 2, which we label
\begin{equation}\label{gquotient}
\frac{\nu_k(t,y)}{\delta_k(t)} := \frac{y - \eta - m_k(t-\theta)}{t - t([k-1]V\twist + [n - (k-1)]V)} =\frac{g_{k+1}}{g_k}  ,
\end{equation}
for $1\leq k\leq n$, where $m_k$ is the slope between the points $[k]V\twist + [n - k]V$ and $[-(k-1)]V\twist - [n - (k-1)]V$.
\begin{remark}
The functions $g_{k+1}/g_k$ share many similarities with the shtuka funciton $f$, and they can be viewed as a vector version of the shtuka function; in fact, the divisor of $g_{k+1}/g_k$ matches with the divisor of the shtuka function, except that the points $V\twist$ and $V$ in $\divisor(f)$ from \eqref{fdivisor} are shifted by $([k-1]V\twist+[n-k]V)$.
\end{remark}
With the above analysis we are now equipped to give explicit formulas for the coefficients $a_i$ from Proposition \ref{P:definingequations}, which determine the action of $\rho\on_t$.
\begin{corollary}\label{C:aivalue}
The coefficients $a_i$ from Proposition \ref{P:definingequations} are given by
\[a_i = \frac{2\eta + c_1\theta + c_3}{\theta - t([i]V\twist + [n-i]V)}\]
\end{corollary}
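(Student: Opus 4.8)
The plan is to realize each $a_i$ as the value at $\Xi$ of a single explicit element of $H(t,y)$ built from the defining relation in Proposition~\ref{P:definingequations} and the quotient description~\eqref{gquotient}. First I would divide $tg_i = \theta g_i + a_i g_{i+1} + g_{i+2}$ by $g_{i+1}$ to get
\[
(t-\theta)\,\frac{g_i}{g_{i+1}} = a_i + \frac{g_{i+2}}{g_{i+1}},
\]
and rewrite both quotients via~\eqref{gquotient}, namely $g_i/g_{i+1} = \delta_i/\nu_i$ and $g_{i+2}/g_{i+1} = \nu_{i+1}/\delta_{i+1}$, where $\nu_k = y-\eta-m_k(t-\theta)$ and $\delta_k = t - t([k-1]V\twist + [n-(k-1)]V)$. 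Since $\nu_{i+1}$ is a line through $\Xi=(\theta,\eta)$ it vanishes there, while $\delta_{i+1}(\Xi) = \theta - t([i]V\twist + [n-i]V)$ is nonzero (otherwise $\Xi$ would be a pole of $g_{i+2}/g_{i+1}$, contradicting the shape of the divisor~\eqref{gquotientdiv}). Setting $\phi := (t-\theta)\,\delta_i/\nu_i \in H(t,y)$, evaluation at $\Xi$ then gives $a_i = \phi(\Xi)$, so the corollary reduces to computing $\phi(\Xi)$.

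To evaluate $\phi(\Xi)$ I would identify $\phi$ from its divisor. Writing $W_k := [k]V\twist + [n-k]V$, adding $\divisor(\delta_i) = (W_{i-1}) + (-W_{i-1}) - 2(\infty)$ to~\eqref{gquotientdiv} gives $\divisor(\nu_i) = (\Xi) + (W_i) + (-W_{i-1}) - 3(\infty)$, and combining with $\divisor(t-\theta) = (\Xi)+(-\Xi)-2(\infty)$ yields
\[
\divisor(\phi) = (-\Xi) + (W_{i-1}) - (W_i) - (\infty).
\]
The points $-\Xi$, $W_{i-1}$, $-W_i$ sum to $O$ in the group law of $E$, because $W_{i-1} - W_i = V - V\twist = \Xi$ by~\eqref{Vequation}; hence they are collinear, and I would let $\ell = y + \eta + c_1\theta + c_3 - m'(t-\theta)$ be the line through them written in point-slope form at $-\Xi = (\theta,\,-\eta-c_1\theta-c_3)$ (legitimate since none of the three points is $\infty$, so $\ell$ is not vertical). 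Then $\ell/(t-t(W_i))$ has exactly the divisor of $\phi$, so the two differ by a scalar in $H^\times$; comparing leading terms in the basis $\{t^j,\ t^jy\}$ shows both have $\tsgn$ equal to $1$, so in fact $\phi = \ell/(t-t(W_i))$. Evaluating at $\Xi$, where $\ell(\Xi) = \eta + \eta + c_1\theta + c_3$, gives
\[
a_i = \phi(\Xi) = \frac{\ell(\Xi)}{\theta - t(W_i)} = \frac{2\eta + c_1\theta + c_3}{\theta - t([i]V\twist + [n-i]V)},
\]
as claimed.

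This argument runs smoothly for $1 \le i \le n-1$; the endpoint $i = n$ is the one place that needs extra care, since by~\eqref{gihigher} one has $g_{n+1} = \tau(g_1)$ and $g_{n+2} = \tau(g_2)$, so $g_{n+2}/g_{n+1} = (g_2/g_1)\twist$ is a Frobenius twist whose zero lies at $\Xi\twist$ rather than at $\Xi$ and the clean cancellation at $\Xi$ in the first step breaks down. I would deal with this endpoint using Proposition~\ref{P:coefficientidentities}, which gives $a_n = b_n^q$, combined with the parallel computation on the dual motive $N$ and the relation imposed by $\rho\on_t \rho\on_y = \rho\on_y \rho\on_t$; controlling these Frobenius-twisted quotients and checking that the correction terms collapse to the stated formula is, I expect, the main obstacle of the proof, while everything else is routine once the divisor bookkeeping and the $\tsgn$ normalization are set up.
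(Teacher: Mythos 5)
Your argument is correct on the range it actually covers, $1\le i\le n-1$, but it is noticeably more laborious than the paper's. Both proofs begin identically, dividing $tg_i=\theta g_i+a_ig_{i+1}+g_{i+2}$ by $g_{i+1}$; the paper then evaluates at $-\Xi$ instead of $\Xi$. Since $t-\theta$ vanishes at $-\Xi$ while $g_i/g_{i+1}$ is regular there, the term $(t-\theta)g_i/g_{i+1}$ drops out and one gets $a_i=-\frac{g_{i+2}}{g_{i+1}}\big|_{-\Xi}$ immediately; plugging $-\Xi$ into \eqref{gquotient} with $k=i+1$ (numerator value $-(2\eta+c_1\theta+c_3)$, denominator value $\theta-t([i]V\twist+[n-i]V)$) finishes the proof in two lines. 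Your choice of $\Xi$, where $t-\theta$ and $\nu_i$ both vanish, creates a $0/0$ that you then resolve by computing $\divisor\bigl((t-\theta)\delta_i/\nu_i\bigr)$, invoking the collinearity of $-\Xi$, $[i-1]V\twist+[n-i+1]V$ and $-([i]V\twist+[n-i]V)$ via \eqref{Vequation}, and matching $\tsgn$'s to pin down the auxiliary line; these steps all check out, but they re-derive by hand information already packaged in \eqref{gquotient}. What your route buys is that it uses only the divisor data \eqref{gquotientdiv} rather than the explicit slope description; what the paper's evaluation point buys is brevity.

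On the endpoint $i=n$: your unease is well founded, but the patch you sketch is aimed at the wrong target. By \eqref{gihigher} one has $g_{n+2}/g_{n+1}=(g_2/g_1)\twist$, so the same evaluation at $-\Xi$ yields $a_n=-\,(g_2/g_1)\twist\big|_{-\Xi}$, a Frobenius-twisted quantity consistent with $a_n=b_n^q$ from Proposition \ref{P:coefficientidentities}, and in general \emph{not} equal to the displayed expression at $i=n$; indeed, in Example \ref{E:easyexample} the value $-\,(g_2/g_1)\twist\big|_{-\Xi}$ reproduces the stated $a_2$, whereas $2\eta/(\theta-t([2]V\twist))$ does not. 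Note also that the paper's own proof invokes \eqref{gquotient} only for $k=i+1\le n$, so it too establishes the formula only for $1\le i\le n-1$ (the coefficients that actually appear in $d[\theta]$ and in Theorem \ref{T:Period}). So your proof covers exactly the same ground as the paper's; rather than trying to force the clean formula at $i=n$ via the dual motive, you should read the corollary as a statement about $a_1,\dots,a_{n-1}$.
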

\begin{proof}
Dividing both sides of the first equation from Proposition \ref{P:definingequations} by $g_{i+1}$ and evaluating at the point $-\Xi$ gives
\[a_i = -\frac{g_{i+2}}{g_{i+1}}\bigg|_{-\Xi}.\]
Using expression \eqref{gquotient} for $k=i+1$ we find
\[ -\frac{g_{i+2}}{g_{i+1}}\bigg|_{-\Xi} = \frac{2\eta + c_1\theta + c_3}{\theta - t([i]V\twist + [n-i]V)}.\]
\end{proof}

\begin{remark}
In order to get formulas for $y_i$ and $z_i$ one can equate the coordinates on both sides of the identity
\[\rho\on_{\eta^2 + c_1\eta \theta + c_3\theta} = \rho\on_{\theta ^3 + c_2\theta^2 + c_4\theta + c_6}\]
and solve for the coefficients $y_i$ and $z_i$ in terms of $a_i$.  We do not use this fact going forward, and thus we omit the details.
\end{remark}

Define the operator
\[M_\tau :=N_1 + E_1\tau,\]
where we recall the definition of the matrices $N_i$ and $E_i$ from \eqref{Nidef}.  Denote the diagonal matrix
\begin{equation}\label{Mm}
M_m := \text{diag}(z_1-a_2,z_2-a_3, \dots , z_{n-1} - a_n, z_n - a_1\twist),
\end{equation}
where $a_i$ and $z_i$ are the constants from Proposition \ref{P:definingequations} and denote the diagonal matrix of functions in $H[t,y]$
\begin{equation}\label{Mdelta}
M_\delta:= \text{diag}(\delta_1,\delta_2, \dots , \delta_n).
\end{equation}
\begin{proposition}\label{P:OperatorDecomp}
We have the operator decomposition
\[(G - E_1\tau) = M_\delta\inv (D_y - (M_\tau + M_m) D_t).\]
\end{proposition}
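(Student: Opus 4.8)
The plan is to verify the operator identity coordinate by coordinate, using the explicit descriptions of all the operators involved: $G - E_1\tau$ from \eqref{GE1operator}, $D_t = \rho\on_t - t$ and $D_y = \rho\on_y - y$ with $\rho\on_t$ and $\rho\on_y$ from \eqref{taction} and \eqref{yaction}, and $M_\tau$, $M_m$, $M_\delta$ from their definitions above. The key structural observation is that both sides, when applied to a test vector, produce a vector whose entries are $H(t,y)$-linear combinations of the entries of the vector and its twist; so it suffices to check that the corresponding matrix (with $\tau$-part) coefficients agree. Equivalently, since $M_\delta$ is a diagonal matrix of the linear forms $\delta_k$, I would multiply through on the left by $M_\delta$ and show
\[
M_\delta(G - E_1\tau) = D_y - (M_\tau + M_m)D_t,
\]
which avoids inverting $M_\delta$ and keeps everything polynomial.

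First I would write out the left-hand side: $M_\delta G$ has diagonal entries $\delta_k \cdot (g_{k+1}/g_k) = \nu_k$ (by \eqref{gquotient}) and superdiagonal entries $-\delta_k$, while $M_\delta E_1\tau$ contributes $\delta_n$ (times $\tau$) in the lower-left corner; so the left side is an explicit matrix with entries $\nu_k$ on the diagonal, $-\delta_k$ on the first superdiagonal, and $-\delta_n\tau$ in the corner. Next I would expand the right-hand side. Here $D_y = N_\eta + (\eta - y)I + E_\eta\tau$ and $D_t = N_\theta + (\theta - t)I + E_\theta\tau$, where $N_\theta, N_\eta$ are the strictly-upper-triangular nilpotent parts from \eqref{Nthetadef}. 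Then $(M_\tau + M_m)D_t$ splits: the $M_m D_t$ part is a diagonal-times-$D_t$ computation, and the $M_\tau D_t = (N_1 + E_1\tau)D_t$ part requires using $\tau M = M\twist\tau$ to push the $\tau$ past the constant matrix $D_t$, producing a shift of indices and a Frobenius twist on the coefficients $a_i$, $z_i$, etc. Collecting terms, the constant part of the right side should reproduce the linear forms $\nu_k$ (using $\nu_k = y - \eta - m_k(t - \theta)$ and the relation between $m_k$, the $a_i$'s, and the $y_i, z_i$'s), the superdiagonal part should give $-\delta_k = -(t - t([k-1]V\twist + [n-(k-1)]V))$, and the $\tau$-part should collapse to exactly $-\delta_n\tau$ in the corner, with all other $\tau$-contributions canceling.

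The main obstacle is the bookkeeping in the $\tau$-component: the matrices $E_\theta$, $E_\eta$, $M_\tau$ each carry a $\tau$, so the product $(M_\tau + M_m)D_t$ has $\tau$-terms coming both from $M_\tau \cdot (\text{constant part of }D_t)$ and from $(\text{constant part of }M_\tau + M_m)\cdot E_\theta\tau$, and one must check these conspire to cancel against the $E_\eta\tau$ term of $D_y$ except in the single corner entry. This is where the precise choice $M_m = \mathrm{diag}(z_1 - a_2, \dots, z_n - a_1\twist)$ is forced, and verifying the cancellation amounts to the collinearity/slope identities on $E$ relating the $m_k$ to the coefficients $a_i, y_i, z_i$ — essentially the same Weierstrass-equation manipulation underlying Proposition \ref{P:definingequations} and Corollary \ref{C:aivalue}. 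Once the diagonal (degree-$3$ forms $\nu_k$), superdiagonal (degree-$2$ forms $\delta_k$), and $\tau$ (corner $\delta_n$) blocks are each matched, the identity follows by $\F_q$-linearity of all operators involved, and one divides by $M_\delta$ to recover the stated form. I would relegate the explicit entry-wise slope computations to the reader, as the paper does for the analogous Propositions \ref{P:definingequations} and \ref{P:coefficientidentities}.
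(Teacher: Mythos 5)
Your overall skeleton matches the paper's: you clear denominators and aim to show $M_\delta(G-E_1\tau)=D_y-(M_\tau+M_m)D_t$ by expanding the right-hand side into a constant part plus a $\tau$-part and matching it against the matrix with $\nu_k$ on the diagonal, $-\delta_k$ on the superdiagonal, and a single corner $\tau$-entry. The paper's proof does exactly this expansion (its matrix $M'=M_1'+M_2'\tau$). The problem is the step you wave at: matching the entries requires the precise identities $m_k=z_k-a_{k+1}$ for $k\le n-1$, $m_n=z_n-a_1\twist$, and $\delta_k=t-\theta+y_k-a_km_k$, i.e.\ expressions for the slope $m_k$ and the linear form $\delta_k$ of \eqref{gquotient} in terms of the structure constants $a_i,y_i,z_i$. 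You attribute these to ``collinearity/slope identities on $E$ --- essentially the same Weierstrass-equation manipulation underlying Proposition \ref{P:definingequations} and Corollary \ref{C:aivalue}'', but neither result contains them: Proposition \ref{P:definingequations} only proves the \emph{existence} of the constants by divisor/degree considerations, and Corollary \ref{C:aivalue} evaluates $g_{i+2}/g_{i+1}$ at $-\Xi$; no relation between $m_k$ and $a_i,y_i,z_i$ is available at that point. Since these identities are precisely the content of the proposition (they are what forces $M'$ to be $M_\delta(G-E_1\tau)$ rather than some other matrix), asserting them is a genuine gap, not deferred bookkeeping.

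The paper closes this gap with a short elimination argument that your sketch is missing: setting $\bg=(g_1,\dots,g_n)^\top$, the relations of Proposition \ref{P:definingequations} together with the convention \eqref{gihigher} say $t\bg=d[\theta]\bg+E_\theta f^n\bg\twist$ and $y\bg=d[\eta]\bg+E_\eta f^n\bg\twist$; feeding these into the expanded operator gives $M_1'\bg+M_2'f^n\bg\twist=0$, and reading off the $k$th coordinate (equivalently, eliminating $g_{k+2}$ and $g_{k+3}$ between the $t$- and $y$-recursions) yields
\begin{equation*}
\frac{g_{k+1}}{g_k}=\frac{y-\eta-(z_k-a_{k+1})(t-\theta)}{t-\theta+y_k-a_k(z_k-a_{k+1})},
\end{equation*}
which, compared with the unique presentation \eqref{gquotient} of $g_{k+1}/g_k$ as $\nu_k/\delta_k$, \emph{derives} $m_k=z_k-a_{k+1}$ and $\delta_k=t-\theta+y_k-a_km_k$ (and similarly for the corner entry involving $f^ng_1\twist/g_n$). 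With that step inserted, your entrywise matching goes through and is essentially the paper's proof; without it, the decisive cancellation is assumed. A small additional point to check in your expansion: the product $M_\tau D_t$ produces an $E_1E_\theta\twist\tau^2$ term, which you should note vanishes (the relevant row/column structure of $E_1$ and $E_\theta$), so that the right-hand side really is of the form ``constant $+$ single $\tau$-matrix''.
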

\begin{proof}
We first compute using the definitions \eqref{taction} and \eqref{yaction} and the definitions given above that
\begin{equation}\label{M'def}
D_y - M_\tau D_t - M_m D_t = M',
\end{equation}
where
\[M' := M_1'+M_2'\tau := 
\left (\begin{smallmatrix}
\eta - y - (\theta - t)(z_1 - a_2) & y_1 - (\theta - t) - a_1(z_1 - a_2) & 0 & \hdots & 0\\
0 & \eta - y - (\theta - t)(z_2 - a_3) & y_2 - (\theta - t) - a_2(z_2 - a_3) & \hdots & 0 \\
0 & 0 & \eta - y - (\theta - t)(z_3 - a_4) &  \hdots & 0\\
\vdots & \vdots &  \vdots & \ddots & \vdots \\
0 & 0 & 0 & \hdots & y_{n-1} - (\theta - t) - a_{n-1}(z_{n-1} - a_n)  \\
0 & 0 & 0 & \hdots & \eta - y - (\theta - t)(z_n - a_1\twist)   
\end{smallmatrix}\right )\]
\[
 + \left (\begin{smallmatrix}
0 & 0 & 0 & \hdots & 0\\
0 & 0 & 0 & \hdots & 0 \\
0 & 0 & 0 & \hdots & 0\\
\vdots & \vdots &  \vdots & \ddots & \vdots \\
y_n - (\theta - t) - a_n(z_n - a_1\twist) & 0 & 0 & \hdots & 0
\end{smallmatrix}\right )\tau.\]
If we define $\bg := (g_1, \dots, g_n)^\top$, then by Proposition \ref{P:definingequations} we observe that
\[d[\theta]\bg + E_\theta f^n\bg\twist = 0 \quad \text{and} \quad d[\eta]\bg + E_\eta f^n\bg\twist.\]
Then from \eqref{M'def}, we observe that
\begin{equation}\label{bgidentity}
M_1'\bg + M_2' f^n \bg\twist = 0.
\end{equation}
Examining the coordinates of the above equation gives the equations for $1\leq k\leq n-1$
\begin{equation}\label{gquotientexpression}
\frac{g_{k+1}}{g_k} = \frac{y-\eta - (\theta - t)(z_k-a_{k+1})}{t - \theta + y_k - a_k(z_k - a_{k+1})},
\end{equation}
and
\[\frac{f^n g_{1}\twist}{g_n} = \frac{y-\eta - (\theta - t)(z_n-a_{1}\twist)}{t - \theta + y_n - a_n(z_n - a_1\twist)}.\]
Comparing these formulas with the notation established in \eqref{gquotient} shows that for $1\leq k\leq n-1$
\[m_k = z_k - a_{k+1} \quad \text{and} \quad \delta_k = t - \theta + y_k - a_km_k\]
and
\[m_n = z_n - a_1\twist \quad \text{and} \quad \delta_n = t - \theta + y_n - a_nm_n.\]
With these observations, we then identify
\[M' = M_\delta(G - E_1\tau),\]
so that
\[(G - E_1\tau) = M_\delta\inv (D_y - (M_\tau + M_m) D_t).\]
\end{proof}

\begin{remark}
Note the similarity of this decomposition to that in \cite[Prop. 4.1]{GP16}.
\end{remark}

Althought we do not use the following corollary in the present paper, it is used in the concurrent paper \cite{GreenZeta17}, and so we record it here.

\begin{corollary}\label{C:OperatorDecomposition2}
Define the following matrices
\[M_1 = M_1'\big|_{t=0,y=0}\quad \text{and}\quad M_2 = M_2'\big|_{t=0,y=0},\]
with $M_1'$ and $M_2'$ as in \eqref{M'def}.  Then
\[\rho\on_y - (M_\tau + M_m) \rho\on_t = M_1 + M_2\tau.\]
\end{corollary}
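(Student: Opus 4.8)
The plan is to obtain the identity almost for free from the computation already carried out in the proof of Proposition~\ref{P:OperatorDecomp}. There it is shown in \eqref{M'def} that
\[
  M' = M_1' + M_2'\tau = D_y - (M_\tau + M_m)D_t,
\]
where $D_t = \rho\on_t - t$ and $D_y = \rho\on_y - y$ as operators on $\TT[y]^n$. Since $\bA$ is the fixed ring of twisting on $\TT[y]$, the elements $t$ and $y$ are central in $\Mat_n(\TT[y])[\tau]$, so I can expand and regroup: substituting the definitions of $D_t$ and $D_y$ and collecting terms gives
\[
  \rho\on_y - (M_\tau + M_m)\rho\on_t = M' + yI - (M_\tau + M_m)\,t.
\]

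The next step is to observe that the left-hand side of this last display lies in $\Mat_n(H)[\tau]$; in particular its coefficient matrices involve neither $t$ nor $y$. This is read off from the explicit formulas \eqref{taction} and \eqref{yaction} for $\rho\on_t$ and $\rho\on_y$, whose entries are the constants $\theta,\eta,a_i,y_i,z_i\in H$ together with $0$'s and $1$'s, combined with $M_\tau = N_1 + E_1\tau$ and the definition \eqref{Mm} of the diagonal matrix $M_m$. Consequently the left-hand side is unchanged by the specialization that sends $t\mapsto 0$ and $y\mapsto 0$ in each coefficient matrix, an operation which commutes with the formal symbol $\tau$.

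Applying that specialization to the right-hand side finishes the proof: the term $yI$ and the term $(M_\tau + M_m)\,t$ both vanish at $t=0$, $y=0$ --- note that although $M_\tau$ carries a $\tau$-component it is still multiplied by $t$, so it vanishes --- while $M' = M_1' + M_2'\tau$ becomes $M_1'\big|_{t=0,y=0} + M_2'\big|_{t=0,y=0}\,\tau = M_1 + M_2\tau$ by the definition of $M_1$ and $M_2$. Hence $\rho\on_y - (M_\tau + M_m)\rho\on_t = M_1 + M_2\tau$. I do not expect any real obstacle here; the only points needing a little care are the legitimacy of the regrouping (which rests on the centrality of $t$ and $y$) and the verification that the left-hand side genuinely has no $t$- or $y$-dependence, both of which are immediate from the explicit matrix descriptions already on hand.
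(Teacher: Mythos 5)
Your proof is correct and follows essentially the same route as the paper: starting from \eqref{M'def}, regrouping $D_t = \rho\on_t - t$ and $D_y = \rho\on_y - y$, and then specializing $t=0$, $y=0$ is exactly the paper's ``equate constant terms'' step, with the $t,y$-independence of $\rho\on_y - (M_\tau + M_m)\rho\on_t$ made explicit. No gaps; you have merely spelled out details the paper leaves implicit.
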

\begin{proof}
After multiplying both sides by $M_\delta$, the matrices in Proposition \ref{P:OperatorDecomp} have coefficients in $\overline K[t,y]$, and equating the constant terms gives the corollary.
\end{proof}

Define the function
\begin{equation} \label{omegarhoprod}
  \omega_\rho = \xi^{1/(q-1)} \prod_{i=0}^\infty \frac{\xi^{q^i}}{f^{(i)}}, \quad \xi = -\frac{m\theta -\eta}{\alpha} = -\biggl( m + \frac{\beta}{\alpha} \biggr),
\end{equation}
where $m$, $\alpha$, and $\beta$ are given in \S \ref{S:Amotives} and recall that $\omega_\rho \in \TT[y]^\times$ (see \cite[\S 4]{GP16}, for details of convergence).  Note that
\begin{equation}\label{omegarhoeq}
(\omega_\rho^n)\twist = f^n\omega_\rho^n,
\end{equation}
and thus $\omega_\rho^n \in \Omega_0$.  The function $\omega_\rho$ was originally defined for tensor powers of the Carlitz module by Anderson and Thakur in \cite[\S 2.5]{AndThak90} and was generalized to Drinfeld modules by the Papanikolas and the author in \cite{GP16}.
\begin{proposition} \label{P:Omegaprop}
The function $\omega_\rho^n$ generates $\Omega_0$ as a free $\bA$-module.
\end{proposition}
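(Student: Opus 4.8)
The plan is to show that $\Omega_0$ is a free $\bA$-module of rank $1$ with generator $\omega_\rho^n$. We already know from \eqref{omegarhoeq} that $\omega_\rho^n \in \Omega_0$, so the work is to show that every $h \in \Omega_0$ can be written uniquely as $h = \overline{a}\,\omega_\rho^n$ for some $a \in \bA$ (recall $\overline{a} = \chi(a)$ is the expression of $a$ in the variables $t, y$). Here uniqueness is immediate because $\omega_\rho^n \in \TT[y]^\times$, so $h \mapsto h/\omega_\rho^n$ is injective; the content is surjectivity onto $\bA \cdot \omega_\rho^n$.

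First I would observe that for $h \in \Omega_0$, the quotient $h/\omega_\rho^n$ lies in $\laurent{\C_\infty}{t}[y]$ (or the fraction field of $\TT[y]$) and is fixed under Frobenius twisting: indeed $(h/\omega_\rho^n)\twist = h\twist/(\omega_\rho^n)\twist = (f^n h)/(f^n \omega_\rho^n) = h/\omega_\rho^n$ using that $h\twist = f^n h$ by \eqref{Omega0def} and \eqref{omegarhoeq}. By the fact recalled in \S\ref{S:Background} (from \cite[Lem.~3.3.2]{P08}) that $\TT[y]$ has fixed ring $\bA$ under twisting — and more generally that the twist-invariants of the relevant fraction field are $\bK = \F_q(t,y)$ — we get $h/\omega_\rho^n \in \bK$. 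So it remains to promote this to membership in $\bA = \F_q[t,y]$, i.e.\ to rule out poles at $\Xi$ and poles coming from the finite places contained in $V$.

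The key step is a divisor/pole analysis using the definition $\BB = \Gamma(\cU, \cO_E(-n(V) + n(\Xi)))$ together with the product formula \eqref{omegarhoprod}. Writing $\omega_\rho$ as the convergent product $\xi^{1/(q-1)}\prod_{i\ge 0}\xi^{q^i}/f^{(i)}$ and using $\divisor(f)$ from \eqref{fdivisor}, one computes that $\omega_\rho^n$, as a section over $\cU$, has a zero of order exactly $n$ along $V$ (coming from the $i=0$ factor $1/f^n$ whose numerator pole at $V$... wait, $f$ has a pole at $V$, so $1/f^n$ has a zero at $V$) — more precisely $\omega_\rho^n$ vanishes to order $n$ at $V$, has a pole of order $n$ at $\Xi$, and the higher twists contribute zeros outside $\cU$; in short $\omega_\rho^n$ is a generator of $\Gamma(\cU, \cO_E(-n(V)+n(\Xi)))$ as a module over $\Gamma(\cU, \cO_E) = \TT_\theta[y]$. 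Hence for $h \in \Omega_0 \subset \BB$, the ratio $h/\omega_\rho^n$ has no pole on $\cU$, hence no pole at $\Xi$ (and none at the points of $V$). Combined with $h/\omega_\rho^n \in \bK$ and the fact that an element of $\bK$ regular at $\Xi$ and on $\cU$ must already be regular away from $\infty$ — since the only place of $E$ not meeting $\cU$ is $\infty$ — we conclude $h/\omega_\rho^n \in \bA$. This gives $h = \overline{a}\,\omega_\rho^n$ with $a = \iota(h/\omega_\rho^n) \in \bA$, completing the proof.

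The main obstacle I anticipate is the precise local computation at $\Xi$ and along $V$: one must verify that the infinite product $\prod_{i\ge 0}\xi^{q^i}/f^{(i)}$ really does produce exactly the divisor $n(\Xi) - n(V)$ on $\cU$ (no spurious zeros or poles landing inside the disk $\cU$), which hinges on checking that $V\twist, V\twistk{2}, \dots$ and $\Xi\twist, \Xi\twistk{2}, \dots$ all have $t$-coordinate of absolute value $> |\theta|$, i.e.\ lie outside $\cU$. This is exactly the kind of convergence/location bookkeeping already carried out for $\omega_\rho$ in \cite[\S 4]{GP16}, so I would cite those estimates and the factorization of $\divisor(f)$ rather than redo them, and then the module-generation statement over $\TT_\theta[y]$ follows formally. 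Everything else — the twist-invariance argument and the descent from $\bK$-membership to $\bA$-membership — is routine.
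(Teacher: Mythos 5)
Your proposal is correct and follows essentially the same route as the paper: divide by $\omega_\rho^n$, note the quotient is fixed under Frobenius twisting, and invoke the fixed-ring result of \cite[Lem.~3.3.2]{P08} (exactly as in \cite[Prop.~4.3]{GP16}). The paper's argument is shorter only because, $\omega_\rho^n$ being a unit of $\TT[y]$ and the points $V$, $\Xi$ and their twists lying outside the unit disk, the quotient $h/\omega_\rho^n$ already lies in $\TT[y]$, whose twist-fixed ring is $\bA$ itself, so your detour through $\bK$ and the divisor bookkeeping on $\cU$ (which is sound, and in fact also justifies $\bA\,\omega_\rho^n\subseteq\Omega_0$) can be bypassed.
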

\begin{proof}
The proof follows very similarly to the proof of \cite[Prop. 4.3]{GP16}.  One observes that since $\omega_\rho^n\in \TT[y]^\times$, for any $h\in \Omega_0$ the quotient $h/\omega_\rho^n$ is fixed under twisting and thus is in $\bA$.  We leave the details to the reader.
\end{proof}

\section{Anderson Generating Functions and Periods}\label{S:Period}
Anderson and Thakur studied the period lattice of the $n$-fold tensor power of the Carlitz module in \cite{AndThak90}, where they find succinct formulas for the last coordinate of a fundamental period.  On the other hand, Gekeler, Goss, Thakur, Papanikolas and Lutes and Papanikolas and Chang have studied the fundamental period associated to (1-dimensional) Drinfeld modules (see \cite[\S III]{Gekeler}, \cite[\S 7.10]{Goss}, \cite[Ex.~4.15]{LP13}, Thakur~\cite[\S 3]{Thakur91} and \cite{CP11}-\cite{CP12} respectively).  More recently, Papanikolas and the author studied periods of rank 1 sign-normalized Drinfeld modules in \cite{GP16} using Anderson generating functions.  This section generalizes the work of both Anderson and Thakur and of Papanikolas and the author; we develop the theory of periods of $n$-fold tensor powers of rank 1 sign-normalized Drinfeld modules.  We remark that because of the additional complexity arising from generalizing in both these directions, our methods required several new ideas, distinct from the works mentioned above.  In particular, while the residue formula presented in Proposition \ref{P:Residue} is nearly trivial in the 1-dimensional case, its proof for the $n$-dimensional case required several new technical insights to account for the higher order poles present in vector-valued Anderson generating.

We now define and study vector-valued Andreson generating functions in dimension $n$.  Such functions are used in the proof of Theorem 2.5.5 in \cite{AndThak90} for the case of tensor powers of the Carlitz module; here we define them for Anderson $\bA$-modules.  For $\bu = (u_1,...,u_n)^\top \in \C_\infty^n$ define
\begin{equation}\label{Eudef}
E_{\bu}^{\otimes n}(t) := \left ( \begin{matrix}
e_1(t)\\
\vdots\\
e_n(t)
\end{matrix}\right) :=  \sum_{i=0}^\infty \Exp_\rho\on\left (d[\theta]^{-i-1} \bu\right ) t^i,
\end{equation}
then define
\begin{equation}\label{Gudef}
G_\bu\on (t,y) := E_{d[\eta]\bu}\on(t) + (y+c_1t + c_3) E_\bu\on(t).
\end{equation}

We will shortly discuss the convergence of $E_\bu\on$ and $G_\bu\on$ as functions in Tate algebras, but before proceeding we require two brief lemmas.

\begin{lemma}\label{L:MatrixGeoseries}
Given an upper triangular matrix $M\in \Mat_n(\TT)$ with eigenvalues $\lambda_i \in \TT$, the series 
\[\sum_{i=0}^\infty M^i\]
converges with respect to $\lVert \cdot\rVert$ and equals $(I-M)\inv$ if and only if $|\lambda_i|<1$ for all $1\leq i\leq n$.
\end{lemma}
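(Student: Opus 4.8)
The plan is to reduce to the scalar case by upper-triangularity and then invoke the standard geometric series criterion for a complete normed ring. First I would observe that since $M$ is upper triangular, so is $M^i$ for every $i\geq 0$, and the diagonal entries of $M^i$ are exactly $\lambda_1^i,\dots,\lambda_n^i$. More precisely, writing $M = \Lambda + N$ with $\Lambda = \mathrm{diag}(\lambda_1,\dots,\lambda_n)$ and $N$ strictly upper triangular (hence nilpotent with $N^n = 0$), one expands $M^i = (\Lambda+N)^i$ as a sum of words in $\Lambda$ and $N$; since $N^n=0$, each surviving word has at most $n-1$ factors equal to $N$, and the $(j,k)$ entry of $M^i$ is a finite sum (with at most $\binom{i}{\leq n-1}$-many terms, a polynomial in $i$ of degree $\leq n-1$) of products of $\lambda$'s and the fixed entries of $N$. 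The key quantitative point is that the $(j,k)$ entry of $M^i$ has Gauss norm bounded by $C\, i^{\,n-1}\,\mu^{\,i}$ for a constant $C$ depending only on $M$ and $n$, where $\mu := \max_j \lVert\lambda_j\rVert$; this is because each word contributing to the $(j,k)$ entry uses exactly $i$ factors, at least $i-(n-1)$ of which are diagonal $\lambda$'s, and the nonarchimedean multiplicativity of $\lVert\cdot\rVert$ on $\TT$ bounds its norm by $(\max_j\lVert\lambda_j\rVert)^{\,i-(n-1)}\cdot(\text{const})$.

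Next I would treat the two directions. For the ``if'' direction, suppose $|\lambda_j|<1$ for all $j$, i.e. $\mu<1$. Then the bound $\lVert (M^i)_{j,k}\rVert \leq C\, i^{\,n-1}\mu^{\,i}$ shows $\lVert M^i\rVert \to 0$, and since $\sum_i i^{\,n-1}\mu^{\,i}$ converges (a nonarchimedean series converges iff its terms tend to $0$, so it suffices that $i^{\,n-1}\mu^i\to 0$, which holds because $\mu<1$), the partial sums $S_m = \sum_{i=0}^m M^i$ form a Cauchy sequence in $\Mat_n(\TT)$ with respect to $\lVert\cdot\rVert$; by completeness of $\TT$ (noted in \S\ref{S:Background}) the limit $S := \sum_{i=0}^\infty M^i$ exists in $\Mat_n(\TT)$. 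A telescoping computation gives $(I-M)S_m = I - M^{m+1}$, and letting $m\to\infty$ (using $\lVert M^{m+1}\rVert\to 0$ and continuity of matrix multiplication with respect to $\lVert\cdot\rVert$) yields $(I-M)S = I$; similarly $S(I-M)=I$, so $S = (I-M)\inv$. For the ``only if'' direction, suppose $\sum_i M^i$ converges with respect to $\lVert\cdot\rVert$. Then $\lVert M^i\rVert\to 0$, so in particular each diagonal entry $\lambda_j^i = (M^i)_{j,j}$ satisfies $\lVert\lambda_j^i\rVert = \lVert\lambda_j\rVert^i \to 0$, which forces $\lVert\lambda_j\rVert<1$, i.e. $|\lambda_j|<1$, for every $j$.

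The main obstacle is the book-keeping in the first paragraph: getting a clean polynomial-in-$i$ bound on $\lVert M^i\rVert$ when $M$ is not diagonal. The honest way is the $\Lambda+N$ decomposition together with $N^n=0$, which caps the combinatorial complexity of each entry; once that bound is in hand, everything else is the routine fact that a series in a complete nonarchimedean normed ring converges precisely when its terms go to zero, combined with the telescoping identity $(I-M)\sum_{i=0}^m M^i = I - M^{m+1}$. One should also note at the outset that $I-M$ makes sense and the claimed formula is the expected one, and that $\lVert\cdot\rVert$ here denotes the Gauss norm on $\Mat_n(\TT)$ extending $\lVert\cdot\rVert_1$ on $\TT$ entrywise via the matrix seminorm conventions of \S\ref{S:Background}; no subtlety arises from the seminorm failing to be multiplicative because we only ever use the submultiplicative inequality $\lVert MN\rVert\leq\lVert M\rVert\cdot\lVert N\rVert$ and the genuine multiplicativity of $\lVert\cdot\rVert$ on the commutative ring $\TT$ for the scalar entries.
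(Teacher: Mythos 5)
Your proposal is correct and follows essentially the same route as the paper: the decomposition $M=\Lambda+N$ with $N$ strictly upper triangular and $N^n=0$, the resulting bound showing $\lVert M^i\rVert\to 0$ exactly when all $|\lambda_j|<1$, and the standard geometric-series/telescoping argument in the complete nonarchimedean setting. You simply supply more detail than the paper's sketch (the explicit word-counting bound, the telescoping identity, and reading off the ``only if'' direction from the diagonal entries $\lambda_j^i$), all of which is sound.
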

\begin{proof}
This is essentially a standard result from linear algebra, so we only sketch the proof.  We write $M = D + N$ where $D$ is the diagonal matrix consisting of eigenvalues and $N$ is a strictly upper triangular matrix.  Then we write $M^i =(D+N)^i$ and expand $(D+N)^i$ to find that any term with $n$ or more copies of $N$ vanishes.  Thus $\lVert M^i \rVert\to 0$ as $i\to 0$ if and only if $|\lambda_i|<1$.
\end{proof}

\begin{lemma}\label{L:Coordregular}
The coordinates of the matrix
\[(d[\eta] - y)\left (d[\theta] -   t \right )\inv,\]
are regular at $\Xi$, where $d:\bA \to \Mat_n(H)$ is the ring homomorphism from \S \ref{S:Amodules}.
\end{lemma}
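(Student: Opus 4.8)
The plan is to exploit that $d$ is a ring homomorphism, so that $d[\theta]$ and $d[\eta]$ commute, and that $d[\theta]=\theta I+N_\theta$ is a cyclic (non-derogatory) matrix. First I would record that $N_\theta=d[\theta]-\theta I$ is regular nilpotent: by Corollary~\ref{C:aivalue} the superdiagonal entries $a_1,\dots,a_{n-1}$ are nonzero, so the $(1,n)$ entry of $N_\theta^{n-1}$ equals $a_1a_2\cdots a_{n-1}\neq 0$, whence $N_\theta^{n-1}\neq 0=N_\theta^n$ and the minimal polynomial of $d[\theta]$ is $(X-\theta)^n$. Its centralizer in $\Mat_n(H)$ is then $H[d[\theta]]$, so since $d[\eta]$ commutes with $d[\theta]$ we may write $d[\eta]=p(d[\theta])$ for a unique $p\in H[X]$ with $\deg p\le n-1$; comparing the single eigenvalue on each side gives $p(\theta)=\eta$.

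The crux is the claim that $y-p(t)$ vanishes at $\Xi$ to order at least $n$ with respect to the uniformizer $t-\theta$. To prove this I would apply the identity $d[\eta]=p(d[\theta])$ to the column vector $\bg:=(g_1,\dots,g_n)^\top$ built from the functions of \eqref{gidivisor}. By Proposition~\ref{P:definingequations} (equivalently, the identities used in the proof of Proposition~\ref{P:OperatorDecomp}) one has $d[\theta]\bg=t\bg-E_\theta f^n\bg\twist$ and $d[\eta]\bg=y\bg-E_\eta f^n\bg\twist$. Since $f$ has a simple zero at $\Xi$ by \eqref{fdivisor} while each $g_i\twist$ has its pole at $V\twist\neq\Xi$ and so is regular at $\Xi$, every coordinate of $E_\theta f^n\bg\twist$ and of $E_\eta f^n\bg\twist$ vanishes at $\Xi$ to order at least $n$. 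A short induction — using that multiplication by the constant matrix $d[\theta]$ and by the regular function $t$ each preserve the property ``all coordinates vanish at $\Xi$ to order $\ge n$'' — then gives $d[\theta]^k\bg\equiv t^k\bg$ modulo such vectors, hence $p(d[\theta])\bg\equiv p(t)\bg$ and $d[\eta]\bg\equiv y\bg$. Subtracting, every coordinate of $(y-p(t))\bg$ vanishes at $\Xi$ to order $\ge n$; since $g_1(\Xi)\neq 0$ (by \eqref{gidivisor} the divisor of $g_1$ is supported at $V,\infty,[n]V$, all distinct from $\Xi$), we conclude $\ord_\Xi(y-p(t))\ge n$.

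It then remains to run a polynomial division. Writing $p(X)-y=(X-t)q(X)+(p(t)-y)$ in $H(t,y)[X]$ with $q(X)=\tfrac{p(X)-p(t)}{X-t}\in H[t][X]$ and substituting $X=d[\theta]$, I get $d[\eta]-y=(d[\theta]-t)\,q(d[\theta])+(p(t)-y)I$; right-multiplying by $(d[\theta]-t)\inv$ and using that $q(d[\theta])$ commutes with $d[\theta]-t$ yields
\[
(d[\eta]-y)(d[\theta]-t)\inv=q(d[\theta])+(p(t)-y)(d[\theta]-t)\inv .
\]
Here $q(d[\theta])$ has entries in $H[t]$, hence is regular at $\Xi$; and from $d[\theta]-t=-(t-\theta)I+N_\theta$ we get $(d[\theta]-t)\inv=-\sum_{k=0}^{n-1}N_\theta^k/(t-\theta)^{k+1}$, whose entries have a pole at $\Xi$ of order at most $n$, so multiplying by $p(t)-y$ (which vanishes at $\Xi$ to order $\ge n$) makes the second term regular at $\Xi$ as well. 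This gives the lemma.

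The hard part will be the vanishing claim $\ord_\Xi(y-p(t))\ge n$: organizing the induction cleanly and verifying the stability of ``vanishing to order $\ge n$ at $\Xi$'' under the relevant operations, together with the routine but necessary check that $g_1(\Xi)\neq 0$ (which rests on $\Xi$ being distinct from $V$ and $[n]V$, as in the paper's setup).
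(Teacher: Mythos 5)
Your route is genuinely different from the paper's. The paper's proof is a short algebraic manipulation with the Weierstrass equation: writing (in the simplified model $y^2=t^3+At+B$) $(d[\eta]-y)(d[\theta]-t)\inv=(d[\eta^2]-y^2)(d[\eta]+y)\inv(d[\theta]-t)\inv$, noting that $d[\eta^2]-y^2=(d[\theta^3]-t^3)+A(d[\theta]-t)$ has $d[\theta]-t$ as a (commuting) factor, and ending with a polynomial in $d[\theta],t$ times $(d[\eta]+y)\inv$, which is regular at $\Xi$ because its determinant there is a power of $2\eta+c_1\theta+c_3\neq 0$ --- a quantity nonvanishing for every smooth $E$ since $\Xi$ is a generic point. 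Your middle and final steps are correct and rather nice: the relations $d[\theta]\bg=t\bg-E_\theta f^n\bg\twist$ and $d[\eta]\bg=y\bg-E_\eta f^n\bg\twist$, the order-$\geq n$ vanishing of $f^n\bg\twist$ at $\Xi$, the resulting bound $\ord_\Xi(y-p(t))\geq n$, and the division $d[\eta]-y=(d[\theta]-t)q(d[\theta])+(p(t)-y)I$ all check out, granted the two nonvanishing claims you invoke.

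Those two claims are where the gap is. You assert from Corollary \ref{C:aivalue} that $a_1,\dots,a_{n-1}\neq 0$, so that $d[\theta]$ is non-derogatory and $d[\eta]\in H[d[\theta]]$; and you need $g_1(\Xi)\neq 0$, i.e.\ $\Xi\neq [n]V$. Neither is proved, and both can fail. Indeed $a_i=0$ exactly when $[i]V\twist+[n-i]V=\infty$, i.e.\ $nV=i\Xi$; applying $\Fr$ and using $V\twist=V-\Xi$, this is equivalent to $[n-i]=-[i]\Fr$ in $\End(E)$, which happens precisely when $E$ is supersingular with $\Fr=[-\sqrt q]$ and $n=i(1+\sqrt q)$. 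Concretely, for $E:y^2+y=t^3$ over $\F_4$ one has $\Fr=[-2]$, so for $n=3$ one gets $a_1=0$, and moreover $\Xi=[3]V$, so $g_1(\Xi)=0$ as well. In such a case $N_\theta^{n-1}=0$, the centralizer of $d[\theta]$ is strictly larger than $H[d[\theta]]$, and commutation alone no longer gives $d[\eta]=p(d[\theta])$, so your very first step collapses. (The paper itself implicitly assumes $a_1\cdots a_{n-1}\neq 0$ in Theorem \ref{T:Period}, but the lemma is stated and used for arbitrary $E/\F_q$, and the paper's proof covers that generality.) Your argument is complete whenever $\Fr_q\notin\Z$ --- e.g.\ ordinary $E$, or $q$ not a square --- but to close the gap you would need to handle these supersingular cases separately or establish $d[\eta]\in H[d[\theta]]$ by other means; the Weierstrass-equation trick of the paper sidesteps the issue entirely.
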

\begin{proof}
For ease of exposition in this proof we will assume that the elliptic curve $E$ has the simplified Weierstrass equation $E:\, y^2 = t^3 + At + B$ for $A,B\in \F_q$.  The lemma holds for the more general Weierstrass equation \eqref{ecequation} and we leave the extra details to the reader.  Observe using the simplified Weierstrass equation together with the fact that $d:\bA \to \Mat_n(H)$ is a ring homomorphism that
\begin{align*}
(d[\eta] - y)\left (d[\theta] -   t \right )\inv &= (d[\eta] - y)(d[\eta]+y)(d[\eta]+y)\inv\left (d[\theta] -   t \right )\inv \\
&= (d[\eta^2] -y^2)(d[\eta]+y)\inv\left (d[\theta] -   t \right )\inv\\
&= ((d[\theta^3]-t^3) +A(d[\theta]-t) )(d[\eta]+y)\inv\left (d[\theta] -   t \right )\inv\\
& = ((d[\theta^2] + td[\theta]-t^2) + A)(d[\eta]+y)\inv,
\end{align*}
where in the last equality we factored out $(d[\theta] - t)$ and canceled.  Note that $(d[\eta]+y)\inv$ and $(d[\theta^2] + td[\theta]-t^2) + A$ are coordinate-wise regular at $\Xi$ and thus so is $(d[\eta] - y)\left (d[\theta] -   t \right )\inv$.  
\end{proof}

For the case of $n=1$ and $A=\F_q[\theta]$, El-Guindy and Papanikolas show that Anderson generating functions are in $\TT$ and prove they have a meromorphic continuation to $\C_\infty$ in \cite{EP14}.  We give a similar theorem for $E_\bu\on$ and $G_\bu\on$.

\begin{proposition}\label{P:Euspace}
For $\bu\in \C_\infty^n$, the function $E_\bu\on \in \TT^n$ and we have the following identity of functions in $\TT^n$
\[E_\bu\on(t) = \sum_{j=0}^\infty Q_j\left (d[\theta]\twistk{j} -   tI \right )\inv\bu\twistk{j},\]
where $Q_i$ are the coefficients of $\Exp_\rho\on$ from \eqref{Exptensor}. Further, the function $G_\bu\on$ extends to a meromorphic function on  $U = (\C_\infty \times_{\F_q} E) \setminus \{ \infty \}$ with poles in each coordinate only at the points $\Xi\twisti$ for $i\geq 0$.
\end{proposition}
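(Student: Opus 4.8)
The plan is to separate the statement into three pieces: (i) the closed-form identity for $E_\bu\on$ as a sum involving the coefficients $Q_j$ of $\Exp_\rho\on$ and the resolvent $(d[\theta]\twistk{j} - tI)\inv$; (ii) convergence of this sum, showing $E_\bu\on \in \TT^n$; and (iii) the meromorphic continuation of $G_\bu\on$ to $U$ with the stated poles. For (i), I would start from the definition \eqref{Eudef}, substitute the power series $\Exp_\rho\on(\bz) = \sum_j Q_j \bz\twistk{j}$, and interchange the two sums. The inner sum over $i$ becomes, for each fixed $j$, a geometric-type series $\sum_{i\geq 0} Q_j (d[\theta]\twistk{j})^{-i-1}\bu\twistk{j} t^i$, using that twisting is $\F_q[t]$-linear so $t$ passes through and $(d[\theta]^{-i-1})\twistk{j} = (d[\theta]\twistk{j})^{-i-1}$. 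Summing the geometric series formally gives $Q_j (d[\theta]\twistk{j})^{-1}(I - t (d[\theta]\twistk{j})^{-1})^{-1}\bu\twistk{j} = Q_j(d[\theta]\twistk{j} - tI)\inv \bu\twistk{j}$, which is the claimed identity; Lemma \ref{L:MatrixGeoseries} is exactly the tool that justifies this, since $d[\theta]\twistk{j} = \theta^{q^j} I + N_\theta\twistk{j}$ is upper triangular with all eigenvalues $\theta^{q^j}$, so $(d[\theta]\twistk{j})^{-1}$ has eigenvalues $\theta^{-q^j}$ of absolute value $<1$ as elements of $\TT$ (indeed of $\C_\infty$), making $I - t(d[\theta]\twistk{j})^{-1}$ invertible with the stated inverse.

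For (ii), the point is that the rearranged series converges in $\TT^n$. Here I would use the entireness of $\Exp_\rho\on$: the coefficients $Q_j$ satisfy $|Q_j| \to 0$ faster than any geometric rate (more precisely $\log|Q_j|/q^j \to -\infty$, the standard growth estimate for entire $\F_q$-linear functions coming from Anderson $\bA$-modules). Since $(d[\theta]\twistk{j} - tI)\inv$ has entries that are power series in $t$ with Gauss norm controlled by a fixed power of $|\theta|^{-q^j}$ (expand $(d[\theta]\twistk{j} - tI)\inv = \sum_{k\geq 0} t^k (d[\theta]\twistk{j})^{-k-1}$ and bound using the nilpotence of $N_\theta$ as in Lemma \ref{L:MatrixGeoseries}), and $|\bu\twistk{j}| = |\bu|^{q^j}$ grows only geometrically, the $j$th term has Gauss norm bounded by $|Q_j|\cdot C^{q^j}$ for a constant $C$ depending on $\theta$ and $\bu$; the super-geometric decay of $|Q_j|$ dominates, giving convergence in the Banach algebra $\TT^n$ (and in fact showing $E_\bu\on$ extends to $\TT_c^n$ for every $c$, i.e., is entire, though we only claim membership in $\TT^n$ here).

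For (iii), I would use the decomposition \eqref{Gudef}, $G_\bu\on = E_{d[\eta]\bu}\on + (y + c_1 t + c_3) E_\bu\on$, together with the identity from part (i). Grouping the two pieces under a common summand index $j$ gives $G_\bu\on = \sum_j Q_j\big[(d[\theta]\twistk{j} - tI)\inv d[\eta]\twistk{j} + (y + c_1 t + c_3)(d[\theta]\twistk{j} - tI)\inv\big]\bu\twistk{j}$, and using that $d$ is a ring homomorphism the bracket is $Q_j (d[\theta]\twistk{j} - tI)\inv (d[\eta]\twistk{j} + (y + c_1 t + c_3) I)\bu\twistk{j}$. Now $d[\eta]\twistk{j} + (y + c_1 t + c_3)I = d[\eta]\twistk{j} + y I + (c_1 t + c_3) I$, and on the elliptic curve $(y + c_1 t + c_3)$ is, up to sign, the conjugate of $y$ under the hyperelliptic involution; the key observation, which is the content of Lemma \ref{L:Coordregular}, is that $(d[\eta]\twistk{j} + yI + \cdots)(d[\theta]\twistk{j} - tI)\inv$ is regular at the $j$th twist $\Xi\twistk{j} = (\theta^{q^j},\eta^{q^j})$ — the apparent pole of $(d[\theta]\twistk{j} - tI)\inv$ where $t = \theta^{q^j}$ is cancelled after multiplying by the numerator, exactly by the factorization worked out in the proof of Lemma \ref{L:Coordregular} (adapted to the $j$th twist). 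Hence each summand is a rational function on $E$ over $\C_\infty$ whose only pole on $U$ is at $\Xi\twistk{j}$ (the pole of $(d[\theta]\twistk{j} - tI)\inv$ at the point where $t = \theta^{q^j}$ but $y = -\eta^{q^j} - c_1\theta^{q^j} - c_3$, i.e., at $\Xi\twistk{j}$ after the cancellation leaves a pole only at the single point, not both preimages of $t = \theta^{q^j}$). Since the series converges locally uniformly on affinoid subdomains of $U$ away from the set $\{\Xi\twistk{j} : j \geq 0\}$ (same super-geometric estimate as in (ii), now on $\TT_c[y]$ for varying $c$), the sum defines a meromorphic function on $U$ with poles only at the points $\Xi\twistk{i}$, $i \geq 0$.

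The main obstacle I anticipate is part (iii): making precise that after multiplying by the numerator $d[\eta]\twistk{j} + (y+c_1t+c_3)I$ the only surviving pole of the $j$th term is a \emph{first-order} pole at the single point $\Xi\twistk{j}$ and not a higher-order pole or a pole at the conjugate point. The resolvent $(d[\theta]\twistk{j} - tI)\inv$ has, because of the nilpotent part $N_\theta\twistk{j}$, entries with poles of order up to $n$ at $t = \theta^{q^j}$; one must check that the numerator matrix kills all but the simple pole, and that it lands on the correct sheet of $E$. This is where the explicit ring-homomorphism computation of Lemma \ref{L:Coordregular} — rewriting $(d[\eta] - y)(d[\theta] - t)\inv$ as $((d[\theta^2] + t\,d[\theta] - t^2) + A)(d[\eta] + y)\inv$ after cancelling the factor $(d[\theta] - t)$ — does the real work, and the plan is to twist that identity by $q^j$ and feed it into the summand analysis; the remaining bookkeeping (uniform convergence on affinoids, identifying the pole locus precisely) is then routine.
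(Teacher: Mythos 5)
Parts (i) and (ii) of your plan are essentially the paper's argument: expand $\Exp_\rho\on$ in \eqref{Eudef}, control $d[\theta]^{-i-1}$ via the nilpotent part of $d[\theta]$, use entireness of $\Exp_\rho\on$ to get membership in $\TT^n$ and to justify the rearrangement, then sum the geometric series by Lemma \ref{L:MatrixGeoseries}. Your continuation mechanism in (iii) (only finitely many summands contribute poles on $|t|<|\theta|^{m+1}$, the tail converging there) also matches the paper. The genuine problem is your use of Lemma \ref{L:Coordregular}. You assert that the lemma shows $\left(d[\theta]\twistk{j}-tI\right)\inv\left(d[\eta]\twistk{j}+(y+c_1t+c_3)I\right)$ is regular at $\Xi\twistk{j}$, the pole being cancelled there. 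This is false, and it is incompatible with the very statement you are proving: this matrix has a genuine pole at $\Xi\twistk{j}$, which is exactly why $G_\bu\on$ has poles at the positive twists of $\Xi$ and why $\RES_\Xi(G_\bu\on)$ is nonzero in Proposition \ref{P:Residue}. If your regularity claim held, the $j$th summand would be regular at both points of $U$ lying over $t=\theta^{q^j}$ and $G_\bu\on$ would be holomorphic on $U$. The lemma involves the numerator $d[\eta]-y$, with a minus sign; to use it here one applies the negation substitution $(t,y)\mapsto(t,-y-c_1t-c_3)$, which converts the numerator $d[\eta]\twistk{j}+(y+c_1t+c_3)I$ into $d[\eta]\twistk{j}-yI$, and then the twisted lemma gives regularity of the substituted function at $\Xi\twistk{j}$, i.e., regularity of the original summand at $-\Xi\twistk{j}$. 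The pole at $\Xi\twistk{j}$ survives; the cancellation happens at the conjugate point. Your parenthetical identifying the surviving pole as the point with $y=-\eta^{q^j}-c_1\theta^{q^j}-c_3$ has the sheets backwards: that point is $-\Xi\twistk{j}$, the one where the pole is killed.

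A smaller inaccuracy: you claim that after the cancellation only a first-order pole survives at $\Xi\twistk{j}$. The proposition does not require this, and it need not hold coordinate-wise: because of the nilpotent part of $d[\theta]$, the off-diagonal entries of $\left(d[\theta]\twistk{j}-tI\right)\inv$ have poles of order up to $n$ at $t=\theta^{q^j}$, and multiplying by the numerator does not reduce them to simple poles (in the residue computation of Proposition \ref{P:Residue} these higher-order terms survive but contribute no residue). Once the pole-location argument is corrected as above, the rest of your outline goes through and coincides with the paper's proof.
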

\begin{proof}
Writing in the definition of $\Exp_\rho\on$ from \eqref{Exptensor} and expanding gives the sum
\begin{equation}\label{Eudoublesum}
E_\bu\on(t) = \sum_{i=0}^\infty \left (\sum_{j=0}^\infty Q_j\left (d[\theta]^{-i-1} \bu \right )\twistk{j} \right ) t^i.
\end{equation}
Recall from \eqref{Nthetadef} that $d[\theta] = \theta I + N_\theta$ where $N_\theta$ is nilpotent with order $n$, so we can write
\begin{align}\label{degreeest1}
\begin{split}
\left (d[\theta]^{-i-1}  \right ) &= \left ((\theta I + N_\theta)^{-i-1}  \right )\\
&= \left (\left (\frac{1}{\theta}I - \frac{1}{\theta^2} N_\theta + \dots + \frac{(-1)^{n-1}}{\theta^n}N_\theta^{n-1}\right )^{i+1}  \right )\\
&= \left (\left [ \sum_{k_1+\dots+ k_n = i+1} \binom{i+1}{k_1,\dots,k_n}\prod_{s=1}^n \left ( \frac{1}{\theta^s} N_\theta^{s-1}\right )^{k_s}  \right ]  \right )\\
&= \left (\left (\frac{1}{\theta^{i+1}}I + d_1 \frac{1}{\theta^{i+2}} N_\theta + \dots + d_{n-1}\frac{1}{\theta^{i+n}}N_\theta^{n-1}\right ) \right ),
\end{split}
\end{align}
where in the last two line we used the multinomial theorem then collected like terms using some constants $d_i\in \F_q$.  Using the last line of \eqref{degreeest1} we find that
\begin{equation}\label{coefinequality}
\left |\sum_{j=0}^\infty Q_j\left (d[\theta]^{-i-1} \bu \right )\twistk{j}\right | \leq \max_{j}\left \{\lvert Q_j\rvert \cdot |\theta|^{-iq^j}\max_{1\leq k\leq n}\left \{\left | \frac{1}{\theta^{k}} N_\theta^{k-1}\right |\right \}^{q^j}\cdot |\bu|^{q^j}\right \},
\end{equation}
where $|\cdot|$ is the matrix seminorm defined in \S \ref{S:Background}.  Let us denote
\[N_0 = \max_{1\leq k\leq n}\left \{\left | \frac{1}{\theta^{k}} N_\theta^{k-1}\right |\right \},\]
which equals some constant independent of $i$ and $j$.  Then, the fact that $\Exp_\rho\on$ is an entire function on $\C_\infty^n$, implies that the factor
\[\lvert Q_j\rvert \cdot \max_{1\leq k\leq n}\left \{\left | \frac{1}{\theta^{k}} N_\theta^{k-1}\right |\right \}^{q^j}\cdot |\bu|^{q^j}\]
goes to zero as $j\to \infty$, and thus is bounded independent of $j$.  Thus by \eqref{coefinequality}
\[\left |\sum_{j=0}^\infty Q_j\left (d[\theta]^{-i-1} \bu \right )\twistk{j}\right |\]
goes to zero as $i\to \infty$, which proves that $E_\bu\on\in \TT^n$.  Further, using the above analysis, we find that
\[\left |Q_j\left (d[\theta]^{-i-1} \bu \right )\twistk{j}\right |\to 0,\]
as $\max(i,j)\to 0$, and thus we are allowed to rearrange the terms of the double sum \eqref{Eudoublesum} and maintain convergence in $\TT^n$ (see \cite[\S 1.2]{Robert}).

Next, observe that the eigenvalues of the matrix $d[\theta]\inv t$ are all equal to $t/\theta$, and that $\lVert t/\theta\rVert <1$, and hence by Lemma \ref{L:MatrixGeoseries} we have the geometric series identity in $\TT^n$
\[\sum_{i=0}^\infty d[\theta]^{-i-1}  t^i = \left (d[\theta]\twistk{j} -   tI \right )\inv.\]
Using this we rearrange the terms of $E_\bu\on$ to get the equality in $\TT^n$
\begin{align*}
E_\bu\on &= \sum_{j=0}^\infty Q_j\left (\sum_{i=0}^\infty d[\theta]^{-i-1}  t^i \right )\twistk{j}\bu\twistk{j}\\
&= \sum_{j=0}^\infty Q_j\left (d[\theta]\twistk{j} -   tI \right )\inv\bu\twistk{j}.
\end{align*}
Using the above equation, we see that
\begin{equation}\label{Gugeoseries}
G_\bu\on=  \sum_{j=0}^\infty Q_j\left (d[\theta]\twistk{j} -   tI \right )\inv\left (d[\eta]\twistk{j} + (y+c_1t+c_3)I\right )\bu\twistk{j} \in \TT[y]^n.
\end{equation}
We then observe, using analysis similar to that in \eqref{degreeest1}, that for any $m\geq 0$ the sum
\begin{align*}
\sum_{j=0}^\infty Q_j&\left (d[\theta]\twistk{j} -   tI \right )\inv\left (d[\eta]\twistk{j} + (y+c_1t+c_3)I\right )\bu\twistk{j} \\
&- \sum_{j=0}^m Q_j\left (d[\theta]\twistk{j} -   tI \right )\inv\left (d[\eta]\twistk{j} + (y+c_1t+c_3)I\right )\bu\twistk{j}
\end{align*}
converges for any point $(t,y)\in U$ with $|t|<|\theta|^{m+1}$, providing a meromorphic continuation of $G_\bu\on$ to $U$.  We also observe that the only possible poles in each coordinate of
\begin{equation}\label{Eupartialsum}
\sum_{j=0}^m Q_j\left (d[\theta]\twistk{j} -   tI \right )\inv\left (d[\eta]\twistk{j} + (y+c_1t+c_3)I\right )\bu\twistk{j} \in H(t,y)^n
\end{equation}
occur at $\pm\Xi\twisti$ for $i\leq m$.  We calculate that each coordinate of $G_\bu\on$ does actually have poles at the positive twists of $\Xi$ (see the proof of Proposition \ref{P:Residue} for more details).  On the other hand, under the substitution given by negation on $E$, namely $(t,y)\mapsto (t,-y-c_1t-c_3)$ we see that
\[\left (d[\theta]\twistk{j} -   tI \right )\left (d[\eta]\twistk{j} + (y+c_1t+c_3)I\right ) \mapsto \left (d[\theta]\twistk{j} -   tI \right )\left (d[\eta]\twistk{j} - yI\right ),\]
and so by Lemma \ref{L:Coordregular} we see that each coordinate of \eqref{Eupartialsum} is regular at $-\Xi\twistk{j}$ for $j\geq 0$.   Thus the meromorphic continuation described above has the correct properties.
\end{proof}

\begin{lemma}\label{L:DtofGu}
For $\bu\in \C_\infty^n$, we obtain two identities
\begin{align*}
\mathrm{(a)}\,\, D_t(G_\bu\on) &= \Exp_\rho\on(d[\eta]\bu) + (y+c_1t + c_3)\Exp_\rho\on(\bu)\\
\mathrm{(b)}\,\,D_y(G_\bu\on) &= -c_1 \Exp_\rho\on(d[\eta] \bu) + \Exp_\rho\on(d[\theta^2] \bu)+ (t+c_2)\Exp_\rho\on(d[\theta] \bu)\\
&  + (t^2 + c_2 t + c_4) \Exp_\rho\on(\bu).
\end{align*}
\end{lemma}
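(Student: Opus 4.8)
The plan is to reduce both statements to two functional identities for the vector-valued Anderson generating functions. For any $\bw\in\C_\infty^n$ I will first prove
\[
\rho\on_t(E_\bw\on) = t\,E_\bw\on + \Exp_\rho\on(\bw), \qquad \rho\on_y(E_\bw\on) = E_{d[\eta]\bw}\on .
\]
Both follow by applying the operator $\rho\on_t$ (resp.\ $\rho\on_y$) to the defining series \eqref{Eudef} coefficient by coefficient — legitimate inside $\TT^n$ by Proposition \ref{P:Euspace}, since twisting fixes $t$ — and invoking the exponential functional equation \eqref{expfunctionalequation} on each term $\Exp_\rho\on(d[\theta]^{-i-1}\bw)$, using that $d$ is a ring homomorphism so $d[\eta]$ commutes with $d[\theta]^{-1}$. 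For the first identity one reindexes the resulting shifted sum $\sum_{i\ge 0}\Exp_\rho\on(d[\theta]^{-i}\bw)t^i$ by splitting off the $i=0$ term. In terms of $D_t=\rho\on_t-t$ and $D_y=\rho\on_y-y$ these say $D_t(E_\bw\on)=\Exp_\rho\on(\bw)$ and $D_y(E_\bw\on)=E_{d[\eta]\bw}\on - y\,E_\bw\on$.

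Next I record that $D_t$ and $D_y$ are $\bA$-linear on $\TT[y]^n$: for $b\in\bA$ we have $\rho\on_t(b\bh)=b\,\rho\on_t(\bh)$ and $\rho\on_y(b\bh)=b\,\rho\on_y(\bh)$, because $\bA$ is the fixed ring of twisting on $\TT[y]$ and the constant matrices $d[\theta],d[\eta],E_\theta,E_\eta$ commute with scalar multiplication. Since $y+c_1t+c_3\in\bA$ and $G_\bu\on = E_{d[\eta]\bu}\on + (y+c_1t+c_3)E_\bu\on$, part (a) is then immediate:
\[
D_t(G_\bu\on) = D_t(E_{d[\eta]\bu}\on) + (y+c_1t+c_3)D_t(E_\bu\on) = \Exp_\rho\on(d[\eta]\bu) + (y+c_1t+c_3)\Exp_\rho\on(\bu).
\]

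For part (b), the same two ingredients first give
\[
D_y(G_\bu\on) = E_{d[\eta^2]\bu}\on + (c_1t+c_3)E_{d[\eta]\bu}\on - (y^2 + c_1ty + c_3y)E_\bu\on,
\]
after which the Weierstrass equation \eqref{ecequation} replaces $y^2+c_1ty+c_3y$ by $t^3+c_2t^2+c_4t+c_6$. It then remains to rewrite the combination $E_{d[\eta^2]\bu}\on + (c_1t+c_3)E_{d[\eta]\bu}\on - (t^3+c_2t^2+c_4t+c_6)E_\bu\on$ in terms of $\Exp_\rho\on$-values. I will do this by applying the operator form of the Weierstrass relation — valid because $\rho\on$ is an $\F_q$-algebra homomorphism by Proposition \ref{P:AndersonbA-module}, so $(\rho\on_y)^2 + c_1\rho\on_t\rho\on_y + c_3\rho\on_y = (\rho\on_t)^3 + c_2(\rho\on_t)^2 + c_4\rho\on_t + c_6$ as operators on $\TT[y]^n$ — to the single function $E_\bu\on$. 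Iterating the two identities of the first paragraph gives $(\rho\on_t)^k(E_\bu\on) = t^k E_\bu\on + \sum_{j=0}^{k-1} t^{k-1-j}\Exp_\rho\on(d[\theta^j]\bu)$, $(\rho\on_y)^2(E_\bu\on)=E_{d[\eta^2]\bu}\on$, and $\rho\on_t\rho\on_y(E_\bu\on) = t\,E_{d[\eta]\bu}\on + \Exp_\rho\on(d[\eta]\bu)$, so the left side of the operator identity applied to $E_\bu\on$ becomes $E_{d[\eta^2]\bu}\on + (c_1t+c_3)E_{d[\eta]\bu}\on + c_1\Exp_\rho\on(d[\eta]\bu)$ and the right side becomes $(t^3+c_2t^2+c_4t+c_6)E_\bu\on + (t^2+c_2t+c_4)\Exp_\rho\on(\bu) + (t+c_2)\Exp_\rho\on(d[\theta]\bu) + \Exp_\rho\on(d[\theta^2]\bu)$ (using $d[\theta^2]=d[\theta]^2$); equating and solving for the desired combination produces exactly the right-hand side of (b), which together with the previous display completes the proof.

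The computations are all elementary; the only points that need care are the bookkeeping of the cross term $c_1\rho\on_t\rho\on_y$, which is the source of the isolated summand $-c_1\Exp_\rho\on(d[\eta]\bu)$ in (b), and checking that every term-by-term manipulation and rearrangement takes place inside $\TT^n$ and $\TT[y]^n$, where convergence is guaranteed by Proposition \ref{P:Euspace}.
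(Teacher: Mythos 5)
Your proposal is correct and follows essentially the same route as the paper: both proofs rest on the identities $D_t(E_\bu\on)=\Exp_\rho\on(\bu)$ and $\rho\on_y(E_\bw\on)=E_{d[\eta]\bw}\on$ (hence part (a) is immediate), together with the Weierstrass relation to convert the $d[\eta^2]$- and $y$-terms into $t$-terms in part (b). The only difference is organizational: where the paper pushes the Weierstrass relation inside $d[\cdot]$ and cancels the resulting series term by term (details left to the reader), you carry out that cancellation by applying the operator identity $\rho\on_{y^2+c_1ty+c_3y}=\rho\on_{t^3+c_2t^2+c_4t+c_6}$ to $E_\bu\on$ along with the closed formula for $(\rho\on_t)^k(E_\bu\on)$, which is a clean and explicit way of doing the bookkeeping the paper omits.
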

\begin{proof}
First observe that 
\[
\rho\on_t(E_\bu\on) = \sum_{i=0}^\infty \rho\on_t\left (\Exp_\rho\on\left (d[\theta]^{-i-1} \bu\right )\right ) t^i = \Exp_\rho\on(\bu) + t E_\bu\on,
\]
and thus
\[D_t(E_\bu\on) = \Exp_\rho\on(\bu).\]
Part (a) of the lemma follows directly from this.  For part (b), observe that
\[\rho\on_y(E_{\bu}\on) = \sum_{i=0}^\infty\left (\Exp_\rho\on\left (d[\eta]d[\theta]^{-i-1} \bu\right )\right ) t^i,\]
and so using \eqref{ecequation}
\begin{align*}
\rho\on_y(E_{d[\eta]\bu}\on) &= \sum_{i=0}^\infty\left (\Exp_\rho\on\left (d[\eta^2]d[\theta]^{-i-1} \bu\right )\right ) t^i\\
&= \sum_{i=0}^\infty\left (\Exp_\rho\on\left (d[\theta^3 + c_2\theta^2 + c_4 \theta + c_6 -c_1 \theta\eta - c_3 \eta ]d[\theta]^{-i-1} \bu\right )\right ) t^i.
\end{align*}
Using the above equation, and the $\F_q$-linearity of $\Exp_\rho\on$, one examines the terms of $D_y(G_\bu\on)$ and finds that all but a finite number cancel and the remaining terms are exactly the right hand side of part (b) of the lemma.  We leave the details to the reader. 
\end{proof}

Define $\cM$ to be the subring of $\TT[y]$ consisting of all elements in $\TT[y]$ which have a meromorphic continuation to all of $U$.  Now define the map
\[\RES_{\Xi}:\cM^n\to \C_\infty^n,\]
for a vector of functions $(z_1(t,y),...,z_n(t,y))^\top\in \cM^n$ as
\begin{equation}\label{RESdef}
\RES_{\Xi}
\left (\begin{matrix}
z_1(t,y)  \\
\vdots  \\
z_n(t,y) \\
\end{matrix}\right )
= 
\left (\begin{matrix}
\Res_{\Xi}(z_1(t,y)\lambda)  \\
\vdots  \\
\Res_{\Xi}(z_n(t,y)\lambda) \\
\end{matrix}\right ),
\end{equation}
where $\lambda$ is the invariant differential of $E$ from \eqref{invdiff}.  We remark that in defining the maps $T$ and $\RES_{\Xi\twisti}$, we were partially inspired by ideas of Sinha in \cite[\S 4.6.6]{Sinha97}.  We now analyze the residues of the Anderson generating function $G_\bu\on$ under the map $\RES_\Xi$.

\begin{proposition}\label{P:Residue}
If we write $\bu = (u_1,...,u_n)^\top\in \C_\infty^n$, then
\[\RES_\Xi(G_\bu\on) = -(u_1,...,u_n)^\top.\]
\end{proposition}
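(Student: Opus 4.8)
The plan is to compute the residue at $\Xi$ of each coordinate $e_k(t)\,\lambda$ directly from the partial-sum expansion of $G_\bu\on$ established in Proposition \ref{P:Euspace}. The key observation is that, by the last part of Proposition \ref{P:Euspace}, in a neighbourhood of $\Xi$ the function $G_\bu\on$ differs from the single term
\[
Q_0\left(d[\theta] - tI\right)\inv\left(d[\eta] + (y+c_1t+c_3)I\right)\bu
= \left(d[\theta] - tI\right)\inv\left(d[\eta] + (y+c_1t+c_3)I\right)\bu
\]
(recall $Q_0 = I$) by a function which is regular at $\Xi$; all the higher terms $j\geq 1$ have poles only at $\Xi\twistk{j}$ for $j\geq 1$, which are distinct from $\Xi$. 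So $\RES_\Xi(G_\bu\on)$ equals the vector of residues at $\Xi$ of the coordinates of that single matrix term, against $\lambda$.

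First I would analyze the pole of $\left(d[\theta] - tI\right)\inv$ at $\Xi$. Writing $d[\theta] = \theta I + N_\theta$ with $N_\theta$ strictly upper triangular and nilpotent of order $n$, we have
\[
\left(d[\theta] - tI\right)\inv = \frac{1}{\theta - t}\left(I - \frac{1}{\theta-t}N_\theta\right)\inv
= \sum_{s=0}^{n-1} \frac{N_\theta^{\,s}}{(\theta - t)^{s+1}},
\]
so each coordinate has a pole at $\Xi$ of order up to $n$, coming from the zero of $\theta - t$ at $\Xi$. The main technical step is therefore to extract the residue of a product $(\theta-t)^{-(s+1)}\cdot(\text{regular matrix})\cdot\bu$ against $\lambda$. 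Here I would use that $t - \theta$ is a uniformizer at $\Xi$ (it has a simple zero there) and that $\lambda = dt/(2y+c_1t+c_3)$ is regular and nonvanishing at $\Xi$; expanding $\left(d[\eta] + (y+c_1t+c_3)I\right)$ and $1/(2y+c_1t+c_3)$ in powers of $(t-\theta)$ and matching, the residue picks out exactly the coefficient of $(t-\theta)^{-1}$. The crucial cancellation is that the combination $\left(d[\eta] + (y+c_1t+c_3)I\right)$, when multiplied by $\left(d[\theta]-tI\right)\inv$, has its higher-order-pole contributions killed: by Lemma \ref{L:Coordregular} (applied with the sign of $y$ flipped, i.e. to $\left(d[\eta]-yI\right)\left(d[\theta]-tI\right)\inv$ one sees regularity at $-\Xi$, but the relevant fact here is that $\left(d[\theta]-tI\right)\inv\left(d[\eta]+(y+c_1t+c_3)I\right)$ has a \emph{simple} pole at $\Xi$ only) one reduces to the case $s=0$, where
\[
\Res_\Xi\!\left(\frac{1}{\theta-t}\left(d[\eta]+(y+c_1t+c_3)I\right)\bu\,\lambda\right)
= -\left(d[\eta]+(y+c_1t+c_3)I\right)\bu\,\Big|_\Xi \cdot \Res_\Xi\!\left(\frac{\lambda}{t-\theta}\right),
\]
and $\Res_\Xi(\lambda/(t-\theta)) = 1/(2\eta+c_1\theta+c_3)$ since $t-\theta$ is a uniformizer with $d(t-\theta) = dt$. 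Evaluating $d[\eta]+(y+c_1t+c_3)I$ at $\Xi=(\theta,\eta)$ gives $d[\eta] + (2\eta+c_1\theta+c_3)I = (\eta I + N_\eta) + (2\eta+c_1\theta+c_3)I$; the overall constant is then $-\left(\eta I + N_\eta + (2\eta+c_1\theta+c_3)I\right)\bu / (2\eta+c_1\theta+c_3)$, and I would need the identity (coming from $d[\eta]$ acting on the $h_i$-basis, cf. Lemma \ref{L:basisduality} and Proposition \ref{P:definingequations}) that collapses this precisely to $-\bu$.

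I expect the main obstacle to be exactly this last collapse: showing that, after accounting for the higher-order poles from $N_\theta^{\,s}/(\theta-t)^{s+1}$ for $s\geq 1$, everything telescopes so that the net residue is simply $-\bu$ and not some more complicated linear combination involving $N_\theta$, $N_\eta$, and the $a_i$. The cleanest route is probably not to compute term by term but to use the functional equations: since $G_\bu\on$ satisfies $D_t(G_\bu\on) = \Exp_\rho\on(d[\eta]\bu) + (y+c_1t+c_3)\Exp_\rho\on(\bu)$ by Lemma \ref{L:DtofGu}(a), and $D_t = \rho\on_t - t = d[\theta] - tI + E_\theta\tau$, one can apply $\RES_\Xi$ to both sides. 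The right-hand side, being a power series in $t$ with matrix coefficients times entire functions, is regular at $\Xi$ once one checks $\Exp_\rho\on$ converges there, so $\RES_\Xi$ of it vanishes; meanwhile $\RES_\Xi\!\left((d[\theta]-tI)G_\bu\on\right)$ can be related to $\RES_\Xi(G_\bu\on)$ by pulling the regular-at-$\Xi$ factor $(d[\theta]-tI)$ through — but $(d[\theta]-tI)$ vanishes in its scalar part at $\Xi$, so one must argue that $(d[\theta]-tI)G_\bu\on$ has residue $(d[\theta]-tI)\big|_\Xi$ applied only to the \emph{next} Laurent coefficient, which is $N_\theta$-nilpotent and forces a recursion that bottoms out at $-\bu$. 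Combining this with the twisting relation $G_\bu\on\twist$ having poles shifted to $\Xi\twist$ (so $\RES_\Xi$ of the $\tau$-part contributes nothing) should pin down $\RES_\Xi(G_\bu\on) = -\bu$ cleanly.
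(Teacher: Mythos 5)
Your setup is right — only the $j=0$ term of \eqref{Gugeoseries} can contribute at $\Xi$, and the pole structure of $\left(d[\theta]-tI\right)\inv=\sum_{s\ge 0}(-1)^sN_\theta^{\,s}/(\theta-t)^{s+1}$ is the correct object to study — but the central cancellation is exactly the step you leave open, and your attempted shortcut for it does not work. The assertion that $\left(d[\theta]-tI\right)\inv\bigl(d[\eta]+(y+c_1t+c_3)I\bigr)$ ``has a simple pole at $\Xi$ only'' is not what Lemma \ref{L:Coordregular} gives and is false as stated: writing $d[\eta]+(y+c_1t+c_3)I=(d[\eta]-yI)+(2y+c_1t+c_3)I$, the first piece is regular at $\Xi$ by the lemma, but the second piece is a nonvanishing scalar times $\left(d[\theta]-tI\right)\inv$, whose off-diagonal entries have poles of order up to $n$. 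Consequently your reduction ``to the case $s=0$'' is unjustified, and indeed your $s=0$ evaluation does not return $-\bu$ (it leaves an uncancelled $N_\eta$ term, and the scalar part is miscomputed: $\bigl(d[\eta]+(y+c_1t+c_3)I\bigr)\big|_\Xi=(2\eta+c_1\theta+c_3)I+N_\eta$, not $(3\eta+c_1\theta+c_3)I+N_\eta$). The missing idea — which is the heart of the paper's proof — is that after multiplying by $\lambda$ the scalar piece cancels the denominator of $\lambda$ exactly, so that modulo differentials regular at $\Xi$ one has $\bigl(d[\eta]+(y+c_1t+c_3)I\bigr)\left(d[\theta]-tI\right)\inv\lambda=\left(d[\theta]-tI\right)\inv dt$ (the paper derives this identity using Lemma \ref{L:Coordregular} and the commutativity of $d[\eta]$ and $d[\theta]$); one then observes that every off-diagonal entry of $\left(d[\theta]-tI\right)\inv$ is a combination of $(\theta-t)^{-k}$ with $k\ge 2$ only, hence has no residue, so only the diagonal entries $1/(\theta-t)$ survive and give $-\bu$.

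Your fallback via the functional equation does not close this gap either. Applying $\RES_\Xi$ to Lemma \ref{L:DtofGu}(a) only yields $\RES_\Xi\bigl((d[\theta]-tI)G_\bu\on\bigr)=0$ (the right-hand side and the twisted term are regular at $\Xi$), which is a homogeneous linear relation between the residue and the higher-order Laurent coefficients of $G_\bu\on$ at $\Xi$; the vector $\bu$ never enters such relations, so no recursion built from them can ``bottom out at $-\bu$'' unless you independently compute at least one Laurent coefficient from the explicit series — which returns you to the computation above. So as written the proposal has a genuine gap at precisely the point you flag as the main obstacle.
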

\begin{proof}
Again, for ease of exposition in this proof we will assume that the elliptic curve $E$ has the simplified Weierstrass equation $E:\, y^2 = t^3 + At + B$ for $A,B\in \F_q$.  The proposition holds for the more general Weierstrass equation \eqref{ecequation} and we leave the extra details to the reader.  Equation \eqref{Gugeoseries} gives
\[G_\bu\on=  \sum_{j=0}^\infty Q_j\left (d[\theta]\twistk{j} -   tI \right )\inv\left (d[\eta]\twistk{j} +y I\right )\bu\twistk{j},\]
so when we calculate $\RES_\Xi(G_\bu\on\lambda)$, we find that the only possible contributions to the residues come from the $j=0$ term, since $\left (d[\theta]\twistk{j} -   tI \right )\inv$ is regular at $\Xi$ in each coordinate for $j\geq 1$.  In particular, we find that
\[\RES_\Xi(G_\bu\on) = \RES_\Xi\left (\left (d[\eta] + yI\right )\left (d[\theta] -   tI \right )\inv\bu\right  ),\]
and further that
\begin{align}\label{RESequation1}
\begin{split}
\left (d[\eta] + yI\right )\left (d[\theta] -   tI \right )\inv\lambda &=\left (d[\eta] + yI\right )\left (d[\theta] -   tI \right )\inv\cdot \frac{dt}{2y}\\
&=\frac{1}{2}\left (2d[\eta] -( d[\eta] - y) \right )(d[\theta] -   tI  )\inv \left (\frac{1}{y} - d[\eta]\inv + d[\eta]\inv \right )dt\\
&=\frac{1}{2}\left (2d[\eta] -( d[\eta] - y) \right )(d[\theta] -   tI  )\inv  \left (\frac{d[\eta]}{y}\inv(d[\eta]-yI) + d[\eta]\inv \right )dt\\
\end{split}
\end{align}
After multiplying out the factors in the last line of \eqref{RESequation1}, using Lemma \ref{L:Coordregular} we find that the only term whose coordinates have poles at $\Xi$ is $(d[\theta]-t)\inv$.  Thus we see that
\[\left (d[\eta] + yI\right )\left (d[\theta] -   tI \right )\inv\lambda = (d[\theta]-t)\inv dt + \mathbf{r}(t,y)dt,\]
where $\mathbf r(t,y)\in H(t,y)^n$ is some function which is regular at $\Xi$ in each coordinate.  Recall the definition of the matrix
\[d[\theta] -   tI = 
\left (\begin{matrix}
(\theta - t) & a_1 & 1 & \hdots & 0\\
0 & (\theta - t) & a_2 & \hdots & 0 \\
0 & 0 & (\theta - t) & \hdots & 0\\
\vdots & \vdots &  \vdots & \ddots & \vdots \\
0 & 0 & 0 & \hdots & (\theta - t)
\end{matrix}\right ),\]
where the constants $a_i\in H$ are from Proposition \ref{P:definingequations}.  Because the matrix is upper triangular, we see immediately that the inverse matrix has the form
\[\left (d[\theta] -   tI \right )\inv =
\left (\begin{matrix}
\frac{1}{\theta - t} & * & * & \hdots & *\\
0 & \frac{1}{\theta - t} & * & \hdots & * \\
0 & 0 & \frac{1}{\theta - t} & \hdots & *\\
\vdots & \vdots & \vdots & \ddots & \vdots \\
0 & 0 & 0 & \hdots & \frac{1}{\theta - t}
\end{matrix}\right ),\]
where each off diagonal entry denoted by $*$ is a sum of the form
\[\sum_{k=k_0}^{n}\frac{d_k}{(t-\theta)^k}\]
for $k_0\geq 0$ and some (possibly zero) constants $d_k\in H$.  Using the cofactor expansion of the inverse, we find that $k_0\geq 2$ for each coordinate, and thus the off diagonal entries will not contribute to the residue.  Thus, since $t-\theta$ is a uniformizer at $\Xi$, for some functions $r_i(t)\in H(t,y)$ which have no residue at $\Xi$ we find that
\begin{equation}\label{RESfinaleq}
\RES_\Xi(G_\bu\on) = 
\left (\begin{matrix}
\Res_\Xi\left ((\frac{u_1}{\theta-t} + r_1(t))dt\right )\\
\vdots\\
\Res_\Xi\left ((\frac{u_n}{\theta-t} + r_n(t))dt\right )
\end{matrix}\right )
=
-\left (\begin{matrix}
u_1\\
\vdots\\
u_n
\end{matrix}\right ).
\end{equation}

\end{proof}

\begin{proposition}\label{P:CompositionMaps}
The composition of maps
\[\RES_\Xi\circ\, T: \Omega_0\to \C_\infty^n\]
is an injective $\bA$-module homomorphism, where $\bA$ acts on $\Omega_0$ by multiplication and on $\C_\infty^n$ by $\rho\on$, and its image is $\lambda_\rho\on = \ker(\Exp_\rho\on )$.
\end{proposition}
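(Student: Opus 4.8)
Here is how I would approach the proof. Throughout, write $\bc:=\RES_\Xi(T(\omega_\rho^n))$, and recall that $\ker\Exp_\rho\on$ is stable under $\bz\mapsto d[a]\bz$ (because $\Exp_\rho\on(d[a]\bz)=\rho\on_a(\Exp_\rho\on(\bz))$), so the relevant $\bA$-action on $\C_\infty^n$ is the one through $a\mapsto d[a]$, the constant term of $\rho\on_a$.

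\emph{Homomorphism property.} Since $T$ and $\RES_\Xi$ are $\F_q$-linear and $\bA$ is generated over $\F_q$ by $t,y$, it suffices to treat multiplication by $t$ and by $y$. The mechanism is Lemma \ref{L:Operators}: for $h\in\Omega_0$ it gives $\rho\on_t(T(h)) = t\cdot T(h) = T(th)$ (the first equality because $D_t(T(h))=0$, the second because $T$ is equivariant for multiplication by functions), and likewise $\rho\on_y(T(h)) = T(yh)$. Expanding $\rho\on_t(T(h)) = d[\theta]\,T(h) + E_\theta\,T(h)\twist$ and applying $\RES_\Xi$, the first summand contributes $d[\theta]\,\RES_\Xi(T(h))$ by $\C_\infty$-linearity of the residue, while the second contributes nothing: since $h\in\BB$ has a pole of order at most $n$ at $\Xi$ and $f$ vanishes to order $1$ there by \eqref{fdivisor}, every entry of $T(h)\twist = f^n\cdot(h\,g_i\twist)_i$ is regular at $\Xi$. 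Hence $\RES_\Xi(T(th)) = d[\theta]\,\RES_\Xi(T(h))$ and $\RES_\Xi(T(yh)) = d[\eta]\,\RES_\Xi(T(h))$; as $d\colon\bA\to\Mat_n(H)$ is a ring homomorphism, these propagate to $\RES_\Xi(T(ah)) = d[a]\,\RES_\Xi(T(h))$ for all $a\in\bA$.

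\emph{Injectivity.} By Proposition \ref{P:Omegaprop} every $h\in\Omega_0$ is $a\,\omega_\rho^n$ with $a\in\bA$ unique, so by the previous step $\RES_\Xi(T(h)) = d[a]\,\bc$. Now $d[a]=\iota(a)I+N$ with $N$ nilpotent has determinant $\iota(a)^n$, hence is invertible when $a\ne 0$; and $\bc\ne 0$ because its bottom entry $\Res_\Xi(\omega_\rho^n g_n\lambda)$ is the residue of a differential with a \emph{simple} pole at $\Xi$ — indeed $\omega_\rho^n$ has a pole of order exactly $n$ there (as $\omega_\rho\in\TT[y]^\times$ has a simple pole at $\Xi$) and $g_n$ has a zero of order $n-1$ by \eqref{gidivisor} — so it is nonzero. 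Thus $\RES_\Xi(T(h))=0$ forces $a=0$, i.e.\ $h=0$.

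\emph{Image.} I would prove the two inclusions separately. For $\ker\Exp_\rho\on\subseteq\Image(\RES_\Xi\circ T)$: if $\Exp_\rho\on(\bu)=0$ then also $\Exp_\rho\on(d[a]\bu)=\rho\on_a(\Exp_\rho\on(\bu))=0$ for every $a$, so all the Exp-terms on the right of Lemma \ref{L:DtofGu} vanish and $D_t(G_\bu\on)=D_y(G_\bu\on)=0$; Proposition \ref{P:OperatorDecomp} then gives $(G-E_1\tau)(G_\bu\on)=0$, Lemma \ref{L:GE1andOmega} produces $h\in\Omega_0$ with $G_\bu\on=T(h)$, and Proposition \ref{P:Residue} gives $\RES_\Xi(T(h))=-\bu$, so $\bu=\RES_\Xi(T(-h))$ is in the image. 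For the reverse inclusion it suffices, by the homomorphism property and the $d$-stability of $\ker\Exp_\rho\on$, to show $\bc\in\ker\Exp_\rho\on$ (then $\RES_\Xi(T(a\omega_\rho^n))=d[a]\bc\in\ker\Exp_\rho\on$ for all $a$, and together with the first inclusion the image equals $d[\bA]\bc$ with $\bc$ a generator of the period lattice, hence equals $\ker\Exp_\rho\on$). The route I would take is to show that $T(\omega_\rho^n)$ is itself the vector-valued Anderson generating function $G_{\bu_0}\on$ of a fundamental period $\bu_0$, after which Proposition \ref{P:Residue} gives $\bc=-\bu_0$.

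\emph{The main obstacle.} This last identification is the crux. The first inclusion above only yields $G_{\Pi_n}\on = a_1\cdot T(\omega_\rho^n)$ for some $a_1\in\bA$ (with $\Pi_n$ generating the period lattice), and one must rule out $a_1$ being a non-unit — equivalently, one must know that $\bc$ generates, rather than merely lies in, $\ker\Exp_\rho\on$. Resolving this seems to require genuinely using the explicit theory of $\omega_\rho$: matching the twisting relation $(\omega_\rho^n)\twist=f^n\omega_\rho^n$ and the product formula \eqref{omegarhoprod} against the structure of $G_\bu\on$, via the identities $\rho\on_t(E_\bu\on)=E_{d[\theta]\bu}\on$ and their consequences (which also show, for $\bu\in\ker\Exp_\rho\on$, that $t\cdot G_\bu\on=G_{d[\theta]\bu}\on$ and $y\cdot G_\bu\on=G_{d[\eta]\bu}\on$), together with a degree/convergence analysis of the relevant power series. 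Everything else is formal manipulation of $T$, $\RES_\Xi$, and the operators $D_t,D_y,G-E_1\tau$.
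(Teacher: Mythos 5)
Your homomorphism and injectivity arguments are sound, and in places they take a (valid) route different from the paper's: you obtain $\bA$-linearity by a direct residue computation on $\rho\on_t(T(h)) = d[\theta]T(h) + E_\theta T(h)\twist$ (implicitly using that $g_i\twist$ is regular at $\Xi$, which does hold), and injectivity from Proposition \ref{P:Omegaprop} together with the nonvanishing of the simple-pole residue $\Res_\Xi(\omega_\rho^n g_n\lambda)$, whereas the paper deduces both from an identification of $T(h)$ with a vector-valued Anderson generating function. Your proof of $\ker(\Exp_\rho\on)\subseteq \Image(\RES_\Xi\circ T)$ is exactly the paper's. However, the inclusion you leave open --- that the image lies in $\ker(\Exp_\rho\on)$, equivalently that $\RES_\Xi(T(\omega_\rho^n))$ is a period --- is a genuine gap, and it is the heart of the proposition: without it the stated equality of the image with the period lattice is not established.

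The paper closes this gap with an argument you are missing, and it needs neither the product formula for $\omega_\rho$ nor Proposition \ref{P:Omegaprop}: for an arbitrary $h\in\Omega_0$ write $T(h) = \sum_{i\geq 0}\bb_{i+1}t^i + (y+c_1t+c_3)\sum_{i\geq 0}\bc_{i+1}t^i$ with $\bb_i,\bc_i\in\C_\infty^n$. Lemma \ref{L:Operators} (that is, $\rho\on_t(T(h)) = t\,T(h)$ and $\rho\on_y(T(h)) = y\,T(h)$) translates into the recursions $\rho\on_t(\bb_{i+1})=\bb_i$, $\rho\on_t(\bc_{i+1})=\bc_i$, $\rho\on_y(\bc_i)=\bb_i$ with $\bb_0=\bc_0=0$. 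Since $\lvert\bb_i\rvert,\lvert\bc_i\rvert\to 0$, for large $i$ these coefficients lie in the domain of convergence of $\Log_\rho\on$, and $\Pi_n := d[\theta^i]\Log_\rho\on(\bc_i)$ is independent of $i$; then $\Exp_\rho\on(\Pi_n) = \rho\on_{t^i}(\bc_i) = \bc_0 = 0$, while $\bc_i = \Exp_\rho\on(d[\theta^{-i}]\Pi_n)$ and $\bb_i = \Exp_\rho\on(d[\eta]d[\theta^{-i}]\Pi_n)$, so that $T(h) = G_{\Pi_n}\on$ and hence $\RES_\Xi(T(h)) = -\Pi_n\in\ker(\Exp_\rho\on)$ by Proposition \ref{P:Residue}. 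This coefficient-recursion-plus-logarithm device is precisely the identification of $T(\omega_\rho^n)$ with $G_{\bu_0}\on$ for a period $\bu_0$ that you flagged as the crux but did not supply; no degree or convergence analysis of the product expansion of $\omega_\rho$ is required.
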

\begin{proof}
The proof follows similarly to the proof of \cite[Thm. 4.5]{GP16}.  For an arbitrary $h\in \Omega^n$, each coordinate of $T(h)$ is in $\TT[y]$, so we can write
\[T(h) = \sum_{i=0}^\infty \bb_{i+1} t^i + (y + c_1t + c_3) \sum_{i=0}^\infty \bc_{i+1} t^i\]
uniquely for $\bb_i,\bc_i\in \C_\infty^n$.  Then using Lemma \ref{L:Operators}, we observe that
\begin{align*}
\sum_{i=0}^\infty \rho\on_t (\bb_{i+1})t^i + (y+c_1t+c_3)\sum_{i=0}^\infty \rho\on_t (\bc_{i+1})t^i &= \rho\on_t (T(h)) \\
& = tT(h)\\
& = \sum_{i=0}^\infty \bb_{i+1} t^{i+1} + (y + c_1t + c_3) \sum_{i=0}^\infty \bc_{i+1} t^{i+1},
\end{align*}
from which we see that if we set $\bb_0=\bc_0 =0$, then for $i\geq 0$
\begin{equation}\label{rhotcoef}
\rho\on_t (\bb_{i+1}) = \bb_{i}, \quad \rho\on_t (\bc_{i+1}) = \bc_{i}.
\end{equation}
Similarly we find that for $i\geq 0$
\begin{equation}\label{rhoycoef}
\rho\on_y (\bc_{i}) = \bb_i.
\end{equation}
Since $|\bb_i|$, $|\bc_i|\to 0$ as $i\to \infty$, there is some $i_0>0$ such that $\bb_{i+1}$ and $\bc_{i+1}$ both lie within the radius of convergence of $\Log_\rho\on$ for $i>i_0$.  Thus by \eqref{logfunctionalequation} and \eqref{rhotcoef}, for $i>i_0$ we have
\[
  d[\theta^i] \Log_\rho\on(\bb_i) = d[\theta^{i+1}]\Log_\rho\on(\bb_{i+1}), \quad d[\theta^i] \Log_\rho\on(\bc_i) = d[\theta^{i+1}]\Log_\rho\on(\bc_{i+1}),
\]
and we note that these two quantities are independent of $i$.  We set
\[
\Pi_n := d[\theta^i]\Log_\rho\on(\bc_i),
\]
for some $i > i_0$, and note that
\[
d[\eta]\Pi_n = d[\eta] d[\theta^i]\Log_\rho\on(\bc_i) = d[ \theta^i]d[\eta]\Log_\rho\on(\bc_i) = d[\theta^i]\Log_\rho\on(\rho\on_y(\bc_i)) = d[\theta^i]\Log_\rho\on(\bb_i).
\]
Using \eqref{expfunctionalequation} together with the above discussion we see that
\[
\Exp_\rho\on (\Pi_n) = \Exp_\rho\on(d[\theta^i]\Log_\rho\on(\bc_i)) = \rho\on_{t^i} (\bc_i) = \rho\on_t(\bc_1) =\bc_0= 0,
\]
which implies that $\Pi_n \in \lambda_\rho\on$.  Further, we see that
\[
\bb_i = \Exp_\rho\on\left (d[\eta]d[\theta^{-i}] \Pi_n\right ), \quad \bc_i = \Exp_\rho\on\left ( d[\theta^{-i}] \Pi_n\right ),
\]
and thus
\[
T(h) = G_{\Pi_n}\on = E_{d[\eta] \Pi_n}\on + (y+c_1t + c_3)E_{\Pi_n}\on.
\]
By Proposition~\ref{P:Residue}, we see that $\RES_\Xi(T(h)) = -\Pi_n$, and thus $\RES_\Xi(T(\Omega_0)) \subseteq \lambda_\rho\on$.  Since $G_{\Pi_n}\on = G_{\Pi_n'}\on$ if and only if $\Pi_n = \Pi_n'$, the map $\RES_\Xi\circ\, T$ is injective.  Finally, let $\Pi_n'\in \lambda_\rho\on$, so that Lemma \ref{L:DtofGu} shows that
\[D_t(G_{\Pi_n'}\on)=D_y(G_{\Pi_n'}\on) = 0.\]
Thus, using Proposition \ref{P:OperatorDecomp} we find that
\[(G-E_1\tau)(G_{\Pi_n'}\on) = 0,\]
and hence by Lemma \ref{L:GE1andOmega} $G_{\Pi_n'}\on = T(h)$ for some function $h\in \Omega_0$.  Finally, by Proposition \ref{P:Residue}
\[\RES_\Xi(T(h)) =\RES_\Xi(G_{\Pi_n'}\on) = \Pi_n'\]
which shows that $\lambda_\rho\on \subset \RES_\Xi(T(\Omega_0))$.  To see that $\RES_\Xi\circ T$ is an $\bA$-module homomorphism, for $h\in \Omega_0$, using the above discussion we find that
\[\RES_\Xi(T(th)) = \RES_\Xi(tG_{\Pi_n'}\on),\]
for some $\Pi_n'\in \Lambda_\rho\on$ and using analysis similar to that in the proof of Proposition \ref{P:Residue} that
\[\RES_\Xi(tG_{\Pi_n'}\on) = \RES_\Xi((t - d[\theta])G_{\Pi_n'}\on + d[\theta]G_{\Pi_n'}\on) = d[\theta]\RES_\Xi(G_{\Pi_n'}\on)= d[\theta]\RES_\Xi(T(h)).\]
It follows similarly that $\RES_\Xi(T(yh)) = d[\eta]\RES_\Xi(T(h))$, which finishes the proof.
\end{proof}
\begin{theorem}\label{T:Period}
If we denote
\[\Pi_n = -\RES_\Xi(T(\omega_\rho^n)),\]
then $T(\omega_\rho^n) = G_{\Pi_n}\on$ and $\lambda_\rho\on = \{d[a]\Pi_n\mid a\in \bA\}$.  Further, if $\pi_\rho$ is a fundamental period of the (1-dimensional) Drinfeld exponential function $\exp_\rho$ from \eqref{1dimperiod}, then the last coordinate of $\Pi_n \in \C_\infty^n$ is
\[ \frac{g_1(\Xi)}{a_1a_2\dots a_{n-1}}\cdot\pi_\rho^n,\]
where the constants $a_i$ are from Proposition \ref{P:definingequations}.
\end{theorem}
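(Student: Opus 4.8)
The plan is to obtain the two structural assertions about $\Pi_n$ and $\lambda_\rho\on$ directly from the results of \S\ref{S:Operators}--\S\ref{S:Period}, and then to extract the last coordinate by a residue computation at $\Xi$. For the first part, by Proposition~\ref{P:Omegaprop} the module $\Omega_0=\bA\cdot\omega_\rho^n$ is free of rank one over $\bA$, and by Proposition~\ref{P:CompositionMaps} the map $\RES_\Xi\circ T\colon\Omega_0\to\C_\infty^n$ is an injective $\bA$-module homomorphism with image $\lambda_\rho\on$; moreover the closing lines of the proof of that proposition show $\RES_\Xi(T(ah))=d[a]\RES_\Xi(T(h))$ for every $a\in\bA$. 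Applying this to the generator $h=\omega_\rho^n$ and recalling $\Pi_n=-\RES_\Xi(T(\omega_\rho^n))$ gives $\lambda_\rho\on=\{d[a]\RES_\Xi(T(\omega_\rho^n))\mid a\in\bA\}=\{d[a]\Pi_n\mid a\in\bA\}$. The equality $T(\omega_\rho^n)=G_{\Pi_n}\on$ is exactly what the proof of Proposition~\ref{P:CompositionMaps} establishes for an arbitrary $h\in\Omega_0$, namely $T(h)=G_{\Pi'}\on$ with $\Pi'=-\RES_\Xi(T(h))$; specialize to $h=\omega_\rho^n$.

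\emph{The last coordinate.} By \eqref{Tmapdef} the bottom entry of $T(\omega_\rho^n)$ is $\omega_\rho^n g_n$, so the last coordinate of $\Pi_n$ is $-\Res_\Xi(\omega_\rho^n g_n\lambda)$. I would first localize at $\Xi$: the $i=0$ factor $\xi/f$ of the product \eqref{omegarhoprod} has a simple pole at $\Xi$ (as $f$ has a simple zero there by \eqref{fdivisor}) while every factor with $i\geq1$ is a unit at $\Xi$, so $\omega_\rho$ has a simple pole at $\Xi$; by \eqref{gidivisor} with $j=n$, $g_n$ vanishes to order exactly $n-1$ at $\Xi$, and $g_1$ is regular and nonzero there. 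Writing $g_n=g_1\prod_{k=1}^{n-1}(g_{k+1}/g_k)$ and using that each quotient has a simple zero at $\Xi$ by \eqref{gquotientdiv}, every product $\omega_\rho\cdot(g_{k+1}/g_k)$ is regular at $\Xi$, and since $\omega_\rho\lambda$ has a simple pole there,
\[
\Res_\Xi\bigl(\omega_\rho^n g_n\lambda\bigr)=g_1(\Xi)\,\Bigl(\textstyle\prod_{k=1}^{n-1}\bigl(\omega_\rho\tfrac{g_{k+1}}{g_k}\bigr)(\Xi)\Bigr)\,\Res_\Xi(\omega_\rho\lambda).
\]
The $n=1$ case of the theorem, which recovers \cite[\S4]{GP16}, gives $\Res_\Xi(\omega_\rho\lambda)=-\pi_\rho$; the remaining $n-1$ powers of $\pi_\rho$ will be produced by the $n-1$ quotient contributions, each of which contains a hidden factor $\Res_\Xi(\omega_\rho\lambda)$ via the functional equation $\omega_\rho f=\omega_\rho\twist$ (which follows from \eqref{omegarhoprod}).

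\emph{Evaluating the quotient contributions and finishing.} For each $k$ I would compute $\bigl(\omega_\rho\,(g_{k+1}/g_k)\bigr)(\Xi)$ from the explicit forms \eqref{fdef} and \eqref{gquotient}: factoring $\omega_\rho\,(g_{k+1}/g_k)=\bigl((g_{k+1}/g_k)/f\bigr)\cdot(\omega_\rho f)$, the second factor $\omega_\rho f=\omega_\rho\twist$ is regular at $\Xi$ with value determined by $\Res_\Xi(\omega_\rho\lambda)$, while the first is a ratio of leading Laurent coefficients at $\Xi$, which by \eqref{gquotient} is controlled by the linear denominators $\delta_k$ and by the slopes appearing in $f$ and in $g_{k+1}/g_k$. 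Corollary~\ref{C:aivalue} identifies each value $\theta-t([j]V\twist+[n-j]V)$ with $(2\eta+c_1\theta+c_3)/a_j$, and the collinearity of the triples $\{\Xi,V\twist,-V\}$ and $\{\Xi,[k]V\twist+[n-k]V,-([k-1]V\twist+[n-k+1]V)\}$ — both consequences of \eqref{Vequation} — supply the group-law identities that cancel the slope factors, leaving $\bigl(\omega_\rho\,(g_{k+1}/g_k)\bigr)(\Xi)=\pi_\rho/a_k$ up to a fixed sign. Multiplying the $n-1$ factors, reinserting $g_1(\Xi)$ and the leftover $\Res_\Xi(\omega_\rho\lambda)=-\pi_\rho$, and taking the overall minus sign, produces $-\Res_\Xi(\omega_\rho^n g_n\lambda)=g_1(\Xi)\,\pi_\rho^n/(a_1a_2\cdots a_{n-1})$.

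\emph{Where the difficulty lies.} The main obstacle is this last step: the careful residue and leading-coefficient bookkeeping at $\Xi$, together with checking that the accumulated slope-and-denominator data collapses exactly to $1/(a_1\cdots a_{n-1})$ with the correct overall sign — this is where the full strength of Corollary~\ref{C:aivalue} and of \eqref{Vequation} is consumed. A secondary subtlety is that $\omega_\rho$ and the vector Anderson generating functions are a priori defined only on the radius-one polydisk underlying $\TT[y]$, whereas $|t(\Xi)|=|\theta|>1$, so every claim made near $\Xi$ must be read on the meromorphic continuations furnished by Proposition~\ref{P:Euspace} and by the convergent product \eqref{omegarhoprod}.
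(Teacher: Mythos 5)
Your handling of the two structural assertions is exactly the paper's argument: Proposition~\ref{P:Omegaprop} plus Proposition~\ref{P:CompositionMaps} (including the $\bA$-equivariance $\RES_\Xi(T(ah))=d[a]\RES_\Xi(T(h))$ and the identity $T(h)=G_{\Pi_n}\on$ established in that proof), so there is nothing to add there. For the last coordinate you are also doing the same residue computation at $\Xi$ that the paper does --- simple pole of $\omega_\rho$, order-$(n-1)$ vanishing of $g_n$, residue of a regular function against a simple-pole differential --- only organized factor-by-factor, pairing each quotient $g_{k+1}/g_k$ with one copy of $\omega_\rho$, instead of the paper's single extraction of $(t-\theta)^{n-1}$ from $\omega_\rho^n g_n$. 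One remark on mechanism: the evaluation of the quotients does not need Corollary~\ref{C:aivalue}, explicit slopes, or group-law collinearity at all; dividing the first relation of Proposition~\ref{P:definingequations} by $g_{i+1}$ and evaluating at $\Xi$ (where $g_{i+2}/g_{i+1}$ vanishes by \eqref{gquotientdiv}) gives $\bigl((t-\theta)g_i/g_{i+1}\bigr)\big|_\Xi=a_i$ immediately, which is how the paper produces the factor $g_1(\Xi)/(a_1\cdots a_{n-1})$.

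The one genuine shortfall is the per-factor value you assert but do not prove: $\bigl(\omega_\rho\,g_{k+1}/g_k\bigr)(\Xi)=\pi_\rho/a_k$ ``up to a fixed sign.'' Given the identity above, this claim is equivalent to identifying the leading Laurent coefficient $\bigl((t-\theta)\omega_\rho\bigr)(\Xi)$ with $\pi_\rho$, i.e.\ to relating that coefficient to $\pi_\rho=-\Res_\Xi(\omega_\rho\lambda)$ from \cite{GP16}. That is precisely the normalization the paper invokes once, in the single equality $-\Res_\Xi\bigl((t-\theta)^{n-1}\omega_\rho^n\lambda\bigr)=\pi_\rho^n$; in your scheme it is consumed $n-1$ times, so any unresolved sign or stray constant (for instance the factor $2y+c_1t+c_3$ separating the coefficient of $(t-\theta)^{-1}$ from the residue against $\lambda=dt/(2y+c_1t+c_3)$) enters to the $(n-1)$st power and cannot be dismissed as ``a fixed sign'' --- it would change the final formula. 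To close the argument you must compute $\bigl((t-\theta)\omega_\rho\bigr)(\Xi)$ exactly, or, more economically, apply the residue identity only once as the paper does, to $(t-\theta)^{n-1}\omega_\rho^n$, which reduces the whole last-coordinate claim to that single evaluation together with $\bigl((t-\theta)g_i/g_{i+1}\bigr)\big|_\Xi=a_i$.
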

\begin{proof}
The first two statements follow immediately from Propositions \ref{P:Omegaprop} and \ref{P:CompositionMaps}.  Then recall from \cite{GP16} that $ \pi_\rho  = -\!\Res_{\Xi}(\omega_\rho \lambda)$, whereupon the last statement follows by noting that the last coordinate of $-\RES_\Xi(T(\omega_\rho^n))$ equals
\[-\Res_\Xi(\omega_\rho^n g_n\lambda) = -\Res_\Xi\left ((t-\theta)^{n-1}\omega_\rho^n\lambda\right )\cdot \left (\frac{g_n}{(t-\theta)^{n-1}}\bigg |_{\Xi}\right ) = \pi_\rho^n \cdot \left (\frac{g_n}{(t-\theta)^{n-1}}\bigg |_{\Xi}\right ),\]
since $(t-\theta)^{n-1}\omega_\rho^n$ has a simple pole at $\Xi$ and since $g_n/(t-\theta)^{n-1}$ is regular at $\Xi$.  The formula then follows by dividing the first equation of Proposition \ref{P:definingequations} through by $g_{i+1}$ then evaluating at $\Xi$ to get
\[\frac{(t-\theta)g_i}{g_{i+1}}\bigg|_\Xi = a_i.\]
\end{proof}

\section{Example}\label{S:examples}
\begin{example}\label{E:easyexample}
Let $E:y^2 = t^3-t-1$ be defined over $\F_3$.  Then from \cite{Thakur93} we find that
\[f = \frac{y-\eta - \eta(t-\theta)}{t - \theta - 1}.\]
We form the $2$-dimensional Anderson $\bA$-module as outlined in section \S \ref{S:Amodules}, where we recall from \eqref{gidivisor} that
\[\divisor(g_1) = -2(V) + (\infty) + ([2]V),\quad \divisor(g_2) = -2(V) + (\Xi) + (V\twist + V).\]
If we denote $T_{-V}$ as translation by $-V$ on $E$, then we can quickly write down formulas for $g_1$ and $g_2$ by observing that $g_1\circ T_{-V}$ and $g_2\circ T_{-V}$ are both polynomials with relatively simple divisors, from which we calculate that
\[g_1 = \frac{\eta^{2} + \eta y + t - \theta - 1}{\eta t^{2} + \eta t \theta + \eta \theta^{2} + \eta t - \eta \theta + \eta},\]
\[g_2 =  \frac{\eta^{2} t^{2} + \eta^{2} t \theta + \eta^{2} \theta^{2} + \eta^{2} t - \eta^{2} \theta - \eta^{2} + t^{2} + t \theta + \theta^{2} + \eta y - t + \theta}{\eta^{2} t^{2} + \eta^{2} t \theta + \eta^{2} \theta^{2} + \eta^{2} t - \eta^{2} \theta + \eta^{2} + t^{2} + t \theta + \theta^{2} + t - \theta + 1}.\]
Then using Corollary \ref{C:aivalue} we calculate that
\[\rho\on_t =
\left (\begin{matrix}
\theta & \frac{-(\eta^2+1)^2}{\eta^3}  \\
 0 &\theta
\end{matrix}\right )+
\left (\begin{matrix}
1 & 0  \\
\frac{-\eta^3(\eta^4-\eta^2 - 1)}{(\eta^2+1)^3} & 1
\end{matrix}\right )\cdot \tau.\]
We then calculate that the bottom coordinate of $\Pi_2$ from Theorem \ref{T:Period} is
\[\frac{-(\eta^2 + 1)^2}{(\eta^5 - \eta^3 - \eta)} \cdot \pi_\rho^2.\]
\end{example}

\end{document}